\theoremstyle{plain}
\newtheorem{theorem}{Theorem}[section]
\newtheorem{proposition}[theorem]{Proposition}
\newtheorem{lemma}[theorem]{Lemma}
\theoremstyle{definition}
\newtheorem{definition}[theorem]{Definition}
\theoremstyle{remark}
\newtheorem{remark}[theorem]{Remark}
\DeclareMathOperator{\divo}{div}
\DeclareMathOperator{\Div}{Div}
\DeclareMathOperator{\Id}{Id}
\DeclareMathOperator{\sym}{sym}
\DeclareMathOperator{\spano}{span}
\DeclareMathOperator{\supp}{supp}
\newcommand{\G}{\mathcal{G}}
\def\<{\langle}
\def\>{\rangle}
\newcommand{\R}{\mathbb{R}}
\newcommand{\Rc}{\ensuremath{\mathcal{R}}}
\newcommand{\La}{\Lambda}
\def\HH{\mathcal{H}^{1}}
\def\HHc{\mathcal{H}^{c}}
\def\Ladef{\La^{\rm def}}
\def\Rdef{R^{\rm def}}
\def\CC{{\bf C}}
\newcommand{\Z}{\mathbb{Z}}
\newcommand{\C}{\mathbb{C}}
\newcommand{\N}{\mathbb{N}}
\newcommand{\eps}{\varepsilon}
\newcommand{\abs}[1]{\lvert #1 \rvert_0}
\begin{document}

\begin{center}
\begin{Large}
Asymptotic Expansion of the Elastic Far-Field of a Crystalline Defect\footnote{JB and CO are supported by EPSRC Grant EP/R043612/1.}
\end{Large}
\\[0.5cm]
\begin{large}
Julian Braun\footnote{Mathematics Institute, University of Warwick, Coventry, CV4 7AL, UK.}\\
Thomas Hudson\footnotemark[2]\\
Christoph Ortner\footnote{Department of Mathematics, University of British Columbia, 1984 Mathematics Road, Vancouver, BC, V6T 1Z2, Canada.}
\end{large}
\\[0.5cm]
\today
\\[1cm]
\end{center}

\begin{abstract}
Lattice defects in crystalline materials create long-range elastic fields which can be modelled on the atomistic scale using an infinite system of discrete nonlinear force balance equations. Starting with these equations, this work rigorously derives a novel far-field expansion of these fields: The expansion is computable and is expressed as a sum of continuum correctors and discrete multipole terms which decay with increasing algebraic rate as the order of the expansion increases. Truncating the expansion leaves a remainder describing the defect core structure, which is localised in the sense that it decays with an algebraic rate corresponding to the order at which the truncation occurred.
\end{abstract}

{\bf \footnotesize Keywords:} {\sl \footnotesize crystalline defects, asymptotic expansion, screw dislocation, point defect, elastic far-field, multiscale}

{\bf \footnotesize 2020 MSC:} {\sl \footnotesize 74G10, 70C20, 74E15, 41A58, 74G15} 



\section{Introduction}
The field of continuum solid mechanics has been highly successful in providing robust predictions of material behaviour at a wide range of length-scales. 
In crystalline materials in particular, it is recognised that the predictions made by the equations of linear elasticity are valid with tolerable errors even when resolving features such as defects whose characteristic size is close to that of the interatomic spacing. 
This said, when considering processes which involve the genuine thermodynamic, electronic and chemical properties of such defects, the fundamental discreteness of matter becomes crucial and no single continuum model can be sufficient to completely capture the fine detail of a material's behaviour at this scale. 
Moreover, it is exactly these fine details which determine phenomena such as a material's yield strength and behaviour under cyclic loading, both of which are crucial to understand for engineering applications. 

As a result, a range of theoretical techniques have been developed over the last 60 years which seek to predict and compute defect behaviour in crystals, connecting discrete and continuum models of these materials. 
Broadly, these approaches can be divided into two categories, namely \emph{concurrent} and \emph{sequential} modelling strategies. Models in the former class combine discrete and continuum models into a single system which can then be numerically solved simultaneously, while those in the latter class generally involve iteration over separate models acting at different scales. 
In particular, the last 25 years has seen a great deal of research activity focused on concurrent strategies, with one of the most significant developments in this area being the quasicontinuum method \cite{TOP96} and many variants thereof \cite{LO13}.
In contrast, the present work revisits sequential strategies for accurately modelling defects, but with a new perspective that recent progress in the study of multi-scale models has enabled, and our results on the structure of crystalline defects have direct consequences for the design and evaluation of more general models, including both purely atomistic and concurrently coupled models. 

Starting from a discrete energy for a material defined on an infinite domain~\cite{EOS2016}, we develop a hierarchy of linear continuum PDE systems which can be efficiently derived and solved numerically. 
The solutions to these PDE systems form a sequence of smooth \emph{predictors} which describe the far-field behaviour of the lattice strain around a defect to within arbitrary accuracy, and thus provide increasingly accurate boundary conditions to be used on the discrete model when confined to a finite domain. 
The key idea behind our approach is to exploit the knowledge that the variation in the strain field generally decays smoothly away from the core of any localised defect, and hence the far-field behaviour is more and more accurately predicted by the solutions of the continuum PDE models we define. 
These far-field approximate solutions are coupled with the properties of the discrete defect core, encapsulated by the spatial moments of the acting forces and expressed as a multipole expansion. The relative simplicity of these moments provides an elegant, computable way to transfer information from the nonlinear discrete problem to the continuum hierarchy. The coupling moreover is ``weak'' in the sense that a term in the continuum hierarchy only requires information on the multipole terms of strictly lower order. Indeed all terms are defined and are computable in sequence without concurrent coupling.

Our approach has connections with classical approaches to modelling defects in continuum linear elasticity using the defect dipole tensor \cite{E56,NH63} (also known as the elastic dipole tensor or the double force tensor) and to Sinclair's work on atomistic models of fracture \cite{SL72,S75}. 
More recent related work includes that of Trinkle and coworkers, where lattice Green's functions have been used to improve the accuracy of defect computations \cite{T08,TT16}, along with mathematical developments in our understanding of the regularity of discrete strain fields advanced by the authors and coworkers \cite{HO2012,EOS2016,braun16static,BBO19}. In particular, \cite{BBO19} explores some of the initial ideas of our mathematical strategy in a simplified setting. The approach presented here unifies many of the ideas involved in these previous works into a single framework and expands them systematically to higher orders.

More generally, the powerful structural results we present here serve as a useful tool in any discussion of crystalline defects where high accuracy is required. In particular, we will outline how our results may be used as a foundation for a rigorous numerical analysis of defect algorithms and also provide a path to systematically improve their accuracy.

\subsection{Methodology}
Our starting point is to consider a total energy $\mathcal{E}(u)$ for displacements $u$ of an infinite lattice $\La$ of atoms. We will make the mild assumption that the total energy of a displacement is expressed as a sum of \emph{site energies}, i.e. contributions to the total energy arising from the environment of each atom. Under the assumption of frame indifference, this energy (and the site energies which make up the total energy) must depend only upon the relative displacements between atoms. 

An equilibrium displacement $\bar{u}$ of the energy $\mathcal{E}$ satisfies the force balance equation
\begin{equation} \label{eq:forceequilibirum}
	\delta \mathcal{E}(\bar{u}) = 0,
\end{equation}
and it is this infinite system of discrete equations which we study. Since equilibrium displacements $\bar{u}$ exhibit decay properties away from the core of point defects and dislocations \cite{EOS2016}, we can develop these equations around the state of zero displacement to derive approximate equations for the equilibrium displacement $\bar{u}$ in the far field. This comes in two steps:
\begin{itemize}
\item As a first step, we can expand the site potential around zero to obtain linear and nonlinear lattice operators, which still depend on finite differences (atomic bonds); and
\item As a second step, we can use an expansion for the displacement itself to replace finite differences with a gradient and higher derivatives, obtaining continuum PDE approximations to the discrete lattice equations.
\end{itemize}
Note already, that the latter Taylor expansion requires sufficiently smooth continuum displacements and cannot be applied to the discrete $\bar{u}$ itself, so some care needs to be taken when applying this strategy. The PDE approximations come in the form of a hierarchy of corrector equations. 
Crucially, each one of these corrector equations has the same form: the continuum linear elasticity (CLE) equation must be solved but with different right-hand sides (forces) that depend on previous terms in the expansion. 
Such hierarchies of corrector equations have been explored by other authors before, e.g., see \cite{emingstatic} for a formal hierarchy of similar corrector equations in the defect-free setting. In the present work, we take great care to define all terms rigorously, ensure sufficient regularity, and provide sharp estimates for all resulting error terms.

We then combine these continuum correctors with a multipole series which can be obtained from the moments of linearised residual forces. Overall, this enables us to obtain far-field approximations to arbitrarily high order for $\bar{u}$, which is characterised by a discrete remainder whose locality (decay) is precisely controlled.

While our general approach and many of our theoretical results are generic, some significant technical and conceptual challenges come into play when applying the results to specific defects. Specifically, questions regarding the geometry and the relation of a defect state to a reference lattice, as well as the precise properties of the lattice Green's function after adjusting for the geometry. These challenges can sometimes be overcome adhoc in leading order (as in \cite{EOS2016} for edge dislocations) but require a more detailed understanding for higher orders. Our main focus here is the general methodology. We will therefore restrict ourselves to two defect types that are geometrically relatively simple. Namely, general point defects and (straight) screw dislocations.

\subsection{Outline}
 The paper is organised as follows: In \S~\ref{sec:genresults}, we develop our general theory independent of specific defects. We present our main theoretical results concerning the decay of displacement fields and their approximation by multipole expansions for linearised lattice models in \S~\ref{sec:results:general} and then outline the derivation of the continuum corrector PDEs in \S~\ref{sec:contdev}. 
 
In \S~\ref{sec:applications}, we apply our methodology to our nonlinear atomistic models of point defects in Theorem \ref{thm:pointdef} and screw dislocations in Theorem \ref{thm:screw}, and explain the implications for the convergence of numerical methods which exploit the result. \S~\ref{sec:conclusions} presents our conclusions, and discusses the outlook for extending and applying these results in the future.

The proofs of our decay estimates for the linearised models are then provided in \S~\ref{sec:proofs-decay}, a full discussion of the lattice Green's function in  \S~\ref{sec:greensfunctions}, and the proofs of Theorem \ref{thm:pointdef} and \ref{thm:screw} and in \S~\ref{sec:prooffarfieldexpansion}.

\section{General Results} \label{sec:genresults}

\subsection{Models and Notation} \label{sec:results:model}

\paragraph{The Atomistic Energy and General Notation.}
Our results are concerned with modelling of crystalline defects, i.e. local regions of non-uniform atom arrangements embedded in a homogeneous host crystal. In this section we will start with the homogeneous setting itself. In \S~\ref{sec:applications} we will then look at the description of defect configurations and discuss how the homogeneous results can be applied there.

The homogeneous crystal is described by a Bravais lattice $\La := A \Z^d$ as the reference, where $d \geq 2$ and $A \in \R^{d \times d}$ is non-singular, and displacements $u \colon \La \to \R^N$, where we allow $N \neq d$ in order to model a range of scenarios (for example, in some models for pure screw dislocations, we have $d = 2$, $N = 3$, while in anti-plane shear $d = 2$, $N = 1$).

We denote discrete differences by $D_\rho u (\ell) := u(\ell + \rho) - u(\ell)$ for $\ell, \rho \in \La$. Later on we will also look at higher discrete differences which we denote by $D_{\boldsymbol{\rho}} = D_{\rho_1} ... D_{\rho_j}$ for a $\boldsymbol{\rho} = (\rho_1, ..., \rho_j) \in \La^j$.

Next we look at an interaction neighbourhood $\Rc \subset \La \backslash \{ 0\}$. We assume throughout that $\Rc$ is {\em finite}, $\Rc= -\Rc$, and that it spans the lattice $\spano_\Z \Rc= \La$. Based on $\Rc$ we define the discrete difference stencil $Du(\ell) := D_{\Rc} u(\ell) := (D_\rho u (\ell))_{\rho \in \Rc}$. Again, later we will consider higher discrete differences and in particular apply $D$ $k$ times for which we use the simple notation $D^k u = D \dots D u$. With the discrete difference stencil we can formally write down the energy of $u$ as
\[\mathcal{E}(u) = \sum_{\ell \in \La} V(Du(\ell)),\]
where $V \colon \R^{N \times \Rc} \to \R$ is the site energy. We will assume throughout that $V \in C^{K}(\R^{N \times\Rc})$ for some $K$ and satisfies the natural and very mild symmetry assumption 
\begin{equation} \label{eq:pointsymmetry}
V(A) = V((-A_{-\rho})_{\rho \in \Rc}).
\end{equation}

 Note that this is only a formal definition of the energy as the sum might not converge. All these quantities might also need to be adjusted to inhomogeneous generalisations $\mathcal{E}^{\rm def}$, $\Ladef$, $\Rc_\ell$, and $V_\ell$ to allow for a desired defect structure. Both of these aspects will be discussed in detail in \S~\ref{sec:applications}. In particular, we will make $\mathcal{E}$ precise and establish differentiability properties for any $u$ in the discrete energy space $\HH$ which is defined by
\[
  \HH=\HH(\La) = 
  \big\{ 
    u : \La \to \R^N \,|\, \|u\|_{\HH}:=\|Du\|_{\ell^2} < \infty
  \big\}.
\]
For future reference we also define the dense subspace $\HHc \subset \HH$ 
of displacements with compact support, 
\[
    \HHc = \HHc(\La) = 
    \big\{ 
      u : \La \to \R^N \,|\, {\rm supp}(Du) \text{~bounded~}
    \big\}.
\]
Variations of $\mathcal{E}$ can then be written as
\[\delta^k \mathcal{E} (u)[v_1, \dots, v_k] = \sum_{\ell \in \La} \nabla^k V(Du)[Dv_1, \dots, Dv_k].\]
More generally, we use the notation $T[a_1, \dots, a_k]$ for a multi-linear operator $T$. For symmetric operators we will shorten the notation further and write $T[a]^k:= T[a,\dots,a]$ if $a_1=\dots = a_k = a$ or $T[a]^m[b]^{(k-m)}$ if $m$ inputs are $a$ and $k-m$ are $b$. We denote the Hessian at zero by
\[H[u,v] = \delta^2 \mathcal{E} (0)[u,v]  = \sum_{\ell \in \La} \nabla^2 V(0)[Du,Dv]. \]

It will be convenient to interpret these objects as linear functionals, belonging to $(\HH)^\ast$,  acting on the last test function. We will often use a point wise representation based on the $\ell^2$ scalar product. For example,
\[
\delta^k \mathcal{E} (u)[v_1, \dots, v_{k-1}](\ell) := -\Div \big( \nabla^k V(Du)[Dv_1, \dots, Dv_{k-1}] \big),\]
and specifically
\begin{equation} \label{eq:Hdefinition}
 H[u](\ell) := -\Div \big( \nabla^2 V(0)[Du]\big),
\end{equation}
where $\Div A = -\sum_{\rho \in \Rc} D_{-\rho} A_{\cdot \rho}$ is the discrete divergence for a matrix field $A \colon \La \to \R^{N\times \Rc}$.

In this notation we can also write down the force equilibrium equations $\delta \mathcal{E}(u)=0$ in the pointwise form
\[0= \delta \mathcal{E}(u)(\ell) = -\Div \big(  \nabla V(Du(\ell))\big). \]

\paragraph{Lattice Stability and The Green's Function}\

We assume throughout that the Hamiltonian $H=\delta^2 \mathcal{E}(0)$ is {\em stable} and will equivalently call the lattice stable (see \cite{HO2012}), which by definition is the case if and only if there exists a $c_0 > 0$ such that  
\begin{equation} \label{eq:results:latticestab}
    H[u, u] \geq c_0 \| u \|_{\HH}^2 \qquad 
    \forall u \in \HHc.
\end{equation}
For stable operators $H$ there exists a {\em lattice Green's function} $\G : \La \to \R^{N \times N}$ such that $H[\G e_k](\ell) = e_k \delta_{\ell,0}$ for all $1 \leq k \leq N$ and such that
\begin{equation} \label{eq:decay-Gr}
    | D^j \G(\ell) | \lesssim \abs{\ell}^{-d-j+2} 
    \qquad \text{for } \ell \in \La,\  j \geq 1,
\end{equation}
see \cite{EOS2016}. Here we used the notation $\abs{\ell} := \lvert \ell \rvert + 2$ to write decay rates in the discrete setting in a more compact form which we will do throughout. We will also often just write $\G_k := \G e_k$.

\paragraph{The Cauchy-Born Continuum Model}\
Our atomistic lattice model naturally gives rise to corresponding continuum model based on the Cauchy-Born rule. In the continuum setting, one has an energy of the form
\[\mathcal{E}^{\rm C}(u) = \int_{\R^d} W(\nabla u)\,dx\]
for a displacement $u \colon \R^d \to \R^N$. The energy density $W$ is given by the Cauchy-Born rule
\begin{equation} \label{eq:defn_W_cb}
    W(M) := \frac{1}{c_{\rm vol}} V((M\rho)_{\rho \in \Rc})
\end{equation}
for any $M \in \R^{N \times d}$, where $c_{\rm vol} = \lvert \det A \rvert > 0$ is the volume of a lattice cell.

We will later see in more detail the usefulness and limitations of the nonlinear continuum model for defect problems. We will also make heavy use of the linearised continuum problem for our corrector equations. These are given through the continuum Hamiltonian $H^{\rm C}= \delta^2 \mathcal{E}^{\rm C}(0)$. In our pointwise notation the equilibrium equations is
\[0=H^{\rm C}[u](x)\]
or
\begin{equation} \label{eq:CLE}
0= -\divo \big( \mathbb{C}[\nabla u(x)] \big),
\end{equation} 
where $\mathbb{C} = \nabla^2 W(0)$. These are the standard continuum linear elasticity (CLE) equations.

The lattice stability \eqref{eq:results:latticestab} in particular also implies the Legendre-Hadamard stability of $\mathbb{C}$ (see \cite{HO2012,braun16static}), so that \eqref{eq:CLE} is elliptic and allows for a continuum Green's function or fundamental solution $G^{\rm C} \colon \R^d \backslash \{0\} \to \R^{N \times N}$ that solves
\[H^{\rm C}[G^{\rm C} e_k] = e_k \delta_0\]
in the distributional sense for $1 \leq k \leq N$.

\paragraph{Notation for Tensors}
To work with various higher order tensor products throughout, we establish a precise but compact notation: Given a $k$-tuple of vectors in $\R^d$, $\boldsymbol{\sigma}=(\sigma^{(1)}, \dots, \sigma^{(k)} ) \in (\R^d)^k$ we denote their $k$-fold tensor product \[\boldsymbol{\sigma}^\otimes := \bigotimes_{m=1}^k \sigma^{(m)} := \sigma^{(1)} \otimes \cdots \otimes \sigma^{(k)}.\]
The vector space spanned by these tensor products is denoted $(\R^d)^{\otimes k}$, and it is easy to see this space is isomorphic to $\R^{d^k}$. We also write
\[v^{\otimes k} := v \otimes ... \otimes v \in (\R^d)^{\otimes k}\] when considering the $k$-fold tensor product of a single vector $v \in \R^d$.

Let $S_k$ denote the usual symmetric group of all permutations which act on the integers $\{1,\ldots,k\}$. This action can be extended to $k$-tuples and tensor products by defining \[
  \pi(\boldsymbol{\sigma}) := (\sigma^{(\pi(1))}, \dots, \sigma^{(\pi(k))} )\quad\text{and}\quad\pi(\boldsymbol{\sigma}^\otimes):=\sigma^{(\pi(1))}\otimes\cdots\otimes \sigma^{(\pi(k))}\]
for any $\pi \in S_k$ and $\boldsymbol{\sigma} \in (\R^d)^k$. For any $\boldsymbol{\sigma}\in(\R^d)^k$, we define the symmetric tensor product by \[\boldsymbol{\sigma}^\odot:=  \sigma^{(1)} \odot \cdots \odot \sigma^{(k)} := \sym \boldsymbol{\sigma}^\otimes := \frac{1}{k!} \sum_{\pi \in S_k} \pi(\boldsymbol{\sigma})^\otimes. \]
The space spanned by these symmetric tensors is then denoted by $(\R^d)^{\odot k}$, and is a vector subspace of $(\R^d)^{\otimes k}$.

The natural scalar product on $(\R^d)^{\otimes k}$ and $(\R^d)^{\odot k}$ is denoted by $A : B$ for $A,B \in (\R^d)^{\otimes k}$ and, as usual, is defined to be the linear extension of
 \[\boldsymbol{\sigma}^\otimes : \boldsymbol{\rho}^\otimes := \prod_{m=1}^k \sigma^{(m)} \cdot \rho^{(m)} .\]

In particular, for $u \colon \La \to \R^N$ we have $D^k u(\ell) \in \R^N \otimes (\R^{\Rc})^{\otimes k}$. For a second tensor $\CC \in (\R^{\Rc})^{\otimes k}$ given specifically as a sum $\CC = \sum_{\boldsymbol{\rho} \in \Rc^{k}} \CC_{\boldsymbol{\rho}} \boldsymbol{\rho}^{\otimes}$ we can then write $\CC \colon D^k u(\ell) = \sum_{\boldsymbol{\rho}} \CC_{\boldsymbol{\rho}} D_{\boldsymbol{\rho}} u(\ell) \in \R^N$.

\subsection{General Results for the Linearised Equation}
\label{sec:results:general}
At the most fundamental level, our results concern the characterisation of the far-field behaviour of lattice displacements $u : \La \to \R^N$ that are close to equilibrium in the far-field. More precisely, given a stable Hamiltonian $H$ as introduced in the previous section we will characterise the decay of a general lattice displacement $u$ provided that the (linearised) residual forces $f(\ell) := H[u](\ell)$ decay sufficiently rapidly as $|\ell| \to \infty$. In \S~\ref{sec:contdev} we will then show how to use these results for the linearised operator to obtain characterisations of the far-field behaviour of equilibrium displacements in our full nonlinear interaction model.

If $\ell \mapsto H[u](\ell) \otimes \ell^{\otimes j} \in \ell^1(\La)$ we define the $j$-th moment 
\begin{equation} \label{eq:results:defn_Ij}
  \mathcal{I}_j[u] = \sum_{\ell \in \La} H [u](\ell) \otimes \ell^{\otimes j}.
\end{equation}

With this definition we have the following result.

\begin{theorem} \label{thm:structure}
Assume $\lvert H [u](\ell) \rvert \lesssim \abs{\ell}^{-d-p} \log^\alpha \abs{\ell}$ with $p, \alpha \in \N_0$, as well as $\mathcal{I}_i=0$ for $i=0, \dots, p-1$. Then, for $j = 1, 2$,
\begin{equation} 
    \label{eq:thm_structure_mainresult}
   \lvert D^j u(\ell) \rvert \lesssim \abs{\ell}^{2-d-p-j} \log^{\alpha+1} \abs{\ell} 
\end{equation}

If $j \geq 3$, then equation \eqref{eq:thm_structure_mainresult} is still true under the additional assumption that $\lvert D^m H [u](\ell) \rvert \lesssim \abs{\ell}^{-d-p-m} \log^\alpha \abs{\ell}$ for $m=1,...,j-2$.
\end{theorem}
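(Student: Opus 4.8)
The plan is to represent $u$ through the lattice Green's function $\G$ and then estimate the resulting discrete convolution, using the vanishing moments $\mathcal{I}_i[u]=0$ ($i<p$) to cancel the slowly decaying leading contributions. Since $H$ is stable and $u$ has finite energy, $u$ is determined up to an additive constant by its residual $f:=H[u]$, and applying differences kills that constant: differences commute through the convolution, so that $D^j u = D^j\G * f$, i.e. $D^j u(\ell)=\sum_{m\in\La}D^j\G(\ell-m)\,f(m)$. I would first record this identity, noting that $w:=u-\G*f$ solves $H[w]=0$ and, carrying enough decay to lie in $\HH$, must have $Dw=0$ by stability; absolute convergence of the convolution for $j\ge1$ follows from $|D^j\G|\lesssim\abs{\cdot}^{2-d-j}$ and the decay of $f$.

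For the base cases $j=1,2$ I would split the sum at the scale $|m|\sim|\ell|/2$. On the inner region $|m|\le|\ell|/2$, where $|\ell-m|\gtrsim|\ell|$, I would Taylor-expand $m\mapsto D^j\G(\ell-m)$ about $m=0$ to order $p$, working with a smooth interpolant of $\G$ whose derivatives inherit the bound $|\nabla^i D^j\G(\ell)|\lesssim\abs{\ell}^{2-d-j-i}$. The coefficient of $m^{\otimes i}$ for $i<p$ pairs $D^{j+i}\G(\ell)$ with the partial moment $\sum_{|m|\le|\ell|/2}f(m)\otimes m^{\otimes i}$; because the full moment $\mathcal{I}_i[u]$ vanishes, this equals minus the tail $\sum_{|m|>|\ell|/2}f(m)\otimes m^{\otimes i}$, which is $\lesssim\abs{\ell}^{i-p}\log^\alpha\abs{\ell}$, so each term contributes $\abs{\ell}^{2-d-j-i}\cdot\abs{\ell}^{i-p}\log^\alpha\abs{\ell}=\abs{\ell}^{2-d-j-p}\log^\alpha\abs{\ell}$. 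The order-$p$ Taylor remainder, bounded by $|m|^p\abs{\ell}^{2-d-j-p}$, is summed against $|f|$ and produces the decisive factor $\sum_{|m|\le|\ell|/2}\abs{m}^{-d}\log^\alpha\abs{m}\lesssim\log^{\alpha+1}\abs{\ell}$, which is the source of the extra logarithm in \eqref{eq:thm_structure_mainresult}. On the outer region $|m|>|\ell|/2$ I would estimate directly; the only dangerous piece is the near-diagonal part $|\ell-m|\le|\ell|/2$, where $|f(m)|\lesssim\abs{\ell}^{-d-p}\log^\alpha\abs{\ell}$ and $\sum_{|k|\le|\ell|/2}\abs{k}^{2-d-j}$ is $\lesssim\abs{\ell}$ for $j=1$ and $\lesssim\log\abs{\ell}$ for $j=2$, both compatible with the claimed bound, while the genuinely far part is summable.

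For $j\ge3$ this near-diagonal estimate breaks down, since $\sum_{|k|\le|\ell|/2}\abs{k}^{2-d-j}$ is now merely bounded and leaves a factor $\abs{\ell}^{-d-p}$ that decays too slowly; this is precisely what the additional hypotheses on $D^m H[u]$ are for. The key idea is to move the surplus differences off $\G$ and onto the force: setting $v:=D^{j-2}u$ one has $H[v]=D^{j-2}f$ and $D^2 v=D^j u$, with the convolution representation $v=\G*D^{j-2}f$. The differenced force decays like $\abs{\cdot}^{-d-p'}\log^\alpha\abs{\cdot}$ with $p':=p+j-2$ by the hypothesis with $m=j-2$, and, crucially, summation by parts rewrites each moment $\mathcal{I}_i[v]=\sum_m D^{j-2}f(m)\otimes m^{\otimes i}$ as a combination of moments $\mathcal{I}_{i'}[u]$ with $i'\le i-(j-2)$; hence $\mathcal{I}_i[v]=0$ for all $i<p'$. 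Thus $v$ satisfies the hypotheses of the theorem with $p$ replaced by $p'$, and the already-proved $j=2$ case gives $|D^2 v(\ell)|\lesssim\abs{\ell}^{2-d-p'-2}\log^{\alpha+1}\abs{\ell}=\abs{\ell}^{2-d-p-j}\log^{\alpha+1}\abs{\ell}$, which is the assertion; note that only a single logarithm is gained because the reduction lands directly on the base case rather than iterating.

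The main obstacle I anticipate is the rigorous execution of the discrete Taylor and moment cancellation in the base cases: one must smoothly interpolate $\G$ while preserving \emph{all} derivative decay rates, keep track of the many terms generated when expanding the difference stencils $D_{\boldsymbol{\rho}}$ in terms of $m^{\otimes i}$, and justify that the summations by parts used both in the moment transfer and in passing from full moments to tail sums carry no boundary contributions at infinity. By comparison, the bootstrap reducing $j\ge3$ to $j=2$ is the conceptually cleanest ingredient, contingent only on the representation $v=\G*D^{j-2}f$ and the validated $j=2$ estimate.
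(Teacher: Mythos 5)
Your proposal is correct in substance, but it takes a genuinely different route from the paper in both main steps, so a comparison is in order. For the representation formula you and the paper coincide (this is Lemma~\ref{th:lemma_for_thm_structure}), except that your derivation via $w:=u-\G\ast f$ needs repair when $d=2$: there $\G\ast f$ itself need not converge absolutely (for $p=0$ the summand $\log\lvert\ell-m\rvert\cdot\abs{m}^{-2}\log^\alpha\abs{m}$ is borderline divergent), which is precisely why the paper proves the identity only after applying at least one difference and treats $d=2$, $j=1$ by a separate cutoff argument. For the base case $j=1,2$, the paper does not Taylor-expand the Green's function at all: it converts $f$ into higher-order divergence form using the vanishing moments (Proposition~\ref{divlemmahigherorder}, built on Lemmas~\ref{symmetrictensors} and~\ref{th:Diffpellp_identity}) and then sums by parts in the region around $z=0$, moving discrete differences onto $\G$, whose decay is available purely discretely from \eqref{eq:decay-Gr}; smooth cutoffs absorb the boundary terms. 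Your multipole argument with the tail-moment trick (partial moment equals minus the tail) achieves the same cancellation and correctly identifies the order-$p$ Taylor remainder as the source of the extra logarithm, but it incurs the technical debt you flag: a smooth interpolant of $\G$ satisfying $\lvert\nabla^iD^j\G\rvert\lesssim\abs{\ell}^{2-d-i-j}$ for all $i\leq p$ is not among the standing assumptions (only the discrete bounds \eqref{eq:decay-Gr} are), so you would either have to invoke the expansion machinery of Theorem~\ref{thm:greensfunctionexpansion} from \S\ref{sec:greensfunctions}, or recast your expansion as a discrete Taylor expansion, where vanishing of all polynomial moments of degree $<p$ still yields the cancellation. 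For $j\geq3$ your bootstrap is both different from and cleaner than the paper's argument: the paper instead refines the near-diagonal term $T_4$ by splitting off two difference directions and summing the remaining $j-2$ by parts onto $f$ near $z=\ell$. Your reduction---observing $H[D^{j-2}u]=D^{j-2}H[u]$, that differencing raises the moment-vanishing order from $p$ to $p+j-2$, and then invoking the proved $j=2$ case with $p'=p+j-2$---is sound: at every intermediate stage of the moment transfer the summand decays like $\abs{m}^{-d-p-j+2+i}\log^\alpha\abs{m}\leq\abs{m}^{-d-1}\log^\alpha\abs{m}$ for $i\leq p+j-3$, so each summation by parts is a legitimate reindexing of an absolutely convergent sum, it consumes exactly the hypotheses $\lvert D^mH[u]\rvert\lesssim\abs{\ell}^{-d-p-m}\log^\alpha\abs{\ell}$, $m\leq j-2$, and it reproduces the single additional logarithm of the statement.
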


\begin{remark} \label{rem:alphaneg}
If $H[u]$ satisfies the assumptions for some $\alpha < -1$ instead of $\alpha \in \N_0$, then there is no logarithmic factor needed in the result, i.e., 
\begin{equation} 
  \label{eq:thm_structure_mainresult_nolog}
 \lvert D^j u(\ell) \rvert \lesssim \abs{\ell}^{2-d-p-j}.
\end{equation}
The same holds true for the Theorems \ref{thm:structurewithmoments} and \ref{thm:structurewithmomentscont} below, if $\alpha < -1$.
\end{remark}

\begin{remark} 
  When initially reading both this theorem and the theorems in \S\ref{sec:genresults} and \S\ref{sec:applications}, we suggest to ignore the logarithmic terms and focus only on the algebraic rates. However, the treatment of the logarithmic terms is an important aspect of both our theorems and proofs since they appear to be intrinsic to the expansion, and not due to suboptimal estimates.
\end{remark}

Suppose now that we have a general elastic field $u$ with non-vanishing moments \eqref{eq:results:defn_Ij} but still fast decay of the residual forces. Then, we can decompose $u$ into a truncated multipole expansion --- higher order derivatives of the lattice Green's function defined in \S~\ref{sec:results:model} --- corresponding to the non-vanishing moments and a {\em far-field remainder} that exhibits the improved decay established in Theorem~\ref{thm:structure}. 
This idea is made precise in the next result.

\begin{theorem} \label{thm:structurewithmoments}
Assume $\lvert H [u]  \rvert \lesssim \abs{\ell}^{-d-p} \log^\alpha \abs{\ell}$ with $p, \alpha \in \N_0$. Furthermore, let $\mathcal{S} \subset \La$ be linear independent with $\spano_\Z \mathcal{S}=\La$. Then there are coefficients $b^{(i,k)} \in (\R^{\mathcal{S}})^{\odot i}$, $i=0, \dots, p-1$, $k=1,\dots, N$, such that
\begin{equation} \label{eq:multipole+remainder}
    u = \sum_{i=0}^{p-1} \sum_{k=1}^N b^{(i,k)} : D_{\rm \mathcal{S}}^i \mathcal{G}_k + w 
\end{equation}
and the remainder decays as 
\begin{equation}\label{eq:structurewithmomentsremainder}
\lvert D^j w \rvert \lesssim \abs{\ell}^{2-d-p-j}  \log^{\alpha+1} \abs{\ell} 
\end{equation} 
for $j=1,2$.

If $j \geq 3$, then equation \eqref{eq:structurewithmomentsremainder} is still true under the additional assumption that $\lvert D^m H [u] \rvert \lesssim \abs{\ell}^{-d-p-m} \log^\alpha \abs{\ell}$ for all $m=1,...,j-2$.
\end{theorem}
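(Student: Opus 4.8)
The strategy is to subtract off an explicit multipole expansion built from derivatives of the lattice Green's function so that the remainder $w$ satisfies the hypotheses of Theorem~\ref{thm:structure}, from which the decay estimate follows immediately. The key is to choose the coefficients $b^{(i,k)}$ so that all moments $\mathcal{I}_0[w], \dots, \mathcal{I}_{p-1}[w]$ vanish.

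First I would set up the action of $H$ on the proposed multipole terms. By definition $H[\mathcal{G}_k](\ell) = e_k \delta_{\ell,0}$, and since $H$ commutes with the discrete difference operators $D_{\boldsymbol{\rho}}$ (as $H$ is a constant-coefficient lattice operator), we have $H[b^{(i,k)} : D_{\mathcal{S}}^i \mathcal{G}_k] = b^{(i,k)} : D_{\mathcal{S}}^i (e_k \delta_{\cdot,0})$, a finitely-supported field. Writing $w = u - \sum_{i,k} b^{(i,k)} : D_{\mathcal{S}}^i \mathcal{G}_k$, linearity gives
\[
H[w](\ell) = H[u](\ell) - \sum_{i=0}^{p-1}\sum_{k=1}^N b^{(i,k)} : D_{\mathcal{S}}^i (e_k \delta_{\cdot,0})(\ell).
\]
Since the subtracted term is compactly supported, the tail decay hypothesis $|H[w](\ell)| \lesssim \abs{\ell}^{-d-p}\log^\alpha\abs{\ell}$ (and likewise for the higher differences $D^m H[w]$) is inherited automatically from the corresponding assumption on $H[u]$.

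The heart of the argument is then the moment-matching computation. The remainder $w$ qualifies for Theorem~\ref{thm:structure} with the same $p$ precisely when $\mathcal{I}_j[w] = 0$ for $j = 0, \dots, p-1$. Using the moment definition \eqref{eq:results:defn_Ij} and summation by parts to transfer the discrete differences $D_{\mathcal{S}}^i$ onto the monomial $\ell^{\otimes j}$, I would compute $\mathcal{I}_j[D_{\mathcal{S}}^i(e_k\delta_{\cdot,0})]$ explicitly. The differences acting on $\ell^{\otimes j}$ produce lower-degree polynomial weights, so this yields a triangular linear system relating the unknowns $b^{(i,k)}$ to the prescribed moments $\mathcal{I}_j[u]$: the top moment $\mathcal{I}_{p-1}$ involves only the highest-order coefficients $b^{(p-1,k)}$, the next involves $b^{(p-1,k)}$ and $b^{(p-2,k)}$, and so on. The requirement that $\mathcal{S}$ be linearly independent with $\spano_\Z \mathcal{S} = \La$ ensures that the symmetric tensor products of the stencil vectors form a basis in each degree, so each diagonal block of this triangular system is invertible and the coefficients $b^{(i,k)} \in (\R^{\mathcal{S}})^{\odot i}$ are uniquely determined by back-substitution.

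The main obstacle I anticipate is the bookkeeping in this triangular solve, specifically verifying that the diagonal blocks are genuinely invertible and land in the symmetric tensor space $(\R^{\mathcal{S}})^{\odot i}$ as claimed. This requires checking that summation by parts on $D_{\mathcal{S}}^i(e_k\delta_{\cdot,0})$ against $\ell^{\otimes (p-1)}$ produces, up to lower-order contributions, the full symmetric product $\boldsymbol{\rho}^\odot$ of the involved stencil vectors, and that the spanning hypothesis on $\mathcal{S}$ indeed makes these span $(\R^{d})^{\odot i}$. Once the coefficients are fixed so that all lower moments of $w$ vanish, the decay estimate \eqref{eq:structurewithmomentsremainder} for $j=1,2$, and for $j\geq 3$ under the additional difference hypotheses on $H[u]$, follows verbatim from the conclusion of Theorem~\ref{thm:structure} applied to $w$.
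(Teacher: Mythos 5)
Your proposal is correct and takes essentially the same route as the paper: Lemma~\ref{lem:bijectionmomentstob} carries out precisely your moment-matching construction --- summation by parts via Lemma~\ref{th:Diffpellp_identity}, with invertibility of the diagonal blocks $b \mapsto \sum_{\boldsymbol{\rho}\in\mathcal{S}^j} b_{\boldsymbol{\rho}}\,\boldsymbol{\rho}^\odot$ supplied by Lemma~\ref{symmetrictensors} and the spanning hypothesis on $\mathcal{S}$ --- and Theorem~\ref{thm:structure} is then applied to the remainder $w$ exactly as you propose, the compactly supported perturbation of $H[u]$ being harmless. The only slip is that your triangularity runs backwards: since $i$ discrete differences annihilate polynomials of degree less than $i$, the contribution of $b^{(i,k)}$ to $\mathcal{I}_j$ vanishes for $i > j$ (not for $i < j$; e.g.\ $D_{-\rho}(\ell^{\otimes 2})(0) = \rho\otimes\rho \neq 0$), so it is $\mathcal{I}_0$ that involves only $b^{(0,k)}$ and one solves upward from the lowest moment --- a harmless correction, since the diagonal blocks $(-1)^j j!\sum_{\boldsymbol{\rho}\in\mathcal{S}^j} b_{\boldsymbol{\rho}}\,\boldsymbol{\rho}^\odot$ are the same invertible maps either way (the paper's own displayed computation is in fact less careful still, asserting the system is fully diagonal by silently dropping the $i<j$ terms, yet its conclusion likewise survives by this triangularity).
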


It is convenient to have a continuum reformulation of this multipole expansion, to avoid having to work with the discrete Green's function and its discrete derivatives. Towards that end we exploit the connections between continuum and discrete Green's functions and derive higher order continuum approximations of the discrete Green's function.
Specifically, in \S~\ref{sec:greensfunctions} we will construct a sequence of continuum kernels with the following properties.

\begin{theorem} \label{thm:greensfunctionexpansion}
There are unique kernels $G_n \in C^{\infty}(\R^d\backslash\{0\}; \R^{N \times N})$ such that
\[\Big\lvert D^j \G(\ell) - \sum_{n=0}^p D^j G_n(\ell)\Big\rvert \leq C_{j,p} \abs{\ell}^{-2p-d-j} \]
for all $\ell \in \La$ and $j,p \in \N_0$ and such that the $G_n$ are positively homogeneous of degree $(2-2n-d)$ if $n \geq 1$ or $d \geq 3$, while in the case $n=0, d=2$ we have
$G_0(\ell) = A \log \lvert \ell \rvert + \varphi(\ell)$, where $A \in \R^{N \times N}$ and $\varphi$ is $0$-homogeneous. Furthermore, we find $G_0 = G^{\rm C}$.
\end{theorem}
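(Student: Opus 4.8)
The plan is to pass to the Fourier side and read off the entire expansion from the singularity of the symbol of $H$ at the origin. Writing $\widehat H(k)$ for the symbol of $H$ — a trigonometric polynomial in $k$ which is real-analytic and, by the point symmetry \eqref{eq:pointsymmetry}, even in $k$ — the stability assumption \eqref{eq:results:latticestab} guarantees that $\widehat H(k)$ is invertible for every $k \neq 0$ in the Brillouin zone $B$, with its only degeneracy at $k=0$, where $\widehat H(k) = \widehat H_2(k) + O(|k|^4)$ and the principal symbol $\widehat H_2(k)$ is exactly the symbol of the CLE operator \eqref{eq:CLE}, positive definite by Legendre--Hadamard. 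One then has $\G(\ell) = c_{\rm vol}^{-1}\int_B \widehat H(k)^{-1} e^{i k\cdot\ell}\,dk$, and the large-$\ell$ structure of $\G$ is governed entirely by the behaviour of $\widehat H(k)^{-1}$ near $k=0$.

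First I would expand the inverse symbol at the origin. Since $\widehat H(k) = \sum_{m\ge 0}\widehat H_{2m+2}(k)$ with each $\widehat H_{2m+2}$ a homogeneous polynomial of degree $2m+2$, a Neumann series for $(I + \widehat H_2^{-1}(\widehat H - \widehat H_2))^{-1}$, regrouped by homogeneity, gives $\widehat H(k)^{-1} = \sum_{n\ge 0}\Psi_n(k)$, where $\Psi_n$ is a rational function that is homogeneous of degree $2n-2$ and smooth on $\R^d\setminus\{0\}$ (its denominators involve only the positive-definite $\widehat H_2$), with $\Psi_0 = \widehat H_2^{-1}$. I would then \emph{define} $G_n$ to be the inverse Fourier transform of $\Psi_n$, interpreted as a homogeneous tempered distribution; the standard theory of homogeneous distributions makes $G_n$ homogeneous of degree $-d-(2n-2) = 2-2n-d$ and smooth away from the origin. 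Identifying $G_0$ is then immediate: $\Psi_0 = \widehat H_2^{-1}$ is precisely the Fourier symbol of the fundamental solution of \eqref{eq:CLE}, so $G_0 = G^{\rm C}$. The single exceptional case is $n=0$, $d=2$, where $\Psi_0$ has degree $-2 = -d$ and the inverse transform of a degree-$(-d)$ symbol acquires a logarithm, producing $G_0(\ell) = A\log|\ell| + \varphi(\ell)$ with $A\in\R^{N\times N}$ and $\varphi$ $0$-homogeneous; for $n\ge 1$ the degree $2n-2 > -d$ avoids this radial resonance, and because each $\Psi_n$ carries genuine non-polynomial angular dependence, any would-be anomalous contributions are supported at the origin and hence invisible at lattice points $\ell\neq 0$, so no further logarithms arise.

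The technical heart, and the step I expect to be the main obstacle, is the sharp error estimate, uniform in $j$ and $p$. Truncating the symbol expansion leaves the remainder $R_p(k) := \widehat H(k)^{-1} - \sum_{n=0}^p\Psi_n(k)$, whose leading homogeneity at the origin is that of $\Psi_{p+1}$, namely degree $2p$. The error $\G - \sum_{n\le p} G_n$ is the inverse Fourier transform of $\chi R_p$ for a fixed cutoff $\chi$ near $k=0$, the complementary piece $(1-\chi)\widehat H^{-1}$ being smooth on $B$ and contributing only a Schwartz-class, hence negligible, tail. On the Fourier side each discrete difference $D_\rho$ acts by multiplication by the bounded factor $(e^{i k\cdot\rho}-1)$, so $D^j\big(\G - \sum_{n\le p} G_n\big)$ is the oscillatory integral of $R_p(k)\prod_{m}(e^{i k\cdot\rho_m}-1)$ against $e^{i k\cdot\ell}$. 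Bounding this integral by scaling in the leading homogeneity of $R_p$ together with repeated integration by parts to exploit the $j$ difference factors yields the claimed rate $|\ell|^{-2p-d-j}$; the careful bookkeeping that renders these bounds uniform in both $j$ and $p$, correctly converts continuum homogeneity into discrete-difference decay, and isolates the single logarithmic anomaly, is where the real work lies.

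Finally, uniqueness follows by an asymptotic-matching argument requiring no further Fourier analysis. If $\{G_n\}$ and $\{\tilde G_n\}$ both satisfy the stated estimates, let $n$ be the smallest index at which they differ and set $\delta_n := G_n - \tilde G_n$. Subtracting the two expansions at truncation order $p=n$, and using $G_m=\tilde G_m$ for $m<n$, gives $|D^j\delta_n(\ell)| \lesssim |\ell|^{-2n-d-j}$. But $\delta_n$ is homogeneous of degree $2-2n-d$, so $D^j\delta_n$ scales like $|\ell|^{2-2n-d-j}$ along lattice rays; the two rates differ by a factor $|\ell|^2$ and are therefore incompatible for large $\ell$ unless $\delta_n\equiv 0$. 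In the anomalous case $n=0$, $d=2$ the same comparison forces $A=0$ and $\varphi\equiv 0$, since neither $\log|\ell|$ nor a nonzero $0$-homogeneous function decays like $|\ell|^{-2}$. This pins down the family $\{G_n\}$ uniquely and completes the plan.
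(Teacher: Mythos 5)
Your core strategy coincides with the paper's: your $\Psi_n$ are exactly the paper's $\mathcal{A}_{2n-2}$ from the regrouped Neumann series \eqref{eq:HinvSeries}, your definition of $G_n$ as the inverse continuum Fourier transform of $\Psi_n$ is the paper's definition (up to a normalisation slip: the correct prefactors are $1/\lvert\mathcal{B}\rvert = c_{\rm vol}/(2\pi)^d$ for $\G$ and $c_{\rm vol}\mathcal{F}_c^{-1}$ for $G_n$, not $c_{\rm vol}^{-1}$), and your uniqueness argument is the paper's almost verbatim, except that you should add the final density-and-continuity step: homogeneity plus the decay estimate first gives vanishing of $G_n-\tilde G_n$ only on rays through lattice points, and one then uses that these directions are dense in the unit sphere. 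The one genuinely different choice is establishing smoothness, homogeneity, and the $d=2$ logarithm by the abstract theory of homogeneous tempered distributions (H\"ormander-type results) rather than by identifying $G_n$ with Morrey's surface-integral kernels \eqref{eq:morreykernel}; your route is shorter, while the paper's identification lemma buys an explicit, computable representation and a concrete handle on the exceptional case. On that exceptional case, beware that in $d=2$ the Fourier integral you write for $\G$ is not absolutely convergent: both $\G$ and $G_0$ must be renormalised with \emph{matching} counterterms (the paper uses the same $\log\delta+\gamma$ regularisation in both definitions). Since the claimed estimate includes $j=0$, a constant offset between inconsistent regularisations would already falsify it, so this is not ignorable bookkeeping.

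The step that fails as literally described is the sharp rate. Repeated integration by parts (the $(-\Delta)^m$ multiplier argument, which is what the paper actually does) is capped by integrability: with $\lvert\partial^\alpha\bigl(\chi R_p\prod_m(e^{ik\cdot\rho_m}-1)\bigr)\rvert\lesssim \lvert k\rvert^{2p+j-\lvert\alpha\rvert}$ near $k=0$, one may take at most $\lvert\alpha\rvert\le 2p+j+d-1$, which yields only $\lvert\ell\rvert^{2-2p-d-j}$ (or $\lvert\ell\rvert^{1-2p-d-j}$ with odd-order multipliers), one to two powers short of the claim --- this is exactly the paper's intermediate estimate \eqref{eq:GGpdiffweak}. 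Bridging this gap requires a further idea: either a genuine dyadic decomposition in $\lvert k\rvert$ exploiting symbol-type bounds on $R_p$ on each annulus (possibly what you intend by ``scaling in the leading homogeneity'', but you do not carry it out), or the paper's bootstrap, which is cleaner: prove the lossy estimate for \emph{every} $p$, apply it at order $p+1$, and use the homogeneity bound $\lvert D^j G_{p+1}\rvert\lesssim\abs{\ell}^{-2p-d-j}$ together with the triangle inequality to recover the sharp estimate at order $p$. Without one of these devices the central inequality of the theorem is not established.
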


The $G_n$, $n>0$, are higher order corrections resolving the atomistic-continuum error. We will give precise definitions of all the $G_n$ in \S~\ref{sec:greensfunctions}. In addition we want to point out that the $G_n$ are practically computable via Fourier methods. A formula for that is also given in \S~\ref{sec:greensfunctions}.

Returning to the multipole expansion, if $p=1,2$ in Theorem \ref{thm:structurewithmoments}, then one can replace the lattice Green's function $\G$ with the continuum Green's function $G^{\rm C}$. However, for a higher order continuum description, higher order $G_n$ need to be used. If we also use Taylor expansions to get actual derivatives we get a pure continuum expansion.

\begin{theorem} \label{thm:structurewithmomentscont}
Assume $\lvert H [u]  \rvert \lesssim \abs{\ell}^{-d-p} \log^\alpha \abs{\ell}$ with $p, \alpha \in \N_0$. Then there are $a^{(i,n,k)} \in (\R^{d})^{\odot i}$ such that 
\[u = \sum_{k=1}^N \sum_{n=0}^{\big\lfloor \frac{p-1}{2} \big\rfloor} \sum_{i=0}^{p-1-2n} a^{(i,n,k)} : \nabla^i (G_{n})_{\cdot k} + \tilde{w} \]
and the remainder decays as 
\begin{equation} \label{eq:structurewithmomentscontremainder}
 \lvert D^j \tilde{w} \rvert \lesssim \abs{\ell}^{2-d-p-j}  \log^{\alpha +1} \abs{\ell}
\end{equation}
for $j=1,2$.

If $j \geq 3$, then equation \eqref{eq:structurewithmomentscontremainder} is still true under the additional assumption that $\lvert D^m H [u] \rvert \lesssim \abs{\ell}^{-d-p-m} \log^\alpha \abs{\ell}$ for all $m=1,...,j-2$.
\end{theorem}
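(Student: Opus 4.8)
The plan is to start from the discrete multipole expansion already furnished by Theorem~\ref{thm:structurewithmoments} and to convert each discrete Green's function term into continuum objects in two stages: Theorem~\ref{thm:greensfunctionexpansion} replaces $\G$ by the homogeneous kernels $G_n$, and a Taylor expansion replaces the discrete differences $D_{\mathcal{S}}^i$ by continuum derivatives $\nabla^{i'}$. First I would apply Theorem~\ref{thm:structurewithmoments} with $\mathcal{S}$ chosen to be a $\Z$-basis of $\La$, giving
\[
  u = \sum_{i=0}^{p-1}\sum_{k=1}^N b^{(i,k)} : D_{\mathcal{S}}^i \G_k + w,
  \qquad |D^j w| \lesssim \abs{\ell}^{2-d-p-j}\log^{\alpha+1}\abs{\ell},
\]
so that $w$ is already admissible as part of the final remainder $\tilde w$ (including for $j\ge 3$ under the stated extra hypothesis on $D^m H[u]$, which is inherited verbatim).

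Next I would fix the truncation order $P := \lfloor (p-1)/2\rfloor$ and, term by term, substitute $\G_k = \sum_{n=0}^{P} (G_n)_{\cdot k} + R_{P,\cdot k}$ from Theorem~\ref{thm:greensfunctionexpansion}. The remainder contributes $b^{(i,k)} : D_{\mathcal{S}}^i R_{P,\cdot k}$; since $D_{\mathcal{S}}^i$ is a fixed finite combination of lattice differences (each $D_\sigma$, $\sigma\in\mathcal{S}$, telescopes into differences along $\Rc$, which spans $\La$), applying a further $D^j$ produces a difference of total order $i+j$, and the bound $|D^{i+j}R_{P}|\lesssim \abs{\ell}^{-2P-d-i-j}$ combined with $2P\ge p-2$ shows this is $\lesssim \abs{\ell}^{2-d-p-j}$ and may be placed in $\tilde w$.

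For each retained kernel $(G_n)_{\cdot k}$, $0\le n\le P$, I would Taylor expand the discrete difference operator to order $I_n := p-1-2n$,
\[
  D^i_{\mathcal{S}}(G_n)_{\cdot k} = \sum_{i'=i}^{I_n} c^{(i,i')} : \nabla^{i'}(G_n)_{\cdot k} + E^{(i,n)}_k,
\]
where $c^{(i,i')}$ are explicit tensors built from tensor powers of the fixed vectors in $\mathcal{S}$, and $E^{(i,n)}_k$ is the Taylor remainder of order $I_n+1$ (when $i>I_n$, equivalently $2n+i\ge p$, there are no retained terms and the whole expression is treated as remainder). Because $(G_n)_{\cdot k}$ is smooth away from the origin and positively homogeneous of degree $2-2n-d$, and for large $|\ell|$ the relevant difference segments stay away from $0$, the standard estimate for discrete differences of homogeneous functions yields $|D^j E^{(i,n)}_k| \lesssim \abs{\ell}^{\,2-2n-d-(I_n+1)-j} = \abs{\ell}^{\,2-d-p-j}$, so these remainders also go into $\tilde w$. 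The $n=0$, $d=2$ logarithm affects only $(G_0)_{\cdot k}$ itself and not its derivatives, hence it enters exclusively through the retained term with $n=i'=0$ and never pollutes the remainder estimates.

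Finally I would collect terms. Each retained object $\nabla^{i'}(G_n)_{\cdot k}$ with $i'=i,\dots,I_n$ and $0\le n\le P$ satisfies $2n+i'\le p-1$ — precisely the index range in the statement — and is homogeneous of degree $2-2n-d-i'>2-d-p$; summing the resulting coefficients over all originating $i\le i'$ defines the tensors $a^{(i',n,k)}\in(\R^{d})^{\odot i'}$ (symmetry of the slots may be imposed since $\nabla^{i'}$ is symmetric), after relabelling $i'\mapsto i$. Gathering $w$, the Green's remainders $R_P$, and the Taylor remainders $E^{(i,n)}_k$ into $\tilde w$, the three bounds above combine to give the claimed decay \eqref{eq:structurewithmomentscontremainder}, and Remark~\ref{rem:alphaneg} follows by tracking that the only logarithm in the remainder comes from $w$. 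The main obstacle I anticipate is the careful bookkeeping of decay rates: one must check that the truncation orders $P$ and $I_n$ are chosen consistently so that every discarded piece decays at least like $\abs{\ell}^{2-d-p-j}$ after $j$ further differences while every retained piece is strictly slower, and that the discrete-difference-of-homogeneous-function estimate gains exactly one power of decay per difference uniformly — which is where the compatibility between $D_{\mathcal{S}}$ and the full stencil $D$, together with the avoidance of the singularity at the origin for large $|\ell|$, must be used.
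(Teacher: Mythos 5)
Your proposal is correct and takes essentially the same route as the paper: there, Theorem~\ref{thm:structurewithmomentscont} is proved by combining Theorem~\ref{thm:structurewithmoments} with Lemma~\ref{lem:MPCMP}, whose proof is precisely your two-stage conversion --- truncating the Green's function expansion of Theorem~\ref{thm:greensfunctionexpansion} at order $\lfloor (p-1)/2 \rfloor$ and Taylor-expanding the discrete differences $D_{\mathcal{S}}^i$ applied to the kernels $G_n$ at the matching orders $p-1-2n$. Your write-up simply makes explicit the decay bookkeeping that the paper compresses into ``an immediate consequence''.
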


\subsection{The Full Far-Field Expansion}
\label{sec:contdev}
Theorem \ref{thm:structurewithmoments} lays out a path on how to construct good far-field approximations for a solution $\bar{u}$ of the atomistic equations $\delta \mathcal{E} (\bar{u})=0$. Instead of looking at the solutions directly it suffices to construct an approximate solution of the equations $\hat{u}$ such that the remainder $r$ defined by $\bar{u}=: \hat{u} + r$ has small linearized forces $H[r]$ in the far field. For this approach to be useful, it is desirable that $\hat{u}$ is both easy to understand analytically and practically computable. 

Our goal is to construct smooth continuum approximations through the addition of successive corrector terms $u_i^{\rm C}$ up to the desired order. We write $\bar{u}=\hat{u}_p+r_p= u_0^{\rm C} + u_1^{\rm C} + \dots + u_{p}^{\rm C} + r_p$ and aim to achieve $\lvert H[r_p] \rvert \lesssim \abs{\ell}^{-d-p-1}$, so that with Theorem \ref{thm:structurewithmoments} we have
\[\bar{u}= \sum_{i=0}^p u_i^{\rm C} + \sum_{i=1}^{p} \sum_{k=1}^N b^{(i,k)} : D_{\rm \mathcal{S}}^i \mathcal{G}_k + w_p\]
with a remainder $w_p$ satisfying $\lvert w_p \rvert \lesssim \abs{\ell}^{1-d-j-p}$; that is, the remainder is highly localised around the defect core.

The precise statements for both point defects and screw dislocations are given in \S~\ref{sec:applications}. The full rigorous construction of the $u_i^{\rm C}$ is given in the proofs in \S~\ref{sec:prooffarfieldexpansion}. We do however want to formally outline this construction here.

The first step is a Taylor expansion of the energy around the lattice or, more precisely, the potential $V$ around zero, giving
\[0 = \delta \mathcal{E}(\bar{u}) = \sum_{k=1}^{\tilde{K}} \frac{1}{k!} \delta^{k+1} \mathcal{E}(0)[\bar{u}]^{k} + h.o.t.\]
Separating out the linear terms and inserting the ansatz $\bar{u}= \hat{u}_p + r_p = u_0 + u_1 + \dots + u_{p} + r_p$ gives
\[ \sum_{i=0}^{p} H[u_i] + H[r_{p}] = -\sum_{k=2}^{\tilde{K}} \frac{1}{k!} \delta^{k+1} \mathcal{E}(0)[\hat{u}_p]^{k} + h.o.t.\]
The next step then is to Taylor expand the discrete differences in $H[u_i]$ and $\delta^{k+1} \mathcal{E}(0)[\hat{u}_p]^{k}$ leading to continuum differential operators. In particular, the leading order term for $H[u_i]$ is $c_{\rm vol} H^{\rm C}[u_i]$ followed by higher order differential operators.

Formally, $u_i$ is of the order $\abs{\ell}^{2-d-i}$ with each derivative or discrete difference adding one order of decay. This allows us to group all the resulting terms into  $\mathcal{S}_i$, based on their order of decay. We thus obtain
\begin{equation} \label{eq:Hrpequation}
H[r_{p-1}] = \sum_{i=0}^p c_{\rm vol} \Big(\mathcal{S}_i(u_0, \dots, u_{i-1}) - H^{\rm C}[u_i]\Big) + h.o.t.
\end{equation}
We can therefore use the PDE
\begin{equation} \label{eq:correctorequation}
H^{\rm C}[u_i^{\rm C}] = \mathcal{S}_i(u_0, \dots, u_{i-1})
\end{equation}
to define $u_i^{\rm C}$. When we look at all the details of this construction in \S~\ref{sec:prooffarfieldexpansion}, we will see that the $u_i$ as used on the right hand side of \eqref{eq:correctorequation} has to include the multipole terms of that order, which we can write as
\[u_i = u_i^{\rm C} + u_i^{\rm MP}.\]
In particular, it is then important to know that with the help of Theorem \ref{thm:structurewithmomentscont} we can use the their smooth continuum variant instead of the discrete one.

The $\mathcal{S}_i$ in \eqref{eq:correctorequation} are in general non-linear and higher order differential operators. Crucially though, $\mathcal{S}_i$ only depends on the terms $u_0, \dots, u_{i-1}$ and not $u_i$ itself. That means that the equation defining $u_i$ is always  the same second order, elliptic continuum linear elasticity equation $H^{\rm C}[u_i^{\rm C}] = f_i$ for some residual forces $f_i$.

With $u_i^{\rm C}$ defined this way, most of the terms on the right hand side of \eqref{eq:Hrpequation} cancel out and a precise estimate of the higher order errors gives the desired estimate for $H[r_{p-1}]$.

In \S~\ref{sec:prooffarfieldexpansion}, we give a precise definition of the $\mathcal{S}_i$ in the corrector equation \eqref{eq:correctorequation}. The grouping of the terms into different $\mathcal{S}_i$ depends on the dimension $d$. As an example, for $d=2$, the first three $S_i$ are given by
\begin{align}
\mathcal{S}_0&=0,\\
\mathcal{S}_1(u_0^{\rm C}) &=\frac{1}{2}\divo \big(\nabla^3 W (0)[\nabla u_0^{\rm C}]^2\big)\\
\mathcal{S}_2(u_0^{\rm C}, u_1^{\rm C}, u_1^{\rm CMP}) &= \divo \big(\nabla^3 W (0)[\nabla u_0^{\rm C},\nabla u_1^{\rm C}+u_1^{\rm CMP} ]\big)\\
&\quad +\frac{1}{6}\divo \big(\nabla^4 W (0)[\nabla u_0^{\rm C}]^3\big)\nonumber \\
&\quad - H_{\rm SG}[u_0^{\rm C}] \nonumber
\end{align}  

Recall from \eqref{eq:defn_W_cb} that $W(A) = \frac{1}{c_{\rm vol}} V((A\rho)_{\rho \in \Rc})$ is the Cauchy-Born energy density for $A\in \R^{N \times d}$ and $H_{\rm SG}$ is a linear differential operator describing a strain-gradient term in linear elasticity. It is defined by
\begin{align*} 
  H_{\rm SG}[u] := \frac{1}{12 c_{\rm vol}} \sum_{\sigma, \rho \in \Rc} \nabla^2 V(0)_{\sigma \rho} \big( 3\nabla^4 u[\sigma,\sigma,\rho,\rho] -2\nabla^4 u[\sigma,\rho,\rho,\rho]&  \\
        -2\nabla^4 u[\sigma,\sigma,\sigma,\rho] \,& \big) .
\end{align*}

If on the other hand $d=3$, then
\begin{align}
\mathcal{S}_0&=0,\\
\mathcal{S}_1&=0,\\
\mathcal{S}_2&=\frac{1}{2}\divo \nabla^3 W(0)[\nabla u_0^{\rm C}]^2 - H_{\rm SG}[u_0^{\rm C}].
\end{align} 

\section{Far Field Expansion for Crystalline Defects}
\label{sec:applications}
We now demonstrate how our general structural results can be directly applied to obtain precise characterisations of the discrete elastic far-fields surrounding crystalline defects. Our results apply directly to points defects and screw dislocations; edge and mixed mode dislocations require additional ideas due to their more non-trivial lattice topology and will therefore not be discussed here. In addition, we briefly outline how these characterisations give rise to novel algorithms for simulating such defects.

\subsection{Point defects}
\label{sec:results:point}
We consider point defects first. We briefly review the setting of \cite{EOS2016} to motivate the formulation of our main result in this context. First, we assume that the point defect has a reference configuration $\Ladef \subset \R^d, d \geq 2$ which is locally finite and homogeneous outside some defect radius $\Rdef$, meaning $\Ladef \setminus B_{\Rdef} = \Lambda\setminus B_{\Rdef}$.

Let $\Rc_\ell$ denote a finite interaction range for each site $x \in \Ladef$ and assume that there is a family of site energies $V_\ell \in C^K(\R^{d \Rc_\ell}), \ell \in \Ladef$. Moreover, assume $N=d$ and that there is a homogeneous interaction range $\Rc$ and site energy $V \in C^K(\R^{d \Rc})$ for all sites of $\La$, and that $\Rc_\ell = \Rc, V_\ell =  V$ for all $\ell \in \Ladef \setminus B_{\Rdef}$. The potential energy under a displacements $u : \Ladef \to \R^d$ and $u : \La \to \R^d$ are then, respectively, given by
\begin{align*}
   \mathcal{E}^{\rm def}(u) &:= \sum_{\ell \in \Ladef} \big[V_\ell (D_{\mathcal{R}_\ell} u(\ell)) - V_\ell(0)\big], \\
   \mathcal{E}(u) &:= \sum_{\ell \in \La} \big[ V (D_{\mathcal{R}} u(\ell)) - V(0) \big].
\end{align*}
With these definitions $\mathcal{E}^{\rm def}$ is then well defined on $\HHc$ and have a unique continuous extension to $\HH$. The same is true with $\mathcal{E}^{\rm def}$ for analogously defined $\HHc(\Ladef)$ and $\HH(\Ladef)$. Furthermore with $V \in C^K$ we also find $\mathcal{E}, \mathcal{E}^{\rm def} \in C^K$, all of which is shown in \cite[Lemma 1]{EOS2016}.

Allowing $\Ladef \neq \La$ in $B_{\Rdef}$ admits defects such as vacancies and interstitials, while allowing inhomogeneity of $V_{\ell}$ admits impurities and foreign interstitials.

A {\em point defect} can be thought of as a finite-energy equilibrium of $\mathcal{E}$, that is, equilibrium displacements $\bar{u}^{\rm def} \in \HH(\Ladef)$ such that
\begin{equation}
  \label{eq:equil_point}
   \delta \mathcal{E}^{\rm def}(\bar{u}^{\rm def})[v] = 0 \qquad \forall v \in \HHc(\Ladef).
\end{equation}
A notationally convenient approach is to simply project $\bar{u}^{\rm def}$ to the homogeneous lattice. That is, we define $\bar{u} : \La \to \R^d$ by 
\[
   \bar{u}(\ell) := \begin{cases}
      \bar{u}^{\rm def}(\ell), & \ell \in \La \cap \Ladef, \\
      0, & \ell \in \La \setminus \Ladef.
   \end{cases}
\]
This is of course only one of many possible projections, which we have made only for the sake of notational convenience. Our subsequent results are essentially independent of how this projection is performed. Most importantly, because we have $\bar{u} = \bar{u}^{\rm def}$ outside the defect core we obtain that
\[
   \delta \mathcal{E}(\bar{u})(\ell) = \delta \mathcal{E}(\bar{u})[\delta_\ell] = 0,
\]
for $\lvert \ell \rvert$ large enough. Here $\delta_\ell(\ell') := \delta_{\ell\ell'}$. With a small amount of additional work one can in fact show that
\[
   \delta \mathcal{E}(\bar{u})[v] = (g, Dv)_{\ell^2} \qquad \forall v \in \HH(\La),
\]
where $g : \La \to \R^{d \times \Rc}$ with ${\rm supp}(g) \subset B_{\Rdef}$. This motivates the setup for our next result.

\begin{theorem} \label{thm:pointdef}
   Choose $p \geq 0, J \geq 0$ and suppose that $V \in C^{K}(\R^{d \times  \Rc})$, such that 
   $K \geq J+2+ \max \{0,\lfloor \frac{p-1}{d} \rfloor\}$. Let $g : \La \to \R^{d \times \Rc}$ with compact support, and let
    $\bar{u} \in \HH(\La)$ such that
   \[
      \delta \mathcal{E}(\bar{u})[v] = ( g, D v )_{\ell^2}
      \quad \forall v \in \HHc(\La).
   \]
   Then, there exist $u_i^{\rm C} \in C^\infty$ and coefficients $b^{(i,k)}$ such that 
\[
\bar{u} = \sum_{i=d+1}^p u_i^C + \sum_{i=1}^{p} \sum_{k=1}^N b^{(i,k)} : D_{\rm \mathcal{S}}^i \mathcal{G}_k + r_{p+1},
\]   
and such that the $u_i^C$ satisfy the PDEs in equation \eqref{eq:uCiPDE} with $u_i^C=0$ for $0 \leq i \leq d$. Furthermore, the remainder $r_{p+1}$ satisfies the estimate
   \[\lvert D^j r_{p+1} \rvert \lesssim \abs{\ell}^{1-d-j-p}\log^{p+1} \abs{\ell}\]
for $j= 1, \dots, J$.   
\end{theorem}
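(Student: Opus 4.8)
The plan is to reduce the nonlinear claim to the linear structure theorems of \S\ref{sec:results:general} by constructing, order by order, a smooth-plus-multipole predictor $\hat u_p = \sum_{i} (u_i^{\rm C} + u_i^{\rm MP})$ whose \emph{linearised} residual force $H[\bar u - \hat u_p]$ decays fast enough to feed into Theorem~\ref{thm:structurewithmoments}. The starting point is the a priori decay of the equilibrium itself: since $g$ has compact support, $\delta\mathcal E(\bar u)(\ell) = -\Div g(\ell) = 0$ once $\lvert\ell\rvert$ is large, and the regularity theory of \cite{EOS2016} yields the dipole-rate bound $\lvert D\bar u(\ell)\rvert \lesssim \abs{\ell}^{-d}$. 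Writing $\delta\mathcal E(\bar u) = H[\bar u] + \sum_{k\ge 2}\tfrac1{k!}\delta^{k+1}\mathcal E(0)[\bar u]^k$, the nonlinearity is at least quadratic in $D\bar u$, so in the far field $\lvert H[\bar u](\ell)\rvert \lesssim \abs{\ell}^{-2d-1}$, one extra order being gained from the discrete divergence. In particular the monopole vanishes, $\mathcal I_0[\bar u]=\sum_\ell H[\bar u](\ell)=0$ as a sum of discrete divergences, so the leading far-field contribution is the dipole.

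\textbf{The inductive construction.}
The core is an induction on the order $i$ that successively cancels the far-field part of $H[\bar u - \sum_{j<i} u_j^{\rm C}]$. Suppose the correctors $u_0^{\rm C},\dots,u_{i-1}^{\rm C}$ (each of order $\abs{\ell}^{2-d-j}$) and the multipole coefficients $b^{(j,k)}$, $j<i$, have been built so that $\lvert H[\bar u - \sum_{j<i}u_j^{\rm C}](\ell)\rvert \lesssim \abs{\ell}^{-d-i}\log^{i-1}\abs{\ell}$ away from the core. Taylor-expanding every discrete difference in the nonlinear terms $\delta^{k+1}\mathcal E(0)[\,\cdot\,]^k$ and in $H[u_j^{\rm C}]$ around the continuum, as in \S\ref{sec:contdev}, and collecting the contributions of order $\abs{\ell}^{-d-i}$ produces the source $\mathcal S_i(u_0,\dots,u_{i-1})$ of the corrector equation~\eqref{eq:correctorequation}; here $u_j = u_j^{\rm C}+u_j^{\rm MP}$, and by Theorem~\ref{thm:greensfunctionexpansion} the discrete multipole $u_j^{\rm MP}=\sum_k b^{(j,k)}:D_{\mathcal S}^j\G_k$ may be replaced inside $\mathcal S_i$ by its smooth continuum surrogate built from the $G_n$, which is the role of Theorem~\ref{thm:structurewithmomentscont}. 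I define $u_i^{\rm C}$ as the decaying solution of the elliptic CLE system $H^{\rm C}[u_i^{\rm C}]=\mathcal S_i$, obtained by convolution with $G^{\rm C}$ and elliptic regularity, giving $u_i^{\rm C}\in C^\infty$ with $\lvert\nabla^j u_i^{\rm C}\rvert\lesssim\abs{\ell}^{2-d-i-j}\log^{i}\abs{\ell}$, while the order-$i$ moment of the residual with the lower multipoles removed fixes the next coefficient $b^{(i,k)}$. Since $H[\G_k]=e_k\delta_0$, the multipole terms contribute only a point source at the origin and are invisible in the far field, so after subtracting $u_i^{\rm C}$ the residual improves to $\lvert H[\bar u - \sum_{j\le i}u_j^{\rm C}](\ell)\rvert\lesssim\abs{\ell}^{-d-i-1}\log^{i}\abs{\ell}$, closing the induction. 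After $p$ steps, applying Theorem~\ref{thm:structurewithmoments} to $\bar u - \sum_{i}u_i^{\rm C}$, whose linearised force decays like $\abs{\ell}^{-d-p-1}\log^{p}\abs{\ell}$, extracts the discrete multipole series $\sum_{i}b^{(i,k)}:D_{\mathcal S}^i\G_k$ and leaves the remainder $r_{p+1}$ with $\lvert D^j r_{p+1}\rvert\lesssim\abs{\ell}^{1-d-j-p}\log^{p+1}\abs{\ell}$.

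\textbf{Point-defect specifics.}
Three features particular to point defects must be extracted. First, the continuum correctors vanish in low order: the only field available below order $d+1$ is the dipole $u_1^{\rm MP}\sim\abs{\ell}^{1-d}$, which is a pure multipole term solving the discrete equation exactly and hence sources no continuum correction; the strain-gradient terms $H_{\rm SG}[u_{i-2}^{\rm C}]$ act only on continuum correctors and so are also absent, and the first genuinely nonvanishing source is the quadratic nonlinearity $\tfrac12\divo(\nabla^3 W(0)[\nabla u_1]^2)\sim\abs{\ell}^{-2d-1}$, which a direct order count places precisely at order $i=d+1$. Hence $\mathcal S_i=0$ and $u_i^{\rm C}=0$ for $0\le i\le d$, which gives both the lower limit of the corrector sum and the vanishing of the monopole. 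Second, the smoothness budget must be tracked: the highest-degree nonlinearity still contributing below order $p$ is the $(m+1)$-linear term $\nabla^{m+2}W(0)[\nabla u_1]^{m+1}$, which first appears at order $md+1$, so one needs $m\le\lfloor (p-1)/d\rfloor$ derivatives beyond the quadratic, together with a baseline of $2$ and the $J$ further derivatives consumed by the difference estimates $\lvert D^j r_{p+1}\rvert$ for $j\le J$; this closes exactly at $K\ge J+2+\max\{0,\lfloor (p-1)/d\rfloor\}$. Third, the logarithms accumulate one power per application of the linear theorem, producing the final factor $\log^{p+1}\abs{\ell}$.

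\textbf{Main obstacle.}
The main difficulty I anticipate is the rigorous, order-uniform control of the Taylor-expansion remainders together with the decay of the continuum correctors. Converting the discrete operators $H[u_j^{\rm C}]$ and the nonlinear terms into continuum differential operators generates a proliferation of error terms, each of which must be shown to decay strictly faster than the order currently being collected so that it is safely absorbed into $H[r_i]$; this is delicate precisely because the fields $u_j$ and the Green's-function derivatives decay only algebraically and carry logarithmic factors, so the estimates have to be sharp rather than merely summable. Equally delicate is proving that the CLE solution operator preserves the homogeneous-plus-logarithm decay structure of the sources $\mathcal S_i$ when these are themselves assembled from products and high derivatives of previously constructed correctors and of the continuum Green's functions; here the homogeneity statements of Theorem~\ref{thm:greensfunctionexpansion} and careful elliptic/Calder\'on--Zygmund estimates for $H^{\rm C}$ do the work, but keeping track of which homogeneity degree and which power of the logarithm attaches to each term is the part most likely to require painstaking bookkeeping.
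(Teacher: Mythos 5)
Your proposal is correct and follows essentially the same route as the paper's own proof: the \cite{EOS2016} decay estimate as the base case, an order-by-order induction that Taylor-expands the equilibrium equation and then the discrete differences, defines each corrector by the CLE equation $H^{\rm C}[u_i^{\rm C}]=\mathcal{S}_i$ with the discrete multipoles replaced by their smooth continuum surrogates, and feeds the improved linearised residual into Theorem~\ref{thm:structurewithmoments}; you also reach the correct point-defect-specific conclusions ($u_i^{\rm C}=0$ for $0\le i\le d$, first nontrivial source $\tfrac12\divo\big(\nabla^3W(0)[\nabla u_1^{\rm CMP}]^2\big)$ at order $d+1$, the regularity count $K\ge J+2+\max\{0,\lfloor (p-1)/d\rfloor\}$, and one logarithm gained per application of the linear theorem). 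The details you defer to bookkeeping are precisely those the paper resolves inside this framework: the remainder's nonlinear self-appearance in the residual equation is handled with the (suboptimal) estimates inherited from the previous induction step, and a homogeneity argument shows that reapplying the structure theorem does not perturb the previously fixed lower-order multipole terms.
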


\begin{remark} \label{rem:pointdef}
In particular, up to order $p=d$, we have the pure multipole expansion
\begin{align} \label{eq:puremultipole}
\bar{u} = \sum_{i=1}^{d} \sum_{k=1}^N b^{(i,k)} : D_{\rm \mathcal{S}}^i \mathcal{G}_k + r_{d+1},
\ \text{where} \ 
\lvert D^j r_{d+1} \rvert \lesssim \abs{\ell}^{1-2d-j} \log^{d+1} \abs{\ell}.
\end{align}
Effects from the nonlinearity and higher order derivatives are only noticeable in terms beyond that.
\end{remark}

\begin{remark} \label{rem:pointdef2}
In the point defect case it is likely possible to reduce the number of logarithms in the estimate somewhat for all orders. We do not want to explore this in detail but want to point out the log-free estimate for low orders which directly follows from \eqref{eq:puremultipole} and estimates on the lattice Green's function based on Theorem \ref{thm:greensfunctionexpansion}. To be precise, for $p<d$ we have
\begin{align*}
\bar{u} = \sum_{i=1}^{p} \sum_{k=1}^N b^{(i,k)} : D_{\rm \mathcal{S}}^i \mathcal{G}_k + r_{p+1},
\ \text{where} \
\lvert D^j r_{p+1} \rvert \lesssim \abs{\ell}^{1-d-p-j}.
\end{align*}
\end{remark}

\subsection{Screw dislocations} \label{sec:results:screw}

Now let us consider screw dislocations. Again, our modelling follows the setup in \cite{EOS2016} and \cite{BBO19}.
We consider a straight screw dislocation with periodic behaviour along the dislocation line so that we can project to the lattice to a two-dimensional lattice on the normal plane to describe the behaviour. Hence we have $d=2$ and $N=3$ meaning $u \colon \La \subset \R^2 \to \R^3$, though $N$ is left arbitrary in the following to include for example the case in \cite{BBO19} where $N=1$.

Again we have a finite interaction range $\Rc$ and a site energy $V \in C^K(\R^{d \Rc})$. The potential energy is then formally given by
\begin{align*}
   \mathcal{E}(u) &= \sum_{\ell \in \La} V (D_{\mathcal{R}} u(\ell)).
\end{align*}
However that sum will usually not converge so we follow \cite{EOS2016} and consider the energy differences instead,
\begin{equation} \label{eq:energydislocation}
   \mathcal{E}(u) := \sum_{\ell \in \La} \Big(V (D_{\mathcal{R}} u(\ell))-V (D_{\mathcal{R}} u_{\rm CLE}(\ell))\Big),
\end{equation}
where $u_{\rm CLE}$ is the continuum linear elasticity solution.

More precisely, $u_{\rm CLE}$ solves 
\begin{align} 
   -\divo \mathbb{C}[\nabla u] = 0&,  \qquad \text{in } \R^2 \setminus \Gamma, \label{eq:clescrewPDE}\\
   u(x+) - u(x-) = - b&,  \qquad \text{for } x \in \Gamma \setminus \{\hat{x}\}, \label{eq:clescrewjump}\\
   \nabla_{e_2} u(x+) - \nabla_{e_2} u(x-) = 0&, \qquad \text{for } x \in \Gamma \setminus \{\hat{x}\} \label{eq:clescrewgradientatcut},\\
         \lvert \nabla u \rvert \to 0&,  \qquad \text{for } \lvert x \rvert \to \infty \label{eq:clescrewgradientatinfinity}\\
         -\int_{\partial B_1(\hat{x})} \mathbb{C}[\nabla u] \nu \, d\sigma =0&. \label{eq:clescrewnetforce}
\end{align}
where $b \in \R^3$, $b \parallel e_3$, is the Burgers vector of the screw dislocation, $\hat{x} \in \R^2$ is the reference position of the dislocation core
and $\Gamma := \{ x \in \R^2 : x_2 = \hat{x}_2, x_1 \geq \hat{x}_1 \}$
a branch-cut chosen such that $\Gamma \cap \Lambda = \emptyset$. We want to point out that the precise positioning of $\hat{x}$ is not crucial and does not have physical meaning as the difference between two shifted solutions is in the energy space $\HH$.

Equation \eqref{eq:clescrewnetforce} was missed in \cite{EOS2016} but is in fact crucial for the results there to be true. It encodes the assumption that the system has zero net force and thus avoids spurious solutions of the type $g(x)= u_{\rm CLE}(x) + G^C(x-\hat{x})$. However, the standard construction of a solution $u_{\rm CLE}$, which can be found, e.g., in the book by Hirth and Lothe \cite{hirth-lothe}, already takes it into account. Therefore, the results of \cite{EOS2016}  and later works that build on it remain correct provided such a solution $u_{\rm CLE}$ is employed.

The following observation links it to the atomistic setting.
\begin{proposition} \label{prop:uCLE}
Let $u \in C^3(\R^2 \backslash \Gamma), \nabla u \in C^2(\R^2 \setminus \{\hat{x}\})$ solve \eqref{eq:clescrewPDE}, \eqref{eq:clescrewjump}, and \eqref{eq:clescrewgradientatcut} with $\lvert \nabla^j u \rvert \lesssim \lvert \ell-\hat{x} \rvert^{-j}$ for $j=1,2,3$. Then

\[-\int_{\partial B_1(\hat{x})} \mathbb{C}[\nabla u] \nu \, d\sigma = \sum_{\ell \in \Lambda} \delta \mathcal{E}(u)(\ell) = \sum_{\ell \in \Lambda} H[u](\ell). \]
Therefore \eqref{eq:clescrewnetforce} is equivalent to either of these sums vanishing.
\end{proposition}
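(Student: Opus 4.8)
The plan is to express each of the three quantities as the $R \to \infty$ limit of a flux through $\partial B_R(\hat{x})$ and to match these limits; the stated equivalence with \eqref{eq:clescrewnetforce} is then immediate. I would organise the argument into three steps: (i) reducing the nonlinear sum to the linearised one, (ii) identifying the linearised sum with the continuum boundary integral, and (iii) radius-independence of the latter. The common device throughout is that summing a discrete divergence over $B_R(\hat{x})$ telescopes to a thin boundary layer around $\partial B_R$, with no contribution from interior features such as the cut $\Gamma$.

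For (i), write $\delta\mathcal{E}(u)(\ell) - H[u](\ell) = -\Div\big(g_{\rm nl}\big)(\ell)$, where $g_{\rm nl}(\ell) := \nabla V(Du(\ell)) - \nabla V(0) - \nabla^2 V(0)[Du(\ell)]$; the constant $\nabla V(0)$ may be inserted freely since $\Div$ annihilates $\ell$-independent fields. Taylor's theorem gives $\lvert g_{\rm nl}(\ell)\rvert \lesssim \lvert Du(\ell)\rvert^2 \lesssim \lvert \ell - \hat{x}\rvert^{-2}$, using $\lvert\nabla u\rvert \lesssim \lvert\ell-\hat{x}\rvert^{-1}$ together with the fact that, on bonds meeting $\Gamma$, $Du$ may be replaced by its smooth continuation because the Burgers vector $b$ is a period of $V$. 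Summing over $B_R(\hat{x})$ telescopes: for each $\rho \in \Rc$, $\sum_{\ell \in B_R}\big(g_\rho(\ell-\rho)-g_\rho(\ell)\big)$ collapses to terms indexed by the symmetric difference of $B_R$ and $B_R - \rho$, a layer of bounded width around $\partial B_R$. Interior features such as $\Gamma$ generate no extra terms, as telescoping only exposes values in this layer. The layer carries $O(R)$ sites with $\lvert g_{\rm nl}\rvert \lesssim R^{-2}$, so the flux is $O(R^{-1}) \to 0$, whence $\sum_\ell \delta\mathcal{E}(u)(\ell) = \sum_\ell H[u](\ell)$, both truncated sums converging by the same decay estimates.

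The heart of the proof is (ii). Since $H[u] = -\Div\sigma$ with discrete stress $\sigma(\ell) := \nabla^2 V(0)[Du(\ell)]$, the same summation by parts rewrites $\sum_{\ell\in B_R}H[u](\ell)$ as a discrete flux of $\sigma$ through the boundary layer of $\partial B_R$, again with no contribution from $\Gamma$ interior to $B_R$. On $\partial B_R$ the field $u$ is smooth — the ray $\Gamma$ meets the sphere in $O(1)$ points — so a Cauchy-Born expansion, replacing $Du(\ell)$ by $(\nabla u(\ell)\,\rho)_{\rho\in\Rc}$ via Taylor's theorem and using $\mathbb{C} = \nabla^2 W(0)$ from \eqref{eq:defn_W_cb}, identifies the discrete flux with the continuum flux $\int_{\partial B_R}\mathbb{C}[\nabla u]\nu\,d\sigma$ up to a consistency error. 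The odd-order terms vanish by the symmetry \eqref{eq:pointsymmetry}, and the leading remainder per boundary site is controlled by $\lvert\nabla^2 u\rvert + \lvert\nabla^3 u\rvert \lesssim R^{-2}$; summed over the $O(R)$ boundary sites this is $O(R^{-1})\to 0$. For (iii), the divergence theorem on the annulus $B_R\setminus B_1$ shows the continuum flux is $R$-independent: there $\divo\mathbb{C}[\nabla u]=0$ by \eqref{eq:clescrewPDE}, and by \eqref{eq:clescrewgradientatcut} the stress $\mathbb{C}[\nabla u]$ is continuous across $\Gamma$ (where the regularity $\nabla u \in C^2(\R^2\setminus\{\hat{x}\})$ enters), so the two cut faces cancel and only the circles $\partial B_R$ and $\partial B_1$ survive. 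Letting $R\to\infty$ then produces the value at $R=1$, with the correct orientation and sign.

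I expect the main obstacle to be the discrete-to-continuum flux comparison in (ii) under the borderline decay of the field: the stress $\sigma$ itself decays only like $\lvert\ell-\hat{x}\rvert^{-1}$, so it is the consistency error, not the stress, that must be shown to vanish, which forces careful bookkeeping of the Taylor remainders in the boundary layer and essential use of the evenness \eqref{eq:pointsymmetry} to cancel the non-decaying contributions. A closely related technical point is the treatment of bonds crossing $\Gamma$, where $Du$ differs from the smooth finite difference by $b$: one must argue that, since $b$ is a period of $V$, such bonds contribute to $H[u]$ and to the flux exactly as their smooth continuation would, so they neither spoil the convergence of $\sum_\ell H[u](\ell)$ nor the boundary-flux identification.
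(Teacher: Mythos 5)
Your overall strategy is the same as the paper's: push everything to a far-field flux by summation by parts, kill the linearisation and discretisation errors using the decay of $\nabla u$, and evaluate the surviving continuum flux via the PDE. The difference is the cutoff. The paper tests $\delta\mathcal{E}(u)$ and $H[u]$ against $e_i\eta_R$ for a \emph{smooth} cutoff $\eta_R$ ($\eta_R=1$ on $B_R$, $0$ outside $B_{2R}$, $\lvert\nabla\eta_R\rvert\lesssim R^{-1}$), so that every discrete-to-continuum comparison takes place in the fat annulus $B_{2R}\setminus B_R$ containing $O(R^2)$ sites, where the linearisation error, the finite-difference-to-gradient error and the sum-to-integral (midpoint rule) error are each $O(R^{-1})$ by elementary per-cell estimates; the boundary term at $\partial B_1(\hat{x})$ then appears purely at the continuum level, by integrating $\int\mathbb{C}[e_i\otimes\nabla\eta_R,\nabla u]\,dx$ by parts on $\R^2\setminus B_1(\hat{x})$ and using \eqref{eq:clescrewPDE}. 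You instead use a sharp cutoff and try to match the discrete flux through a width-$O(1)$ shell around $\partial B_R$ directly with $\int_{\partial B_R}\mathbb{C}[\nabla u]\nu\,d\sigma$.

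That is where your proposal has a genuine gap. In your step (ii) the only error you control is the Cauchy--Born replacement $Du(\ell)\mapsto(\nabla u(\ell)\rho)_{\rho\in\Rc}$, at $O(R^{-2})$ per site over $O(R)$ sites. But after that replacement you are still holding a \emph{sum over lattice points} lying in slivers such as $(B_R-\rho)\setminus B_R$, of width comparable to the lattice spacing, and you must show that this sum converges to a continuum \emph{surface integral}, i.e.\ that the geometric weight $(\rho\cdot\nu)\,d\sigma/c_{\rm vol}$ emerges from counting which lattice points fall in which sliver. This quadrature step concerns the leading-order stress values themselves ($O(R^{-1})$ per site, $O(1)$ in total), not the Taylor remainders, and naive volume counting fails in regions of width $O(1)$: comparing the lattice sum with $c_{\rm vol}^{-1}$ times the sliver integral produces boundary-cell discrepancies of total size $O(1)$ that do not vanish as $R\to\infty$. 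The step can be repaired --- telescope separately along each lattice line parallel to $\rho$, note that each line contributes the stress at exactly one entry and one exit point within $O(\lvert\rho\rvert)$ of $\partial B_R$, and then run a one-dimensional Riemann-sum/total-variation argument in the transverse parameter --- but this is a substantive argument you have not supplied, and it is precisely the difficulty the paper's smooth cutoff is designed to avoid. Two smaller points: your steps (i) and (iii) are fine, since (i) needs only an upper bound ($O(R)$ shell sites times $O(R^{-2})$) and (iii) is standard once \eqref{eq:clescrewgradientatcut} and the constancy of the jump give continuity of $\mathbb{C}[\nabla u]$ across $\Gamma$; however, note that periodicity of $V$ under the Burgers vector gives $\nabla V(Du)=\nabla V(D\tilde{u})$ for bonds crossing $\Gamma$ (with $\tilde{u}$ the smooth continuation), but it does \emph{not} give $\nabla^2V(0)[Du]=\nabla^2V(0)[D\tilde{u}]$, so for the linearised terms the slip-adjusted reading of $Du$ is a convention of the framework rather than a consequence of periodicity.
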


Indeed, the property $\sum \delta \mathcal{E}(u)(\ell) =0$ is heavily used in \cite{EOS2016} and we will use their results here. 


With the definition \eqref{eq:energydislocation} the energy $\mathcal{E}$ is then well defined on $u_{\rm CLE} + \HHc$ and has a unique continuous extension to $u_{\rm CLE} + \HH$. And with $V \in C^K$ we also find $\mathcal{E} \in C^K$, see \cite[Lemma 3]{EOS2016}.

\begin{theorem} \label{thm:screw}
   Suppose that $V \in C^{K}(\R^{N \times\Rc})$, $K \geq J+2+ p$ with $p \geq 0$ and $J \geq 2$. Let $\bar{u} \in \HH(\La)$ solve
   \begin{equation} \label{eq:equil_screw}
      \delta \mathcal{E}(\bar{u})[v] = 0
      \quad \forall v \in \HHc(\La).
   \end{equation}
   Then, there exist $u_i^{\rm C} \in C^\infty$ and coefficients $b^{(i,k)}$ such that 
\[
\bar{u} = \sum_{i=0}^p u_i^C + \sum_{i=1}^{p} \sum_{k=1}^N b^{(i,k)} : D_{\rm \mathcal{S}}^i \mathcal{G}_k + r_{p+1},
\]   
and such that the $u_i^C$ satisfy the PDEs \eqref{eq:uCiPDE} and $u_0^{\rm C}=u_{\rm CLE}$. Furthermore, the remainder $r_{p+1}$ satisfies the estimate
   \[\lvert D^j r_{p+1} \rvert \lesssim \abs{\ell}^{-1-j-p}\log^{p+1} \abs{\ell}\]
for $j= 1, \dots, J$.   
\end{theorem}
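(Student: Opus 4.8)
The plan is to carry out, in the screw geometry, the abstract construction outlined in \S\ref{sec:contdev}: build a smooth continuum approximation $\hat u_p = \sum_{i=0}^p u_i^{\rm C}$ whose linearised residual force $H[\bar u - \hat u_p]$ decays faster than any single corrector, and then read off the multipole tail and the remainder bound directly from the linear structural results, Theorems~\ref{thm:structurewithmoments} and~\ref{thm:structurewithmomentscont}. The one structural novelty relative to the point-defect Theorem~\ref{thm:pointdef} is that the leading corrector is non-trivial: $u_0^{\rm C} = u_{\rm CLE}$ carries the far-field prescribed by the branch-cut problem \eqref{eq:clescrewPDE}--\eqref{eq:clescrewnetforce}. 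Because the energy \eqref{eq:energydislocation} is defined relative to $u_{\rm CLE}$, the correction $v := \bar u - u_{\rm CLE}$ lies in $\HH$, so all finite differences $D\bar u(\ell)$ decay and the pointwise Taylor expansion of $V$ about $0$ converges for large $\abs{\ell}$ even though $\bar u$ itself is unbounded. This is what legitimises expanding $\delta\mathcal{E}(\bar u)(\ell)=0$ as in \eqref{eq:Hrpequation}.

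\emph{Step 1: the monopole vanishes.} I would first check that the multipole series opens at $i=1$, i.e. that the order-zero coefficient is zero. Since $\delta\mathcal{E}(\bar u)\equiv 0$, the linearised force $H[\bar u]$ equals the discrete divergence of the purely nonlinear part of the stress; the latter is quadratic in $D\bar u$ and hence decays like $\abs{\ell}^{-2}$, so its total sum is a flux of that stress at infinity which vanishes in $d=2$, giving $\mathcal{I}_0[\bar u]=0$. Combined with $\sum_\ell H[u_{\rm CLE}](\ell)=0$, which is exactly the net-core-force identity of Proposition~\ref{prop:uCLE} together with \eqref{eq:clescrewnetforce}, this yields $\mathcal{I}_0[v]=0$ and forces the monopole coefficient in Theorem~\ref{thm:structurewithmoments} to vanish. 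This is precisely where the net-force condition is indispensable: without it a spurious $G^{\rm C}$ monopole would survive.

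\emph{Step 2: inductive corrector construction.} I would then define $u_1^{\rm C},\dots,u_p^{\rm C}$ and the multipole coefficients $b^{(i,k)}$ by induction on the order, following \eqref{eq:Hrpequation}--\eqref{eq:correctorequation}. Suppose $u_0^{\rm C},\dots,u_{i-1}^{\rm C}$ and the multipoles of order below $i$ have been constructed so that the current residual force decays like $\abs{\ell}^{-d-i}\log^{i-1}$. I form the grouped right-hand side $\mathcal{S}_i(u_0,\dots,u_{i-1})$, where each $u_m=u_m^{\rm C}+u_m^{\rm MP}$ includes its multipole part, legitimately replaced by the smooth continuum surrogate supplied by Theorem~\ref{thm:structurewithmomentscont} and the kernel expansion of Theorem~\ref{thm:greensfunctionexpansion}; I then define $u_i^{\rm C}$ as the decaying solution of the continuum linear elasticity equation $H^{\rm C}[u_i^{\rm C}]=\mathcal{S}_i$. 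Taylor-expanding both $H[u_i^{\rm C}]$ and the nonlinear contributions from differences to derivatives, the defining choice of $\mathcal{S}_i$ cancels every term of order $\abs{\ell}^{-d-i}$ and improves the residual-force decay to $\abs{\ell}^{-d-i-1}\log^{i}$; a further application of Theorem~\ref{thm:structurewithmoments} peels off the order-$i$ multipole $b^{(i,k)}$. Iterating up to $i=p$ yields $\lvert H[r_p]\rvert\lesssim\abs{\ell}^{-d-p-1}\log^{p}$, and one last application of Theorem~\ref{thm:structurewithmoments} produces the stated decomposition with $\lvert D^j r_{p+1}\rvert\lesssim\abs{\ell}^{2-d-(p+1)-j}\log^{p+1}=\abs{\ell}^{-1-j-p}\log^{p+1}$ in $d=2$, for $j=1,\dots,J$; the cases $j\ge 3$ invoke the differentiated force bounds, available because $K\ge J+2+p$ furnishes enough derivatives of $V$.

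\emph{Main obstacle.} The double Taylor expansion and the decay/log bookkeeping are routine and parallel the point-defect argument. The genuinely screw-specific difficulty is the family of elliptic solves in Step~2: each $\mathcal{S}_i$ inherits from derivatives of $u_{\rm CLE}$ both a singularity at the core $\hat x$ and the multivaluedness across $\Gamma$, so I must solve $H^{\rm C}[u_i^{\rm C}]=\mathcal{S}_i$ on the slit plane $\R^2\setminus\Gamma$ and produce a corrector with the correct homogeneity, the sharp decay $\lvert\nabla^j u_i^{\rm C}\rvert\lesssim\lvert x-\hat x\rvert^{-i-j}$, controlled logarithmic growth, and well-defined lattice differences near the cut (using $\Gamma\cap\La=\emptyset$). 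Pinning the exact power of the logarithm through these solves and through the repeated use of Theorem~\ref{thm:structurewithmoments}, rather than merely bounding it, is the most delicate part, and is where the $\log^{p+1}$ factor in the final estimate is determined.
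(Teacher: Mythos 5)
Your overall architecture coincides with the paper's proof: induction over the expansion order, a two-stage Taylor expansion producing corrector PDEs $H^{\rm C}[u_i^{\rm C}]=\mathcal{S}_i$, continuum surrogates for the multipole terms via Theorems~\ref{thm:structurewithmomentscont} and~\ref{thm:greensfunctionexpansion}, and applications of Theorem~\ref{thm:structurewithmoments} to peel off multipoles and bound the remainder. However, there is a genuine gap at the start. You claim that since $v=\bar u-u_{\rm CLE}\in\HH$, ``all finite differences $D\bar u(\ell)$ decay'', and that this legitimises the expansion. Membership in $\HH$ only gives $Dv\in\ell^2$, hence $Dv(\ell)\to0$ with \emph{no rate}; but your induction hypothesis at $i=1$ (residual force $\lesssim\abs{\ell}^{-d-1}$) requires the pointwise rates $\lvert D^j(\bar u-u_{\rm CLE})\rvert\lesssim\abs{\ell}^{-1-j}\log\abs{\ell}$. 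These come from the nonlinear regularity theory of \cite[Thm.~5]{EOS2016} and cannot be produced by the linear results Theorems~\ref{thm:structure}--\ref{thm:structurewithmoments} alone: the remainder equation $H[s_p]=\mathcal{T}_{p+1}$ has $s_p$ on both sides (inside the nonlinear terms $\mathcal{T}_{p+1}^{(2)}$), so a purely linear bootstrap has no starting point. The paper invokes \cite{EOS2016} precisely for the case $p=0$, and only then can treat the nonlinearity perturbatively at higher orders (Lemma~\ref{lem:seconderror}), because there $s_p$ enters only quadratically and the suboptimal previous-order decay suffices. Your Step~1 flux argument for $\mathcal{I}_0=0$ has the same hidden dependence: it needs rates for $D\bar u$, not just $\ell^2$-decay.

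Two further points are glossed or misread. (i) When you ``peel off the order-$i$ multipole'' at step $i$, Theorem~\ref{thm:structurewithmoments} actually returns coefficients of \emph{all} orders $0,\dots,i$; since the lower-order multipole terms $u_m^{\rm CMP}$, $m<i$, sit inside the already-fixed right-hand sides $\mathcal{S}_m$, you must show that the newly produced lower-order contributions vanish, otherwise the induction is inconsistent. The paper does this in its concluding step by comparing the $(2-d-i)$-homogeneity of the putative new terms $v_i^{\rm CMP}$ with the decay $\abs{\ell}^{2-d-p-j}\log^{p}\abs{\ell}$ of $s_p$, which forces $v_i^{\rm CMP}=0$ for $i<p$. (ii) Your declared ``main obstacle'' --- elliptic solves on the slit plane $\R^2\setminus\Gamma$ with multivalued, jump-carrying data --- does not in fact occur: by the structure of \eqref{eq:Sqdefn}, every $\mathcal{S}_i$ depends on $u_{\rm CLE}$ only through $\nabla u_{\rm CLE}$ and higher gradients, which are single-valued across $\Gamma$ by \eqref{eq:clescrewgradientatcut}; the corrector equations are solved in the exterior domain $\R^d\setminus B_{R_0}(0)$ by Proposition~\ref{prop:contestimates} (convolution with $G^{\rm C}$ after conversion to higher-order divergence form), and all correctors $u_i^{\rm C}$, $i\geq1$, are single-valued functions. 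Only $u_0=u_{\rm CLE}$ carries the branch cut, so no new slit-domain PDE theory is needed.
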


\begin{remark}
Contrary to the point defect none of these terms are expected to vanish in general, except for a few special cases which are explored in \cite{BBO19}. In particular, the regularity assumption cannot be weakened as in the point defect case. Indeed, our general theory  without looking at any special cases requires $K \geq J+2+ \lfloor \frac{p}{d-1} \rfloor$. As far as our proof goes the number of logarithms is optimal for $d=2$, though probably not for higher dimension. We also expect this to be generic for the theorem itself as indeed the $u_i^C$ will (in general) contain higher and higher logarithmic terms. However, in special cases these logarithmic terms in the $u_i^C$ do not necessarily always appear, as explored in \cite{BBO19}.
\end{remark}

\subsection{Accelerated Convergence of Cell Problems}
\label{sec:cell_problems}
\def\calW{\mathcal{W}}
An immediate application of the defect expansions of Theorems~\ref{thm:pointdef} and~\ref{thm:screw} is that they
suggest a novel family of numerical schemes that exploit these expansions to accelerate the simulation of crystalline
defects. Here, we will only sketch one such scheme, but leave a more
detailed analysis for future work.

Consider the equilibration of a point defect or a screw dislocation
near the origin as in  Theorems~\ref{thm:pointdef} and~\ref{thm:screw}.
We define a family of restricted displacement spaces
\begin{align}
   \calW_R &:= \big\{  v : \Lambda \to \R^N \,|\,
                    v(\ell) = 0 \text{ for $|\ell| > R$} \big\}, 
                  \\
    \mathcal{U}_R &:= \big\{ u = u_0^C + v \,|\, v \in \calW_R \big\},
\end{align}
where atoms are clamped in their reference configurations outside a ball
with radius $R$.  Then we can approximate \eqref{eq:equil_point}, \eqref{eq:equil_screw} by the Galerkin projection
\begin{equation}
   \label{eq:cellp:galerkin}
   \delta \mathcal{E}(\bar{u}_R)[v] = 0 \qquad \forall v \in \calW_R,
\end{equation}
where $\bar{u}_R \in \mathcal{U}_R$. 

Under suitable stability conditions it is then shown in \cite{EOS2016} that
\begin{equation} \label{eq:slow_convergence_cell}
   \| D\bar{u}_R - D\bar{u} \|_{\ell^2} \leq C R^{-d/2} \log^p R,
\end{equation}
for $R$ sufficiently large, where $p \in \{0, 1\}$. This convergence is an almost immediate corollary of the
decay estimate $|Dr_1(\ell)| \lesssim \abs{\ell}^{-d} \log^p \abs{\ell}$. (For energy minima,
\cite{EOS2016} can be applied directly while for saddle points the analysis of
\cite{BDO20} can be readily adapted.) Our aim now is to accelerate this
relatively slow convergence by providing an improved far-field boundary
condition. 

The overarching principle is to
\begin{enumerate} 
  \item replace the naive far-field predictor 
\[
   \hat{u}_0 := \begin{cases}
      0, & \text{point defects}, \\ 
      u_{\rm CLE}, & \text{dislocations}
   \end{cases}
\]  
with the higher-order predictor
\[
  \hat{u}_p := \sum_{i = 0}^p u_i^{\rm C}
\]
   \item and to enlarge the admissible corrector space with the multipole moments 
  \begin{align*}
    \mathcal{U}_R^{(p)} := \big\{
        v : \Lambda \to \mathbb{R}^N \,|\, & \,\,
        v = \sum_{i = 1}^p \sum_{k = 1}^N b^{(i,k)} : D^i_{\mathcal{S}} \mathcal{G}_k
              + w, \\ 
          & \text{for free coefficients $b^{(i,k)}$ and ${\rm supp}~ w \subset \Lambda \cap B_R$ }
    \big\}
  \end{align*}
  That is, the corrector displacement is now parametrised by its values in 
  the computational domain $B_R \cap \Lambda$ and by the coefficients 
  of the multipole terms.
\end{enumerate}

\noindent We can then consider the pure Galerkin approximation scheme: find 
$\bar{u}_{p, R} \in \mathcal{U}^{(p)}_R$ such that 
\begin{equation} \label{eq:galerkin}
    \delta \mathcal{E}(\bar{u}_{p, R}) [v_R] = 0 \qquad \forall v_R \in \calW_R. 
\end{equation}

The arguments of \cite{EOS2016} leading to \eqref{eq:slow_convergence_cell} are generic Galerkin approximation arguments, leveraging the strong stability condition. They can be followed {\em verbatim} up to the intermediate result (C\'{e}a's Lemma)
\[
  \big\| D\bar{u} - D\hat{u}_p - D\bar{v}_R \big\|_{\ell^2}
  \leq C \inf_{v_R \in \mathcal{U}_R^{(p)}}
  \big\| D\bar{u} - D\hat{u}_p - Dv_R \big\|_{\ell^2}.
\]
The existence of $\bar{v}_R$ is implicitly guaranteed through an application of the inverse function theorem, due to the fact that the right-hand side in this estimate approaches zero as $R \to \infty$.
To estimate the right-hand side we can insert the {\em exact} tensors $b^{i,k}$ from the solution representation of Theorems~\ref{thm:pointdef}, \ref{thm:screw} into $v_R$, in order to obtain 
$D\bar{u} - D\hat{u}_p - Dv_R = Dr_{p+1} - Dw_R$, where $r_{p+1}$ is the core remainder term,  
and hence 
\[
  \inf_{v_R \in \mathcal{U}_R}
  \big\| D\bar{u} - D\bar{u}_{p, R} \big\|_{\ell^2} \leq 
  \inf_{w_R \in \mathcal{U}_R}
  \big\| Dr_{p+1} - Dw_R \big\|_{\ell^2},
\]
We can now define $w_R$ to be a suitable truncation of $r_{p+1}$ to the computational domain $B_R$. The details are given in \cite[Thm. 2]{EOS2016} and immediately yield the following result. 

\begin{theorem}  \label{th:galerkin}
  Suppose that $\bar{u}$ is a strongly stable solution of \eqref{eq:equil_point} or \eqref{eq:equil_screw}; that is, there exists a stability constant $c_0 > 0$ such that 
  \[
      \delta^2 \mathcal{E}(\bar{u})[v,v] 
      \geq c_0 \| Dv \|^2, \qquad \forall v \in \HH(\Lambda),
  \]
  then, for $R$ sufficiently large, there also exists a solution $\bar{u}_R \in \mathcal{U}_R^{(p)}$ to the Galerkin scheme \eqref{eq:galerkin} such that 
  \[
     \big\| D\bar{u} - D\bar{u}_{p,R} \big\|_{\ell^2}
     \leq C_p
     R^{- d/2 - p} \log^{p+1} R.
  \]
\end{theorem}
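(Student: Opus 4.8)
The plan is to follow the generic Galerkin approximation argument of \cite{EOS2016} essentially verbatim, the only genuinely new ingredient being the sharpened decay of the core remainder $r_{p+1}$ furnished by Theorems~\ref{thm:pointdef} and~\ref{thm:screw}. First I would invoke the strong stability hypothesis together with the inverse function theorem: since $\delta^2\mathcal{E}(\bar{u})[v,v] \geq c_0 \|Dv\|^2$ holds on all of $\HH(\Lambda)$, it holds a fortiori on the trial space associated with $\mathcal{U}_R^{(p)}$, giving uniform coercivity with an $R$-independent constant. This yields, for $R$ sufficiently large, both the existence of a solution $\bar{u}_{p,R}$ of \eqref{eq:galerkin} and the C\'ea-type quasi-optimality estimate
\[
\big\| D\bar{u} - D\hat{u}_p - D\bar{v}_R \big\|_{\ell^2}
\leq C \inf_{v_R \in \mathcal{U}_R^{(p)}}
\big\| D\bar{u} - D\hat{u}_p - Dv_R \big\|_{\ell^2},
\]
where for energy minima one cites \cite[Thm.~2]{EOS2016} directly and at saddle points one substitutes the analysis of \cite{BDO20}.

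Next I would estimate the best-approximation error on the right by inserting into $v_R$ the \emph{exact} multipole coefficients $b^{(i,k)}$ from the representation of $\bar{u}$ in Theorems~\ref{thm:pointdef}/\ref{thm:screw}. Because $\hat{u}_p = \sum_{i=0}^p u_i^{\rm C}$ already collects all continuum correctors, the continuum and multipole contributions cancel identically and one is left with
\[
D\bar{u} - D\hat{u}_p - Dv_R = Dr_{p+1} - Dw_R ,
\]
where $w_R$ is the free, compactly supported part of $v_R$. Thus it remains to bound $\inf_{w_R}\|Dr_{p+1} - Dw_R\|_{\ell^2}$ over $w_R$ with $\supp w_R \subset \Lambda \cap B_R$, a pure truncation problem for the core remainder.

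For this final step I would use the compactly supported truncation construction of \cite[Thm.~2]{EOS2016}: taking $w_R := \eta_R (r_{p+1} - a_R)$ with $\eta_R$ a cutoff equal to $1$ on $B_{R/2}$, vanishing outside $B_R$, and $a_R$ the average of $r_{p+1}$ over the annulus $B_R \setminus B_{R/2}$, one has $w_R \in \calW_R$ and $D(r_{p+1}-w_R)$ splits into an exterior tail, where $Dr_{p+1}$ appears directly, and a cutoff contribution supported on the annulus. The latter is handled by a discrete Poincar\'e inequality, trading the factor $|D\eta_R| \lesssim R^{-1}$ against a factor $R$ to bound it by $\|Dr_{p+1}\|_{\ell^2(B_R\setminus B_{R/2})}$, i.e. of the same order as the tail. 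Both are then controlled by $|D^j r_{p+1}(\ell)| \lesssim \abs{\ell}^{1-d-j-p}\log^{p+1}\abs{\ell}$ (recall $d=2$ in the screw case); converting the resulting sum to a radial integral gives
\[
\sum_{|\ell| \gtrsim R} \big|Dr_{p+1}(\ell)\big|^2
\lesssim \int_R^\infty \rho^{-2d-2p}\log^{2(p+1)}\rho \,\cdot\, \rho^{d-1}\, d\rho
\lesssim R^{-d-2p}\log^{2(p+1)} R ,
\]
and taking the square root produces exactly the claimed rate $R^{-d/2-p}\log^{p+1}R$; feeding this back through the quasi-optimality estimate completes the proof.

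The truncation estimate itself is routine once the sharp decay of $r_{p+1}$ is available, so the main obstacle is really the first step: making the existence-and-quasi-optimality argument fully rigorous in the enlarged space $\mathcal{U}_R^{(p)}$. One must check that adjoining the finitely many multipole degrees of freedom $b^{(i,k)}$ --- all of whose basis functions $D^i_{\mathcal{S}}\mathcal{G}_k$ lie in $\HH$ by the Green's-function decay \eqref{eq:decay-Gr} --- neither destroys the uniform coercivity inherited from $\delta^2\mathcal{E}(\bar{u})$ nor the applicability of the inverse function theorem with constants independent of $R$. Granting this, which is natural precisely because the added directions form a fixed finite-dimensional subspace of $\HH$, the decay rate of $r_{p+1}$ passes directly through C\'ea's lemma to give the stated bound.
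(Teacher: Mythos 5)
Your proposal follows essentially the same route as the paper: the generic C\'{e}a/inverse-function-theorem argument of \cite{EOS2016} for existence and quasi-optimality on $\mathcal{U}_R^{(p)}$, insertion of the exact multipole tensors $b^{(i,k)}$ so that only $Dr_{p+1}-Dw_R$ remains, and a cutoff-truncation of $r_{p+1}$ (as in \cite[Thm.~2]{EOS2016}) whose tail sum gives $R^{-d/2-p}\log^{p+1}R$. The only difference is that you spell out the truncation construction and the radial-integral computation which the paper delegates to the citation; both are correct.
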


\begin{remark}
  The scheme \eqref{eq:galerkin} cannot be implemented as is since the energy difference functional cannot be evaluated for a displacement with infinite range. However, this highly idealised scheme is of immense theoretical value in that it highlight what could potentially be achieved if this challenge can  be overcome. Any practical scheme will necessarily have to engage in the approximate evaluation of the multipole tensors $b^{(i)}$, for which there are several promising possibilities that we will explore in separate works. 
  
  A second challenge for practical implementations is the fast and accurate evaluation of the higher-order far-field predictor $\hat{u}_p$. All of these approximations require suitable controlled approximation to $\mathcal{E}$, somewhat analogous to quadrature rules or other kinds of variational crimes in the classical numerical analysis context. 
\end{remark}

\section{Conclusions and Outlook}
\label{sec:conclusions}
The main result of the present paper is the fact that the elastic field surrounding a defect in a crystalline solid may be represented to within arbitrary accuracy with three ``low-dimensional'' ingredients: 
\begin{enumerate}
  \item a series of continuum fields specified through PDEs; 
  \item a series of multipole moments; and
  \item a highly localised discrete core correction.
\end{enumerate}
More specifically, we have shown that by increasing the accuracy of components (1) and (2), the core correction (3) becomes increasingly local. While there is a certain amount of interaction between the components (1) and (2), there is no coupled problem that needs to be solved at any point. Indeed, both series are obtained sequentially order by order and the PDE defining the term of a given order in (1) only depends on lower-order terms of the multipole expansion, but not on the multipole term of the same order.

Our presentation here is restricted to simple lattices and a limited class of defects. Generalisations do require additional technical difficulties to be overcome, but there appears to be no fundamental limitation to extend the method and the results to multi-lattices and a range of other defects in some form.
 To conclude, we briefly discuss some of these possibilities and limitations.

\begin{itemize} 
  \item {\it Edge and mixed dislocations:} Edge and mixed dislocations are technically more challenging as they create a mismatch that affects the two-dimensional reference lattice. To leading order it suffices to correct the CLE solution with an ad-hoc transformation $u_0 = u_{\rm CLE} \circ \xi^{-1}$, see \cite{EOS2016}, though the analysis also becomes a bit more technical still. For higher orders however more care needs to be taken, not just in the choice of $\xi$ but also in the effect such transformations have on the PDEs. Furthermore, many arguments have additional technical complications due to the need of slip operators to describe the elastic strain.
  %
  \item {\it Cracks: } The full extension of our results to crack geometries appears to be considerably more challenging, as the homogeneous lattice is no longer a particularly good global reference. Thus already the discussion of Green's function is significantly more involved~\cite{BHO19}. Formally, we still expect our overall strategy to apply and it is interesting to note that due to different orders of decay the first higher order corrector is already needed to even define the model in the first place rather than ``just'' improve on it. 
  \item {\it Energy differences for defect transitions: } The precise characterisation of the far-field strain in terms of defect continuum fields and a multipole expansion suggests that some level of cancellation in energy differences, e.g. between a saddle point and energy minimum as observed in \cite{BDO20}, could be precisely tracked and characterised. Moreover, such results may then also explain improved convergence rates of numerical schemes for energy differences that are often seen in practice.
  \item {\it Convergence of numerical schemes: } A consequence of our analysis, with direct practical value is the construction of improved approximate cell problems that leverage the explicit low-dimensional structure of defect fields that we identified. We have given a hint at how this might be achieved in Theorem~\ref{th:galerkin} but much additional work is needed to formulate practical schemes along these lines. 

	The same line of work can also lead to robust new numerical schemes and analysis of existing schemes for the defect dipole tensor specifically (also called the elastic dipole tensor). Such schemes are of important and ongoing interest in defect physics (e.g., \cite{nazarov16}, \cite{DM18}). In particular, our approach to these terms developed here naturally includes the anisotropic case as well as extensions to higher multipole tensors.
\item {\it Higher-order dislocation dynamics models: } A further consequence is that we hope that the expansion of the far-field strain we have obtained here allows us to go beyond traditional dislocation dynamics approaches, which rely upon the leading-order CLE description of these defects. 
  By using the structure of our expansion to studying the effect of applied stress fields on defect cores, we can provide more detailed atomistic input into such models. This suggests a route to better connect dislocation dynamics and atomistic approaches, bridging the scale and language gaps between these two simulation methodologies.

  \item {\it Dynamics and statistical mechanics: } Statistical mechanics models, such as free energies or transition rates could in principle benefit from an analysis within our new framework. For example, in the harmonic approximation, the analysis of \cite{BDO20} could be taken as a starting point. It is far less clear whether more nonlinear models could also benefit, and it appears certain that finding similar coarse-grained descriptions of full dynamics of a crystalline far-field would require very different ideas.
\end{itemize}

\section{Proofs - Decay Estimates}
\label{sec:proofs-decay}

In this section we want to prove Theorem \ref{thm:structure} and Theorem \ref{thm:structurewithmoments}. But first, let us cite the following lemma.

\begin{lemma}[Conversion to divergence form] \label{divlemma}
Let $\alpha \in \R$, $q > d$, and $f\,:\,\La \to \R^N$ such that $|f(\ell)| \leq C_f \abs{\ell}^{-q} \log^{\alpha}  \abs{\ell}$ for all $\ell \in \La$ and $\sum_{\ell\in\La}f(\ell) = 0$. Then there exists $g\,:\,\La \to \R^{N} \otimes \R^{\mathcal{S}}$ and a constant $C$ independent of $f$ and $\ell$ such that $f = -{\rm Div}_{\mathcal{S}} g$ and $ \lvert g(\ell) \rvert \leq CC_f\abs{\ell}^{-q+1} \log^{\alpha}  \abs{\ell}$ for all $\ell\in\La$.
\end{lemma}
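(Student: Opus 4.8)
The plan is to reduce to the standard lattice and then build $g$ by a scale-by-scale (Bogovskii-type) construction that gains exactly one order of decay, handling the ``net charge'' at each scale separately. First I would apply the linear change of variables sending $\mathcal{S}$ to the canonical basis of $\R^d$; this identifies $\La$ with $\Z^d$, turns $\Div_{\mathcal{S}}$ into the standard discrete divergence $\Div g(\ell)=\sum_{i=1}^d\big(g_i(\ell)-g_i(\ell-e_i)\big)$, and changes $\abs{\ell}$ only by a fixed multiplicative constant, so neither the hypothesis nor the conclusion is affected. It then suffices to solve $f=\sum_{i}\big(g_i(\cdot-e_i)-g_i\big)$ on $\Z^d$ with the stated bound.

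Next I would decompose $f$ dyadically. Let $A_j:=\{\ell:2^{j}\le\abs{\ell}<2^{j+1}\}$, $f_j:=f\,\mathbf{1}_{A_j}$, and let $\omega_j\ge0$ be a fixed bump supported in $A_j$ with $\sum_\ell\omega_j=1$; set $c_j:=\sum_{\ell\in A_j}f(\ell)$. Writing
\[
 f=\sum_j\big(f_j-c_j\omega_j\big)+\sum_j c_j\omega_j=:\sum_j\tilde f_j+\mu,
\]
each piece $\tilde f_j$ is supported in $A_j$ and has zero mean, while $\mu$ is a superposition of ``monopoles'' $c_j$ whose total charge vanishes, $\sum_j c_j=\sum_\ell f(\ell)=0$, by hypothesis. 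The decay hypothesis gives $\|\tilde f_j\|_\infty\lesssim 2^{-jq}\log^\alpha 2^{j}$ and $|c_j|\le\sum_{A_j}|f|\lesssim 2^{-j(q-d)}\log^\alpha 2^{j}$; here the logarithmic factor is slowly varying and may be treated as constant on each annulus, which is why no extra logarithm is produced (the relevant geometric series converge precisely because $q>d$ strictly).

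For the zero-mean pieces I would invoke a local solution operator: on the rescaled annulus $A_j$ (all of which are dilates of the fixed annulus $A_0$) a discrete analogue of Bogovskii's operator produces $g_j$ supported in a fixed thickening of $A_j$ with $\Div g_j=\tilde f_j$ and $\|g_j\|_\infty\lesssim \diam(A_j)\,\|\tilde f_j\|_\infty\lesssim 2^{-j(q-1)}\log^\alpha 2^j$; the gain of exactly one factor of the diameter is the scale-invariant content of Bogovskii's inequality (equivalently, the local solution kernel is summable with mass $\sim 2^{j}$). Since only $O(1)$ of the supports overlap at any given $\ell$, the sum $g^{(1)}:=\sum_j g_j$ converges and obeys $|g^{(1)}(\ell)|\lesssim\abs{\ell}^{-q+1}\log^\alpha\abs{\ell}$. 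For the monopole part $\mu$ I would write down an explicit radial flux, the lattice analogue of $x\,|x|^{-d}Q(|x|)$ with $Q(r)$ the net charge inside radius $r$; since $\sum_j c_j=0$ one has $|Q(r)|\lesssim r^{-(q-d)}\log^\alpha r$, and dividing by the $(d-1)$-dimensional sphere factor yields $|g^{(2)}(\ell)|\lesssim\abs{\ell}^{-(q-1)}\log^\alpha\abs{\ell}$. Setting $g=g^{(1)}+g^{(2)}$ and transforming back gives the claim.

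I expect the main obstacle to be the two estimates that must be \emph{sharp in the scale}: the local Bogovskii bound with constant linear in the diameter (and uniform across dyadic scales after rescaling), and the rigorous discretisation of the radial flux, so that its discrete divergence reproduces $\mu$ exactly while retaining the $|Q(r)|/r^{d-1}$ magnitude. Both are the places where the zero-mean hypothesis is genuinely used, and where a crude estimate would only give the non-sharp rate $\abs{\ell}^{-d}$ (as a single global convolution against the order-one Riesz kernel $\abs{\cdot}^{1-d}$ does, whose near-field contribution caps the decay). An alternative route is to diagonalise $\Div_{\mathcal{S}}$ by the Fourier transform on the torus and set $\hat g=m\,\hat f$ with an explicit symbol $m$ that is smooth away from $0$ and homogeneous of degree $-1$ near $0$; the conclusion then follows from the decay of $g$ being governed by the singularity of $m\hat f$ at the origin together with $\hat f(0)=0$, but extracting the sharp algebraic rate and the exact power of the logarithm from this singularity analysis is itself the crux, and is arguably less transparent than the dyadic construction.
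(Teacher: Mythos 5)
Your proposal is correct in substance, but it is not the paper's argument: the paper gives no construction at all, proving the lemma in two sentences by citing \cite[Corollary 1]{EOS2016} for $\alpha=0$ and observing that the logarithmic factors simply thread through the weighted norms used in that proof. The construction behind the cited result --- which this paper adapts to the continuum in the proof of Proposition~\ref{prop:contestimates} --- is a self-similar transport argument: the mass distribution is repeatedly contracted towards the origin by a fixed factor ($f_{n+1}(\ell):=3^d f_n(3\ell)$ in the continuum version), each contraction step is realised as a coordinate-wise one-dimensional flux ($f_{n+1}=f_n+\divo g_n$), and one sets $g:=\sum_n g_n$; in the discrete setting the hypothesis $\sum_\ell f(\ell)=0$ is what makes the accumulated flux crossing each scale summable, i.e.\ prevents a residual point charge at the origin. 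Your route is genuinely different: a dyadic decomposition into mean-free annular pieces plus a monopole superposition, a local Bogovskii solve with the diameter-sharp bound for the former, and an explicit radial flux controlled by the cumulative charge $Q(r)$ for the latter, with the zero-sum hypothesis entering only through $|Q(r)|\lesssim r^{-(q-d)}\log^{\alpha} r$. Both arguments are scale-by-scale and use the vanishing sum for the same purpose, but yours separates the two mechanisms (local redistribution, which costs exactly one factor of the scale, versus global charge transport) more transparently, and it makes evident why the logarithm is inert --- precisely the point the paper asserts without proof. The two gaps you flag are real but elementary to fill: the discrete Bogovskii bound $\|g_j\|_\infty\lesssim \diam(A_j)\,\|\tilde f_j\|_\infty$ follows either from slice-by-slice coordinate transport on boxes (induction on dimension, correcting slice means) or by solving the continuum problem and converting to a lattice field via face-flux integrals, and the same face-flux discretisation makes the radial field reproduce $\mu$ exactly provided the discrete bumps $\omega_j$ are defined as cell averages of continuum ones. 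Since the results downstream (Proposition~\ref{divlemmahigherorder} and the proof of Theorem~\ref{thm:structure}) use the lemma only as a black box, your proof could be substituted without any further changes; what the paper's citation buys is brevity and consistency with the weighted-norm machinery it reuses in the continuum setting.
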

\begin{proof}
For $\alpha =0$ this is \cite[Corollary 1]{EOS2016}. However the addition of logarithmic terms is trivial. Indeed, one can construct $g$ in the exact same way and can easily carry the logarithmic term through by including them in the weighted norms used in the proof.
\end{proof}

The decay estimate for the lowest order found in \cite{EOS2016} is indeed based on Lemma \ref{divlemma} which then allows for a partial summation in the Green's function representation sum of the remainder given in equation \eqref{greensfunction}. The key idea for the higher order decay estimates is that Lemma \ref{divlemma} can actually be extended to higher orders based on vanishing higher order moments
\[
\mathcal{I}_j = \sum_{\ell \in \La} f(\ell) \otimes \ell^{\otimes j},
\]
as long as one only tries to write symmetric parts in divergence form; see Proposition \ref{divlemmahigherorder} below. We will then use this higher order divergence form with more precise higher order partial summation in specific parts of the lattice in the Green's function representation sum of the remainder given in equation \eqref{greensfunction} to arrive at the new decay estimates. 
We begin by establishing two further auxiliary results.

In the following, we will use $\boldsymbol{\rho} = (\rho_1, ..., \rho_j) \in \mathcal{R}^j\subset(\R^d)^j$, $j \geq 1$, and analogously for $\boldsymbol{\sigma}$. Also recall that $D_{\boldsymbol{\rho}} = D_{\rho_1} ... D_{\rho_j}$. 

\begin{lemma} \label{symmetrictensors}
Given a linearly independent set of vectors $\mathcal S \subset \R^d$ (which must necessarily have $\#\mathcal S=k\leq d$), the set of tensors $\{ \boldsymbol{\sigma}^\odot \colon \boldsymbol{\sigma} \in \mathcal{S}^k\}$ is also linearly independent in $(\R^d)^{\otimes k}$. Furthermore, $\boldsymbol{\sigma}^\odot = \boldsymbol{\rho}^\odot$ with $\boldsymbol{\sigma}, \boldsymbol{\rho} \in \mathcal{S}^k$, if and only if $\boldsymbol{\rho}  = \pi (\boldsymbol{\sigma})$ for some permutation $\pi \in S_k$.
\end{lemma}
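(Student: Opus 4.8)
The plan is to reduce both claims to the linear independence of monomials by pairing each symmetric tensor against a ``test direction'' $x^{\otimes k}$. First I would record the elementary identity that, for any $x \in \R^d$,
\[
\boldsymbol{\sigma}^\odot : x^{\otimes k} = \boldsymbol{\sigma}^\otimes : x^{\otimes k} = \prod_{m=1}^k (\sigma^{(m)} \cdot x),
\]
which holds because $x^{\otimes k}$ is itself symmetric, so the symmetrisation $\sym$ in $\boldsymbol{\sigma}^\odot$ may be dropped when pairing against it (as $\sym$ is the self-adjoint orthogonal projection onto $(\R^d)^{\odot k}$). Thus pairing with $x^{\otimes k}$ converts the tensor $\boldsymbol{\sigma}^\odot$ into a product of the linear forms $x \mapsto \sigma^{(m)} \cdot x$.

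Next I would exploit the linear independence of $\mathcal{S}=\{s_1,\dots,s_k\}$. The linear map $\Phi \colon \R^d \to \R^k$, $\Phi(x) := (s_1 \cdot x, \dots, s_k \cdot x)$, has as its matrix the rows $s_1,\dots,s_k$, so it has rank $k$ precisely because these vectors are linearly independent; since $k \leq d$ this means $\Phi$ is surjective onto $\R^k$. Because each entry $\sigma^{(m)}$ of a tuple $\boldsymbol{\sigma} \in \mathcal{S}^k$ equals some $s_i$, and because $\boldsymbol{\sigma}^\odot$ is invariant under permutations of the tuple, the distinct tensors in $\{\boldsymbol{\sigma}^\odot\}$ are indexed by multiplicity vectors $\mathbf{a}=(a_1,\dots,a_k) \in \N_0^k$ with $|\mathbf{a}|=k$; writing $y=\Phi(x)$, the identity above reads $\boldsymbol{\sigma}^\odot : x^{\otimes k} = y^{\mathbf a}$, where $y^{\mathbf a} := \prod_i y_i^{a_i}$.

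Now I would prove the linear independence directly. Suppose $\sum_{\mathbf a} c_{\mathbf a}\, \boldsymbol{\sigma}_{\mathbf a}^\odot = 0$ in $(\R^d)^{\otimes k}$, where $\boldsymbol{\sigma}_{\mathbf a}$ denotes any tuple with multiplicity profile $\mathbf a$. Pairing with $x^{\otimes k}$ and using the surjectivity of $\Phi$ gives $\sum_{\mathbf a} c_{\mathbf a}\, y^{\mathbf a}=0$ for every $y \in \R^k$, i.e. the identical vanishing of a polynomial in $k$ variables. Since distinct monomials are linearly independent as functions on $\R^k$, all $c_{\mathbf a}=0$, which is the asserted independence. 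The ``furthermore'' clause then follows at once: one implication is trivial since $\sym$ is permutation-invariant, and for the converse, $\boldsymbol{\sigma}^\odot=\boldsymbol{\rho}^\odot$ with $\boldsymbol{\sigma},\boldsymbol{\rho}\in\mathcal{S}^k$ forces them to share the same multiplicity vector $\mathbf a$ (otherwise the independence just proved would be violated), which is exactly the statement that $\boldsymbol{\rho}=\pi(\boldsymbol{\sigma})$ for some $\pi \in S_k$.

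I do not expect a serious obstacle; the single point requiring care is to argue that $\Phi$ is surjective, so that the polynomial identity, a priori valid only on the image $\Phi(\R^d)$, in fact holds on all of $\R^k$ and lets me invoke the independence of monomials. An alternative and essentially equivalent route would be to extend $\mathcal{S}$ to a basis of $\R^d$ and quote the standard fact that the symmetric products of basis vectors form a basis of $(\R^d)^{\odot k}$, of which $\{\boldsymbol{\sigma}^\odot\}$ is then a sub-collection; but the evaluation argument above is self-contained and delivers the second claim with no extra work.
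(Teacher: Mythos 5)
Your proof is correct, but it takes a genuinely different route from the paper's. The paper's argument is an orthogonality argument: it constructs a scalar product $(\cdot,\cdot)_{\mathcal S}$ on $\R^d$ for which $\mathcal S$ is part of an orthonormal basis, extends it multilinearly to $(\R^d)^{\otimes k}$, and computes
\[
(\boldsymbol{\sigma}^\odot,\boldsymbol{\rho}^\odot)_{\mathcal S}
=\frac{1}{k!}\sum_{\pi\in S_k}\prod_{i=1}^k\big(\sigma^{(i)},\rho^{(\pi(i))}\big)_{\mathcal S},
\]
from which it reads off that two such symmetric tensors are either identical (exactly when the tuples are permutations of one another) or mutually orthogonal; since each is non-zero, both the linear independence and the ``furthermore'' clause fall out of this single computation. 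You instead dualize against the rank-one symmetric tensors $x^{\otimes k}$, converting each $\boldsymbol{\sigma}^\odot$ into a monomial $y^{\mathbf{a}}$ in the variables $y=\Phi(x)$, and you use the linear independence of $\mathcal S$ not to build an inner product but to guarantee that the evaluation map $\Phi$ is surjective, so that the vanishing of $\sum_{\mathbf{a}}c_{\mathbf{a}}y^{\mathbf{a}}$ holds on all of $\R^k$ and the independence of monomials applies. What each approach buys: the paper gets the equal-or-orthogonal dichotomy for free, so the permutation characterisation is immediate, whereas you recover it by a short (but extra) contradiction step; your route, on the other hand, reduces everything to the completely standard fact that distinct monomials are independent as functions, and it makes explicit the polarization-type principle that symmetric tensors are determined by their pairings with the diagonal family $\{x^{\otimes k}\}$, which the paper's proof does not exhibit. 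Both arguments are elementary, self-contained, and complete; your identification of the surjectivity of $\Phi$ as the one point needing care is exactly right.
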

\begin{proof}
  Although the proof is straightforward, and likely well-known, we present it for the sake of convenience: Define a scalar product $(\cdot, \cdot)_{\mathcal S}$ on $\R^d$ for which $\mathcal S$ forms part of an orthonormal basis. This induces a scalar product on the space of tensors by multi-linear extension of $(\sigma^\otimes, \rho^\otimes)_{\mathcal S} := \prod_{j=1}^k (\sigma^{(j)}, \rho^{(j)})_{\mathcal S}$.

  Consider the scalar product
  \begin{align*}
    (\boldsymbol{\sigma}^\odot, \boldsymbol{\rho}^\odot)_{\mathcal S}
    &=\frac{1}{(k!)^2}\sum_{\pi',\pi''\in S_k}\left( \pi'(\boldsymbol{\sigma})^\otimes,\pi''(\boldsymbol{\rho})^\otimes\right)_{\mathcal S}\\
    &=\frac{1}{(k!)^2}\sum_{\pi',\pi''\in S_k}\prod_{i=1}^k\left( \sigma^{(\pi'(i))},\rho^{(\pi''(i))}\right)_{\mathcal S}\\
    &=\frac{1}{k!}\sum_{\pi\in S_k}\prod_{i=1}^k\left( \sigma^{(i)},\rho^{(\pi(i))}\right)_{\mathcal S}.
  \end{align*}
  If $\boldsymbol{\rho}\neq \pi(\boldsymbol{\sigma})$ for all $\pi\in S_k$, then for each $\pi\in S_k$, there exists an index $i$ such that $\sigma^{(i)}\neq \rho^{(\pi(i))}$, and consequently $(\sigma^{(i)},\rho^{(\pi(i))})_{\mathcal S}=0$. This entails that each of the products summed in the expression above is zero. Since $\boldsymbol{\rho}^\odot$ and $\boldsymbol{\sigma}^\odot$ are both non-zero and orthogonal in this inner product, they cannot be equal.

  The same argument entails that if $\boldsymbol{\sigma}^\odot = \boldsymbol{\rho}^\odot$, there must exist $\pi\in S_k$ such that $\boldsymbol{\rho}= \pi(\boldsymbol{\sigma})$. On the other hand, if $\boldsymbol{\rho}= \pi(\boldsymbol{\sigma})$, we see that
  \[
    \boldsymbol{\rho}^\odot=\pi(\boldsymbol{\sigma})^\odot=\frac{1}{k!}\sum_{\pi'\in S_k}\sigma^{(\pi'\pi(i))}\frac{1}{k!}\sum_{\pi''\in S_k}\sigma^{(\pi''(i))} = \boldsymbol{\sigma}^\odot.
  \]
  We deduce that $\boldsymbol{\sigma}^\odot$ and $\boldsymbol{\rho}^\odot$ are either identical (if and only if $\boldsymbol{\sigma}$ is a permutation of $\boldsymbol{\rho}$) or mutually orthogonal in the $\mathcal{S}$-inner product, which implies the stated result.
\end{proof}

\begin{lemma} \label{th:Diffpellp_identity}
  Let $\boldsymbol{\sigma} \in \Lambda^p$, then 
  \[
      D_{\boldsymbol{\sigma}} \ell^{\otimes p} = 
      p! \boldsymbol{\sigma}^\odot.
  \]
\end{lemma}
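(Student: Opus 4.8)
The plan is to reduce the iterated difference to a single combinatorial sum and then show, by inclusion--exclusion, that every term still depending on $\ell$ cancels, leaving exactly the fully symmetrised tensor.

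First I would record the standard expansion of a composition of first differences: for any $f$ and $\boldsymbol{\sigma}=(\sigma^{(1)},\dots,\sigma^{(p)})$,
\[
D_{\boldsymbol{\sigma}} f(\ell) = \sum_{S \subseteq \{1,\dots,p\}} (-1)^{p-|S|} f\Big(\ell + \textstyle\sum_{j\in S}\sigma^{(j)}\Big),
\]
which follows immediately by induction on $p$ from $D_\rho f(\ell)=f(\ell+\rho)-f(\ell)$. Applying this to $f(\ell)=\ell^{\otimes p}$ and abbreviating $v_S:=\sum_{j\in S}\sigma^{(j)}$, I would then expand each tensor power $(\ell+v_S)^{\otimes p}$ over the subset $T\subseteq\{1,\dots,p\}$ of tensor slots receiving $v_S$ (the remaining slots receiving $\ell$), and exchange the order of the sums over $S$ and $T$.

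The heart of the argument is the cancellation for each fixed slot-set $T$ with $|T|=t$. There the contribution is, up to the fixed $\ell$-factors in the slots outside $T$, the sum $\sum_{S}(-1)^{p-|S|} v_S^{\otimes t}$ placed in the slots of $T$. Expanding $v_S^{\otimes t}=\sum_{(j_1,\dots,j_t)\in S^t}\sigma^{(j_1)}\otimes\cdots\otimes\sigma^{(j_t)}$ and collecting by the index tuple, each tuple with underlying index set $J$ carries the scalar weight $\sum_{S\supseteq J}(-1)^{p-|S|}=(-1)^{p-|J|}\sum_{S'\subseteq\{1,\dots,p\}\setminus J}(-1)^{|S'|}$, which vanishes unless $J=\{1,\dots,p\}$ by the elementary identity $\sum_{S'\subseteq U}(-1)^{|S'|}=0$ for $U\neq\emptyset$. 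Since $|J|\le t\le p$, this forces $t=p$ (so $T=\{1,\dots,p\}$ and no $\ell$ survives) and $(j_1,\dots,j_p)$ a permutation of $(1,\dots,p)$. Hence only the fully-$v_S$ slot-set contributes, and what remains is exactly $\sum_{\pi\in S_p}\sigma^{(\pi(1))}\otimes\cdots\otimes\sigma^{(\pi(p))}=p!\,\boldsymbol{\sigma}^\odot$.

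I expect no genuine obstacle here: the result is a purely formal finite-difference identity, and the only real content is the inclusion--exclusion cancellation above. The main care needed is bookkeeping --- keeping the sum over value-subsets $S$, the sum over tensor-slot-subsets $T$, and the sum over index tuples distinct. An alternative, essentially equivalent route is to note that $\ell\mapsto\ell^{\otimes p}$ is a homogeneous polynomial of degree $p$, so its iterated first difference coincides with the iterated directional derivative $\partial_{\sigma^{(1)}}\cdots\partial_{\sigma^{(p)}}\ell^{\otimes p}$, which is constant and equals $p!\,\boldsymbol{\sigma}^\odot$ by the product rule; however, justifying ``differences equal derivatives'' rigorously amounts to the same computation, so I would present the direct version.
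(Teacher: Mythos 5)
Your proof is correct, but it takes a genuinely different route from the paper's. The paper argues by degree reduction and induction: a single difference extracts the symmetrised leading term, $D_\rho \ell^{\otimes j} = j\,\ell^{\odot(j-1)}\odot\rho + r(\ell)$ with $r$ a polynomial of degree at most $j-2$, and the remaining $j-1$ differences annihilate any such low-degree polynomial; iterating gives the claim in a few lines. You instead expand the full iterated difference in closed form via inclusion--exclusion, $D_{\boldsymbol{\sigma}}f(\ell)=\sum_{S}(-1)^{p-|S|}f(\ell+v_S)$, distribute the tensor power over slots, and show by the subset-sum identity $\sum_{S'\subseteq U}(-1)^{|S'|}=0$ for $U\neq\emptyset$ that every term whose index set misses some $\sigma^{(j)}$ --- in particular every term still carrying a factor $\ell$ --- cancels, leaving exactly $\sum_{\pi\in S_p}\pi(\boldsymbol{\sigma})^\otimes = p!\,\boldsymbol{\sigma}^\odot$; your weight computation $\sum_{S\supseteq J}(-1)^{p-|S|}=(-1)^{p-|J|}\sum_{S'\subseteq\{1,\dots,p\}\setminus J}(-1)^{|S'|}$ is accurate, and the forced conclusion $t=p$ with the tuple a permutation is right. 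The paper's argument is shorter and leans on a reusable principle (differences lower polynomial degree), while yours is non-inductive and fully explicit: all cancellation is exhibited at once, and the combinatorial origin of the factor $p!$ (one surviving term per permutation) is laid bare. Your closing observation that the ``differences equal derivatives on the top-degree part'' shortcut amounts to the same computation is, in effect, exactly the content of the paper's two observations.
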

\begin{proof}
  This identity follows from two observations: First,
  \[D_\rho \ell^{\otimes j} = j \ell^{\odot (j-1)} \odot \rho + r(\ell), \]
  for a polynomial $r$ in $\ell$ of degree at most $j-2$. Secondly,
  \[D_{\boldsymbol{\sigma}}^{j-1} r=0 \]
  for any such polynomial. A simple induction then shows the result.
\end{proof}

We can now turn towards to a crucial result converting a discrete force field into higher order divergence form.

We will slightly abuse notation in the following and let the symmetric part $\sym A$ of a tensor $A \in \R^{N} \otimes (\R^\mathcal{S})^{\otimes k}$ denote the symmetrical part in the later indices only, namely $\sym (A) := \frac{1}{k!}\sum_{\pi \in S_k} A_{\cdot, \pi(\boldsymbol{\sigma})}$. For higher order tensor fields we will follow the usual convention from the continuum that $\Div$ always applies to the last component.

\begin{proposition}[Conversion to higher order divergence form] \label{divlemmahigherorder}
  Let $\mathcal{S} \subset \La$ be linearly independent with $\spano_\Z \mathcal{S}=\La$. Also let
  $p, q \in \N$, $q+1-p > d$, $\alpha \in \R$, and $f^{(0)}\,:\,\La \to \R^N$ such that $|f^{(0)}(\ell)| \leq C_f\abs{\ell}^{-q} \log^{\alpha} \abs{\ell}$ for all $\ell \in \La$. Also assume $p$ moments vanish, i.e., $\mathcal{I}_j=0$ for $0 \leq j \leq p-1$. Then there exist $f^{(k)}\,:\,\La \to \R^{N} \otimes (\R^\mathcal{S})^{\otimes k}$, $1 \leq k \leq p$ and a constant $C$ such that 
  \[
      \sym f^{(k-1)} = -\Div_{\mathcal{S}} f^{(k)}
    \] 
    and
$ \lvert f^{(k)}(\ell) \rvert \leq CC_f \abs{\ell}^{-q+k} \log^{\alpha} \abs{\ell}$ for all $\ell \in\La$ and all $1 \leq k \leq p$.
\end{proposition}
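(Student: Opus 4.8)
The plan is to argue by induction on $k$, constructing $f^{(k)}$ from $f^{(k-1)}$ by a single application of the first-order conversion Lemma~\ref{divlemma}, applied componentwise in the fixed, finite-dimensional target space $\R^N\otimes(\R^{\mathcal{S}})^{\odot(k-1)}$. Concretely, at step $k$ I regard $\sym f^{(k-1)}$ as an $\R^N\otimes(\R^{\mathcal{S}})^{\odot(k-1)}$-valued lattice field and seek $f^{(k)}$ with $\sym f^{(k-1)} = -\Div_{\mathcal{S}} f^{(k)}$, where $\Div_{\mathcal{S}}$ contracts the new last index. The base case $k=1$ is clean: $\sym f^{(0)}=f^{(0)}$ and its total sum is $\mathcal{I}_0=0$ by hypothesis. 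The decay bound is inherited automatically at every step: Lemma~\ref{divlemma} requires a decay exponent strictly larger than $d$, and the inductive decay $\abs{\ell}^{-q+(k-1)}\log^\alpha\abs{\ell}$ has exponent $q-(k-1)\ge q-(p-1)>d$ by the hypothesis $q+1-p>d$ together with $k\le p$; the lemma then returns $f^{(k)}$ with exponent $q-k$ and the same power $\log^\alpha$, accumulating only a bounded constant factor over the $p$ steps.

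The only nontrivial hypothesis of Lemma~\ref{divlemma} that must be checked at each stage is the vanishing of the total sum $\sum_\ell \sym f^{(k-1)}(\ell)\in\R^N\otimes(\R^{\mathcal{S}})^{\odot(k-1)}$, and this is where the moment assumption enters. The key step is to establish, under the linear map $\iota\colon\R^{\mathcal{S}}\to\R^d,\ e_\rho\mapsto\rho$, the identity (with a nonzero combinatorial constant $c_{k-1}$)
\[
  \iota^{\otimes(k-1)}\Big(\sym\sum_\ell f^{(k-1)}(\ell)\Big) = c_{k-1}\,\sym\,\mathcal{I}_{k-1}[f^{(0)}].
\]
I would prove this by iterating the divergence relations $f^{(0)}=-\Div_{\mathcal{S}}f^{(1)}$ and $\sym f^{(j)}=-\Div_{\mathcal{S}}f^{(j+1)}$ inside the moment $\mathcal{I}_{k-1}[f^{(0)}]=\sum_\ell f^{(0)}(\ell)\otimes\ell^{\otimes(k-1)}$ and performing discrete summation by parts: each summation by parts turns a factor $-\Div_{\mathcal{S}}(\cdot)$ into a difference $D_\rho$ acting on the polynomial weight $\ell^{\otimes(k-1)}$, and after $k-1$ such steps Lemma~\ref{th:Diffpellp_identity} identifies the fully-differenced weight $D_{\boldsymbol{\rho}}\ell^{\otimes(k-1)}$ with the pure symmetric tensor $(k-1)!\,\boldsymbol{\rho}^\odot$, producing exactly the symmetrized sum on the left. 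Since $\mathcal{I}_{k-1}[f^{(0)}]=0$ for $k-1\le p-1$, the right-hand side vanishes; and because $\mathcal{S}$ is linearly independent, Lemma~\ref{symmetrictensors} guarantees that $\iota^{\otimes(k-1)}$ is injective on $(\R^{\mathcal{S}})^{\odot(k-1)}$, whence $\sym\sum_\ell f^{(k-1)}(\ell)=0$, which licenses the application of Lemma~\ref{divlemma} and yields $f^{(k)}$.

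The main obstacle is precisely the bookkeeping in this summation-by-parts/moment-lifting step. Two features make it delicate. First, at every intermediate stage only the symmetric part $\sym f^{(j)}$ is available in divergence form, so one must check that the non-symmetric parts do not contribute to the symmetric moments being tracked; this should hold because the weight $\ell^{\otimes(k-1)}$ and all its differences are symmetric, so only symmetric parts survive the contraction. Second, the finite difference $D_\rho\ell^{\otimes m}=(\ell+\rho)^{\otimes m}-\ell^{\otimes m}$ produces, besides its leading term, lower-order corrections carrying extra factors of $\rho$, which feed into strictly lower moments of the intermediate fields; one must verify these are consistently absorbed, either cancelling after symmetrization or contributing only to moments already known to vanish. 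I expect that organising this recursion cleanly — ideally by carrying the whole family of symmetrized moments $\sym\sum_\ell f^{(k)}(\ell)\otimes\ell^{\otimes j}$ as a strengthened induction hypothesis rather than only the total sum — will be the technical heart of the proof, with the exact difference identity of Lemma~\ref{th:Diffpellp_identity} and the linear independence of symmetric products in Lemma~\ref{symmetrictensors} doing the essential work.
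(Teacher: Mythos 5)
Your proposal is correct and follows essentially the same route as the paper: induction in which each step applies Lemma~\ref{divlemma} to $\sym f^{(k-1)}$, with the vanishing of $\sum_\ell \sym f^{(k-1)}(\ell)$ established by summation by parts against the weight $\ell^{\otimes(k-1)}$, then Lemma~\ref{th:Diffpellp_identity} and the linear independence from Lemma~\ref{symmetrictensors}. The paper's version of your key identity, $\mathcal{I}_p = \sum_\ell \sum_{\boldsymbol{\sigma}\in\mathcal{S}^j} f^{(j)}_{\cdot,\boldsymbol{\sigma}} \otimes D_{\boldsymbol{\sigma}}(\ell^{\otimes p})$ for $j=0,\dots,p$, keeps the weight only partially differenced at intermediate stages, which shows that the lower-order corrections you worry about never need separate tracking (so your contemplated strengthened induction hypothesis is unnecessary); otherwise the two arguments coincide.
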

\begin{proof}
$p=1$ is covered by Lemma~\ref{divlemma}. By induction, assume the statement is true for a $p \in \N$ and we now have $q-p>d$ as well as $p+1$ vanishing moments. In particular, we have already constructed the desired $f^{(0)}, \dots, f^{(p)}$. Now $|\sym f^{(p)}(\ell)| \lesssim \abs{\ell}^{-(q-p)}$ and $q-p>d$.

We claim that
\begin{equation}
  \label{eq:divlemmahigherorder_Ip_identity}
  \mathcal{I}_p = \sum_{\ell} \sum_{\boldsymbol{\sigma} \in \mathcal{S}^j} f^{(j)}_{\cdot, \boldsymbol{\sigma}} \otimes D_{\boldsymbol{\sigma}}( \ell^{\otimes p}) 
\end{equation}
for all $j=0, \dots, p$. This is clear for $j=0$. By induction, if it is true for $j<p$, then
\begin{align*}
\mathcal{I}_p &= \sum_{\ell} \sum_{\boldsymbol{\sigma} \in \mathcal{S}^j} f^{(j)}_{\cdot, \boldsymbol{\sigma}} \otimes D_{\boldsymbol{\sigma}}( \ell^{\otimes p})\\
&= \sum_{\ell} \sum_{\boldsymbol{\sigma} \in \mathcal{S}^j} \sym f^{(j)}_{\cdot, \boldsymbol{\sigma}} \otimes D_{\boldsymbol{\sigma}}( \ell^{\otimes p})\\
&= -\sum_{\ell} \sum_{\boldsymbol{\sigma} \in \mathcal{S}^j} (\Div_\mathcal{S} f^{(j+1)})_{\cdot, \boldsymbol{\sigma}} \otimes D_{\boldsymbol{\sigma}}( \ell^{\otimes p})\\
&= \sum_{\ell} \sum_{\boldsymbol{\sigma} \in \mathcal{S}^j, \rho \in \mathcal{S}} D_{-\rho} f^{(j+1)}_{\cdot, \boldsymbol{\sigma},\rho} \otimes D_{\boldsymbol{\sigma}}( \ell^{\otimes p})\\
&= \sum_{\ell} \sum_{\boldsymbol{\sigma} \in \mathcal{S}^j, \rho \in \mathcal{S}} f^{(j+1)}_{\cdot, \boldsymbol{\sigma},\rho} \otimes D_\rho D_{\boldsymbol{\sigma}}( \ell^{\otimes p})\\
&= \sum_{\ell} \sum_{\boldsymbol{\sigma} \in \mathcal{S}^{j+1}} f^{(j+1)}_{\cdot, \boldsymbol{\sigma}} \otimes D_{\boldsymbol{\sigma}}( \ell^{\otimes p}).
\end{align*}
Note that the decay of $f^{(j+1)}$ is needed not just for the sums to exist but also for the partial summation to be true.  This establishes \eqref{eq:divlemmahigherorder_Ip_identity}. 

Applying Lemma~\ref{th:Diffpellp_identity}, with $j = p$, and using that $\mathcal{I}_p=0$ we obtain 
\[0= \sum_{\ell} \sum_{\boldsymbol{\sigma} \in \mathcal{S}^p} f^{(p)}_{\cdot, \boldsymbol{\sigma}} \otimes \boldsymbol{\sigma}^\odot=\sum_{\ell} \sum_{\boldsymbol{\sigma} \in \mathcal{S}^p} (\sym f^{(p)})_{\cdot, \boldsymbol{\sigma}} \otimes \sym \boldsymbol{\sigma}^\otimes.\]
According to Lemma \ref{symmetrictensors}, the set of the tensors $\sym \boldsymbol{\sigma}^\otimes$ is linearly independent. Additionally, $\boldsymbol{\sigma}^\odot = \boldsymbol{\rho}^\odot$ implies $\sym f^{(p)}_{\cdot, \boldsymbol{\sigma}} = \sym f^{(p)}_{\cdot, \boldsymbol{\rho}}$ as $\boldsymbol{\rho}  = \pi (\boldsymbol{\sigma})$ for some permutation $\pi$. Therefore, $\sum_\ell \sym f^{(p)}(\ell) =0$ and we can apply Lemma~\ref{divlemma} to find $f^{(p+1)}$ with the desired properties.
\end{proof}

To prepare for the Proof of Theorem \ref{thm:structure}, we fix some $u \in \HH$ and write $f^{(0)} := H[u]$. We extend the lattice Green's function approach developed in \cite{EOS2016} to estimate $u$. The Green's function satisfies
\[
 \sum_{z\in\La}f^{(0)}(z)\G_k(\ell-z) = u_k(\ell)
\]
for all $u \in \HHc$. As the right hand side is not invariant under adding a constant to $u$, this cannot directly translate to general $u \in \HH$, but the situation looks better for derivatives.

\begin{lemma} \label{th:lemma_for_thm_structure}
  Let $u \in \HH$ and assume that $|f^{(0)}(\ell)| = \lvert H[u]\rvert \lesssim |\ell|^{-\gamma}_0$ for some $\gamma>1$. 
  Then, for all $\boldsymbol{\rho} = (\rho_1, ..., \rho_j) \in \mathcal{R}^j$, $j \geq 1$, we have  
  \begin{equation} \label{greensfunction}
    \sum_{z\in\La}f^{(0)}(z)D_{\boldsymbol{\rho}}\G_k(\ell-z) = D_{\boldsymbol{\rho}} u_k(\ell).
   \end{equation}   
\end{lemma}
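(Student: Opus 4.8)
The plan is to upgrade the Green's function representation that already holds on the dense subspace $\HHc$ to the differenced identity \eqref{greensfunction} for general $u \in \HH$ by a truncation argument. For $v \in \HHc$ the cited identity reads $v_k(\ell) = \sum_z H[v](z)\,\G_k(\ell - z)$, and since $v$ (hence $H[v]$) has bounded support the sum is finite, so applying the difference operator $D_{\boldsymbol\rho}$ in the variable $\ell$ termwise yields $D_{\boldsymbol\rho} v_k(\ell) = \sum_z H[v](z)\,D_{\boldsymbol\rho}\G_k(\ell - z)$. This is precisely \eqref{greensfunction} for compactly supported displacements, so the entire difficulty lies in the passage to the limit.

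First I would record that, for fixed $\ell$, the right-hand side of \eqref{greensfunction} converges absolutely for our $u$: combining the hypothesis $|H[u](z)| \lesssim \abs{z}^{-\gamma}$ with the estimate $|D_{\boldsymbol\rho}\G_k(\ell - z)| \lesssim \abs{\ell-z}^{2-d-j}$ from \eqref{eq:decay-Gr} (available because $j \geq 1$), the summand decays like $\abs{z}^{2-\gamma-d-j}$ as $|z| \to \infty$, which is summable precisely because $\gamma > 1 \geq 2-j$; the regularised bracket $\abs{\cdot}$ removes any difficulty near $z=\ell$. Denote this limit by $W_k(\ell)$, so the goal becomes $D_{\boldsymbol\rho}u_k(\ell) = W_k(\ell)$.

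The core step is a constant-corrected truncation. Let $\eta_R$ be a cut-off equal to $1$ on $B_R$, supported in $B_{2R}$, with $|D\eta_R| \lesssim R^{-1}$, and set $u_R := \eta_R(u - c_R) \in \HHc$, where $c_R$ is the average of $u$ over the annulus $B_{2R}\setminus B_R$ on which $D\eta_R$ is supported. Subtracting $c_R$ is the essential device: it alters neither $H[u]$ nor $D_{\boldsymbol\rho}u$, since both annihilate constants and $j \geq 1$, yet it lets me tame the boundary layer. For $R$ large one has $D_{\boldsymbol\rho}(u_R)_k(\ell) = D_{\boldsymbol\rho}u_k(\ell)$, so applying the $\HHc$ identity to $u_R$ reduces everything to $\sum_z H[u_R](z)\,D_{\boldsymbol\rho}\G_k(\ell-z) \to W_k(\ell)$. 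Because $H$ is local and kills constants, $H[u_R] = H[u]$ on $B_{R-r_{\mathcal R}}$ (with $r_{\mathcal R} := \max_{\rho\in\Rc}|\rho|$), so $H[u_R]-H[u]$ is supported in $\{|z| > R-r_{\mathcal R}\}$. I would then split the error into the far tail $\sum_{|z|>2R} H[u]\,D_{\boldsymbol\rho}\G_k$ and the annular piece $\sum_{\rm ann}H[u]\,D_{\boldsymbol\rho}\G_k$, both of which vanish as tails of the absolutely convergent series $W_k$, plus the genuinely new term $\sum_{\rm ann} H[u_R]\,D_{\boldsymbol\rho}\G_k$, which I estimate by Cauchy–Schwarz.

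For this last term the factor $\|D_{\boldsymbol\rho}\G_k(\ell-\cdot)\|_{\ell^2(B_{2R}\setminus B_R)}$ stays bounded uniformly in $R$ by \eqref{eq:decay-Gr} (it is $O(1)$ in the borderline case $d=2$, $j=1$ and tends to $0$ otherwise), while $\|H[u_R]\|_{\ell^2}\to 0$: writing schematically $Du_R = \eta_R\,Du + (D\eta_R)(u - c_R)$, the first summand is controlled by the tail $\|Du\|_{\ell^2(\{|z|>R-2r_{\mathcal R}\})}\to 0$, and the second by $R^{-1}\|u-c_R\|_{\ell^2({\rm ann})} \lesssim \|Du\|_{\ell^2({\rm ann})}\to 0$ via the discrete Poincaré inequality on the connected annulus, whose constant scales like its diameter $\sim R$. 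The main obstacle is exactly this boundary term: a general $u\in\HH$ is controlled only through $Du\in\ell^2$ and may itself grow, so a naive truncation generates uncontrolled residual forces; truncating the annulus-mean-corrected field $u-c_R$ and invoking Poincaré is what renders those forces negligible, and the single subtlety to watch is the borderline $\ell^2$-behaviour of $D_{\boldsymbol\rho}\G$ when $d=2$, $j=1$, which is harmless because only its annular mass ever enters the estimate.
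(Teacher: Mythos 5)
Your proposal is correct, but it follows a genuinely different route from the paper's proof. The paper argues by density and continuity: when $d+2j>4$, the kernel $D_{\boldsymbol{\rho}}\G_k(\ell-\cdot)$ lies in $\ell^2(\La)$, so both sides of \eqref{greensfunction} are continuous linear functionals on $\HH$ and the identity extends from $\HHc$ by soft functional analysis; only in the borderline case $d=2$, $j=1$ does it work harder, first rewriting the sum in the weak form $-\sum_{z}\nabla^2 V(0)[Du(z), D D_{\boldsymbol{\rho}}\G_k(\ell-z)]$ (continuous on $\HH$ because the extra difference on $\G$ restores square-summability), and then reconnecting that weak form to the pointwise sum under the decay hypothesis via a cutoff $\eta_R(z)$ placed \emph{inside the sum}, with the commutator term bounded by $R^{-1}\|Du\|_{\ell^2}\|D_{\boldsymbol{\rho}}\G(\ell-\cdot)\|_{\ell^2(B_{2R+C}\setminus B_{R-C})}\to 0$. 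You instead truncate the displacement itself, $u_R=\eta_R(u-c_R)$, and apply the compact-support representation to $u_R$; the price of this is that $D\eta_R$ multiplies $u-c_R$ rather than $Du$, which is exactly why you need the annular mean $c_R$ and a discrete Poincar\'e inequality with constant $\sim R$ to tame the spurious boundary forces --- an ingredient the paper's summation-by-parts trick avoids, since there the cutoff derivative lands directly against $Du$. What your approach buys is uniformity: a single argument covers all $d$ and $j$, the borderline case appearing only through the annular norm $\|D_{\boldsymbol{\rho}}\G_k(\ell-\cdot)\|_{\ell^2(B_{2R}\setminus B_R)}$ being $O(1)$ rather than $o(1)$, and the estimate of the new annular term is tight enough either way because the force factor tends to zero. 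Two points deserve explicit care in a write-up: state (or cite) the discrete Poincar\'e inequality on lattice annuli with $R$-independent scaling of the constant, including the harmless fattening by the interaction range; and note that the $\HHc$-identity you invoke is really an identity for \emph{genuinely compactly supported} fields (your $u_R$ qualifies), since a general element of $\HHc$ may tend to a nonzero constant at infinity, which is precisely the lack of translation-invariance the paper flags before the lemma.
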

\begin{proof}
Due to the decay assumption on $f^{(0)}$ the sum converges absolutely. Furthermore, for $u \in \HHc$ the statement is clearly true. The right hand side is a well-defined, continuous, linear functional on $\HH$. The result is straightforward when  $d \geq 3$ or $j \geq 2$ as in this case the left hand side is also a continuous, linear functional because $|D_{\boldsymbol{\rho}}\G_k| \in \ell^2(\La)$ if and only if $4 < d+2j$.

  To include the case where $d=2$ and $j=1$ we have to be a bit more careful. Note that, for $u \in \HHc$
  \[ \sum_{z\in\La}f^{(0)}(z)D_{\boldsymbol{\rho}}\G_k(\ell-z) = - \sum_{z\in\La} \nabla^2 V(0)[Du(z), D D_{\boldsymbol{\rho}}\G_k(\ell-z)]. \]
  As the right hand side is now a well-defined, continuous, linear functional on $\HH$, we find for all $u \in \HH$, that 
  \begin{equation} \label{greensfunction2}
   - \sum_{z\in\La} \nabla^2 V(0)[Du(z), D D_{\boldsymbol{\rho}}\G_k(\ell-z)] = D_{\boldsymbol{\rho}} u_k(\ell).
  \end{equation}
  If we now have a $u \in \HH$ where additionally $\lvert f^{(0)} \rvert \lesssim \abs{\ell}^{-\gamma}$, then the sum $\sum_{z\in\La}f^{(0)}(z)D_{\boldsymbol{\rho}}\G_k(\ell-z)$ converges (absolutely). Consider a smooth cutoff function $\eta_R$, such that $\eta_R \colon \R^d \to [0,1]$ satisfies $\eta_R(z)=1$ for $\lvert z \rvert \leq R$ and $\eta_R(z)=0$ for $\lvert z \rvert \geq 2R$, as well as $\lvert \nabla^k \eta_R \rvert \lesssim R^{-k}$ for $k=1, ..., j$. With that, we see 
  \begin{align*}
  \sum_{z\in\La}f^{(0)}(z)D_{\boldsymbol{\rho}}\G_k(\ell-z)&=\lim_{R\to \infty}\sum_{z\in\La}\eta_R(z) f^{(0)}(z)D_{\boldsymbol{\rho}}\G_k(\ell-z)\\
  &=\lim_{R\to \infty} -\sum_{z\in\La}\eta_R(z) \nabla^2 V(0)[Du(z), D D_{\boldsymbol{\rho}}\G_k(\ell-z)]\\
  &= - \sum_{z\in\La} \nabla^2 V(0)[Du(z), D D_{\boldsymbol{\rho}}\G_k(\ell-z)],
  \end{align*}
  as the remaining term can be estimated by
  \begin{align*}
  \Big\lvert \sum_{z\in\La} &\nabla^2 V(0)[Du(z), (D_{\boldsymbol{\rho}}\G_k(\ell-z + \sigma) D_\sigma \eta_R(z))_{\sigma}] \Big\rvert\\
   &\lesssim R^{-1} \lVert Du \rVert_{\ell^2} \lVert D_{\boldsymbol{\rho}} \G(\ell- \cdot) \rVert_{\ell^2(B_{2R+C}\setminus B_{R-C})}\\
  &\lesssim R^{1-d} \to 0.
  \end{align*}
  That is, \eqref{greensfunction} holds for all $u \in \HH$ with $\lvert f^{(0)}(\ell) \rvert \lesssim \abs{\ell}^{-\gamma}$. 
\end{proof}

\begin{proof}[Proof of Theorem \ref{thm:structure}]
We now use the Green's function representation \eqref{greensfunction} to estimate $ D_{\boldsymbol{\rho}} u(\ell)$. Let us consider $\lvert f^{(0)} \rvert \lesssim \abs{\ell}^{-d-p} \log^{\alpha} \abs{\ell}$ where we include both the case $\alpha \in \N_0$ and $\alpha < -1$ to include Remark \ref{rem:alphaneg}. Then let us assume there are $p$ vanishing moments.

Although our argument is related to the lower order equivalent in \cite{EOS2016},
it is technically more complex as both the higher derivatives and the higher order divergence form lead to partial summations which, crucially, have to be performed on only specific and separate parts of the lattice. We will therefore  provide full details.

This can be done in a very clean way by splitting the sum \eqref{greensfunction} into four regions; see Figure~\ref{fig:prf_regions} for a visualisation: Region 1 is the far field, where $\lvert z \rvert$ and $\lvert \ell -z\rvert$ are comparable. Region 2 is the intermediary area where $\lvert z \rvert$, $\lvert \ell \rvert$, and $\lvert \ell -z\rvert$ are all comparable. And region 3 and 4 are the areas around $z=0$ and $z=\ell$, where either $\lvert z \rvert$ can be small but $\lvert \ell \rvert$ and $\lvert \ell -z\rvert$ are comparable or $\lvert z-\ell \rvert$ can be small but $\lvert \ell \rvert$ and $\lvert z\rvert$ are comparable. Inserting estimates for the residual $f^{(0)}$ and the lattice Green's function $\G$ and using this split of the sum indeed gives sharp estimates in absolute value. However, as we will show below, this is only a good estimate if both $p=0$ and $j=1,2$. If either $p \geq 1$ or $j \geq 3$ then the sum in \eqref{greensfunction} exhibits significant large scale cancellation effects. To get sharp estimates in these cases, we will remove these cancelling terms via separate partial summations in region 3 and 4. The required discrete derivatives are directly available in the case $j \geq 3$ or are obtained with the help of Proposition \ref{divlemmahigherorder}. To avoid discrete boundary terms, we will not split the four regions sharply but use smooth cutoff functions. The boundary terms in the partial summation are then spread out and can be treated like terms in region 2. 

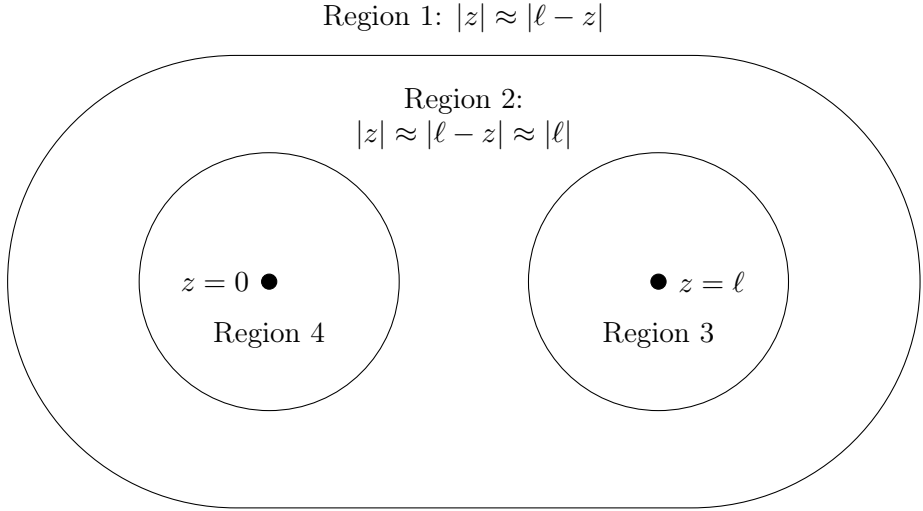
\begin{figure}
  \begin{tikzpicture}
    \draw[rounded corners=3cm] (-6, -3) rectangle ++(12, 6);
    \draw (2.56,0) circle (1.71cm);
    \draw (-2.56,0) circle (1.71cm);
    \draw[fill=black] (2.56, 0)  circle (0.1cm) node[anchor=west] {~$z = \ell$};
    \draw[fill=black] (-2.56, 0)  circle (0.1cm) node[anchor=east] {$z = 0~$};
    \draw (0,3.5) node {Region 1: $|z| \approx |\ell-z|$};
    \draw (0,2.4) node {Region 2: };
    \draw (0,1.95) node { $|z| \approx |\ell-z| \approx |\ell|$};
    \draw (2.56, -0.7) node {Region 3};
    \draw (-2.56, -0.7) node {Region 4};
\end{tikzpicture}
  \label{fig:prf_regions}
  \caption{Splitting $\mathbb{R}^d$ into four regions as used in the proof of Theorem \ref{thm:structure}.}
\end{figure}

So, let us take a smooth cutoff $\eta_\ell \colon \R^d \to [0,1]$ satisfying $\eta_\ell(z)=1$ for $\lvert z \rvert \leq \lvert \ell \rvert/4$ and $\eta_\ell(z)=0$ for $\lvert z \rvert \geq \lvert \ell \rvert/2$, as well as $\lvert \nabla^k \eta_\ell \rvert \lesssim \lvert \ell \rvert^{-k}$ for $k=1, ..., j$. As discussed, we now split the sum in \eqref{greensfunction} according to the four regions.
\begin{align}
\sum_{z\in\La}f^{(0)}(z)D_{\boldsymbol{\rho}}\G_k(\ell-z) &= \sum_{\lvert z \rvert > 2 \lvert \ell \rvert }f^{(0)}(z) D_{\boldsymbol{\rho}} \G_k(\ell-z)\nonumber\\
&\hspace{-1cm} + \sum_{\lvert z \rvert \leq 2 \lvert \ell \rvert}f^{(0)}(z)D_{\boldsymbol{\rho}}\G_k(\ell-z)(1-\eta_\ell(z))(1-\eta_\ell(\ell-z))\nonumber\\
&\hspace{-1cm} + \sum_{z\in\La}f^{(0)}(z)D_{\boldsymbol{\rho}}\G_k(\ell-z)\eta_\ell(z) \nonumber\\ 
&\hspace{-1cm} + \sum_{z\in\La}f^{(0)}(z)D_{\boldsymbol{\rho}}\G_k(\ell-z)\eta_\ell(\ell-z) \nonumber \\
& =: T_1 + T_2 + T_3 + T_4 \label{eq:T1-4definition}
\end{align}

We first estimate the far-field term
\begin{align*}
\lvert T_1 \rvert \lesssim  \sum_{\overset{z\in\La}{\lvert z \rvert > 2 \lvert \ell \rvert}} \abs{z}^{(-d-p) +  (2-d-j)} \log^{\alpha} \abs{z} \lesssim\abs{\ell}^{2-j-d-p} \log^{\alpha} \abs{\ell},
\end{align*}
where the logarithmic term is estimated trivially by $\log^{\alpha} \abs{z} \leq \log^{\alpha} \abs{\ell}$ for negative $\alpha$, while for $\alpha \in \N_0$ the estimate instead follows from partial integrations of the resulting one-dimensional radial integral $\int_{ \lvert \ell \rvert}^\infty r^{1-j-d-p} \log^{\alpha} r \,dr$.

The intermediary area is even more direct as we can just estimate the functions uniformly and multiply by the number of lattice points in the area
\begin{align*}
\lvert T_2 \rvert &\lesssim   \sum_{\overset{z\in\La}{\overset{\frac{1}{2}\lvert \ell \rvert \leq\lvert z \rvert \leq 2 \lvert \ell \rvert}{\lvert \ell-z \rvert \geq \frac{1}{2} \lvert \ell \rvert}}} \lvert f^{(0)}(z)\vert \lvert D_{\boldsymbol{\rho}}\G_k(\ell-z) \rvert\\
&\lesssim \abs{\ell}^d \abs{\ell}^{-d-p} \log^{\alpha} \abs{\ell} \abs{\ell}^{2-d-j}\\
&= \abs{\ell}^{2-j-d-p} \log^{\alpha} \abs{\ell}.
\end{align*}

Next, $T_3$ can be estimated by
\begin{align*}
\lvert T_3 \rvert &= \Big\lvert \sum_{z\in\La}\eta_\ell(z)f^{(0)}(z)D_{\boldsymbol{\rho}}\G(\ell-z)\Big\rvert \\
&\lesssim \abs{\ell}^{2-d-j}\sum_{\lvert z \rvert \leq \lvert \ell \rvert} \abs{z}^{-d-p} \log^{\alpha} \abs{z}.
\end{align*}
Hence, we have $\lvert T_3 \rvert \lesssim \abs{\ell}^{2-d-j}$ if either $p>0$ or $\alpha <-1$. If, on the other hand, $p=0$ and $\alpha \in \N_0$, then we obtain $\lvert T_3 \rvert \lesssim \abs{\ell}^{2-d-j} \log^{\alpha+1}\lvert \ell \rvert$.

Finally, for $T_4$ the estimate is 
\begin{align*}
\lvert T_4 \rvert &= \sum_{z\in\La}f^{(0)}(z)D_{\boldsymbol{\rho}}\G_k(\ell-z)\eta_\ell(\ell-z) \\
&\lesssim \abs{\ell}^{-d-p} \log^{\alpha} \abs{\ell}
  {\sum_{\lvert \ell-z \rvert \leq \lvert \ell \rvert/2}} \abs{\ell-z}^{2-d-j}\\
&\lesssim \abs{\ell}^{-d-p} \log^{\alpha} \abs{\ell} (1+\delta_{j 1} \lvert \ell \rvert +\delta_{j 2} \log\abs{\ell}).
\end{align*}
Putting all four estimates together, this completes the proof in the special case where both $p=0$ and $j =1,2$. 

For $p \geq 1$, we need a better estimate on $T_3$. We choose $f^{(m)}$ according to Proposition \ref{divlemmahigherorder}. We claim that for $0 \leq m \leq p$
\begin{align}
\Big\lvert \sum_{z\in\La}&\eta_\ell(z)f^{(0)}(z)D_{\boldsymbol{\rho}}\G(\ell-z)\Big\rvert \nonumber \\
 &\lesssim \abs{\ell}^{2-j-d-p} \log^{\alpha} \abs{\ell} + \Big\lvert \sum_{z\in\La} \sum_{\boldsymbol{\sigma} \in \mathcal{S}^m}\eta_\ell(z)f^{(m)}_{\boldsymbol{\sigma}}(z)D_{\boldsymbol{\sigma}} D_{\boldsymbol{\rho}}\G(\ell-z)\Big\rvert. \label{greensfunctionpartialsummation}
\end{align}
Let us prove \eqref{greensfunctionpartialsummation} by induction over $m$. Clearly, it is true for $m=0$ since the second term on the right-hand side is identical to the left-hand side. Given its validity for a $m$, with $m+1 \leq p$, we now employ Proposition \ref{divlemmahigherorder} and summation by parts to obtain 
\begin{align*}
\Big\lvert \sum_{z\in\La}&\eta_\ell(z)f^{(0)}(z)D_{\boldsymbol{\rho}}\G(\ell-z)\Big\rvert \\
 &\lesssim \abs{\ell}^{2-j-d-p} \log^{\alpha} \abs{\ell} + \Big\lvert \sum_{z\in\La} \sum_{\boldsymbol{\sigma} \in \mathcal{S}^m}\eta_\ell(z)f^{(m)}_{\boldsymbol{\sigma}}(z)D_{\boldsymbol{\sigma}} D_{\boldsymbol{\rho}}\G(\ell-z)\Big\rvert\\
 &= \abs{\ell}^{2-j-d-p} \log^{\alpha} \abs{\ell}+ \Big\lvert \sum_{z\in\La} \sum_{\boldsymbol{\sigma} \in \mathcal{S}^m}\eta_\ell(z) \sym f^{(m)}_{\boldsymbol{\sigma}}(z)D_{\boldsymbol{\sigma}} D_{\boldsymbol{\rho}}\G(\ell-z)\Big\rvert\\
 &= \abs{\ell}^{2-j-d-p} \log^{\alpha} \abs{\ell} + \Big\lvert \sum_{z\in\La} \sum_{\boldsymbol{\sigma}}\eta_\ell(z) {\rm Div}_{\mathcal{S}} f^{(m+1)}_{\boldsymbol{\sigma}}(z)D_{\boldsymbol{\sigma}} D_{\boldsymbol{\rho}}\G(\ell-z)\Big\rvert\\
 &\leq \abs{\ell}^{2-j-d-p} \log^{\alpha} \abs{\ell} + \Big\lvert \sum_{z\in\La} \sum_{\tau \in \mathcal{S}}\sum_{\boldsymbol{\sigma} \in \mathcal{S}^m}\eta_\ell(z)f^{(m+1)}_{\boldsymbol{\sigma} \tau}(z)D_{\tau} D_{\boldsymbol{\sigma}} D_{\boldsymbol{\rho}}\G(\ell-z)\Big\rvert\\
 &\quad +  \sum_{z\in\La} \sum_{\tau \in \mathcal{S}}\sum_{\boldsymbol{\sigma} \in \mathcal{S}^m} \big\lvert D_{\tau}\eta_\ell(z)f^{(m+1)}_{\boldsymbol{\sigma} \tau}(z) D_{\boldsymbol{\sigma}} D_{\boldsymbol{\rho}}\G(\ell-z+\tau) \big\rvert.
\end{align*}
The last term is concentrated in the annulus where $D_\tau\eta$ is non-zero and can be estimated by 
\begin{align*}
\sum_{z\in\La} \sum_{\tau \in \mathcal{S}}\sum_{\boldsymbol{\sigma} \in \mathcal{S}^m} &\big\lvert D_{\tau}\eta_\ell(z)f^{(m+1)}_{\boldsymbol{\sigma} \tau}(z) D_{\boldsymbol{\sigma}} D_{\boldsymbol{\rho}}\G(\ell-z+\tau) \big\rvert\\
 &\lesssim \abs{\ell}^d \abs{\ell}^{-1}\abs{\ell}^{-d-p+m+1}\log^{\alpha} \abs{\ell} \abs{\ell}^{2-d-m-j}\\
 &=\abs{\ell}^{2-j-d-p}\log^{\alpha} \abs{\ell},
\end{align*}
which completes the proof of \eqref{greensfunctionpartialsummation}.

Using \eqref{greensfunctionpartialsummation} for $k=p$, we find
\begin{align*}
\lvert T_3 \rvert &= \Big\lvert \sum_{z\in\La}\eta_\ell(z)f^{(0)}(z)D_{\boldsymbol{\rho}}\G(\ell-z)\Big\rvert \\
&\lesssim \abs{\ell}^{2-j-d-p} \log^{\alpha} \abs{\ell} +  \abs{\ell}^{2-d-j-p}\sum_{\lvert z \rvert \leq \lvert \ell \rvert} \abs{z}^{-d} \log^{\alpha} \abs{z}.
\end{align*}
Therefore, $\lvert T_3 \rvert \lesssim \abs{\ell}^{2-j-d-p}$ for $\alpha < -1$ and $\lvert T_3 \rvert \lesssim \abs{\ell}^{2-j-d-p} \log^{\alpha+1} \abs{\ell}$ for $\alpha \in \N_0$. This finishes the proof for $j=1,2$ and arbitrary $p$. 

Finally, for $j \geq 3$, we need a better estimate for $T_4$. In this case, let us split the difference directions as $\boldsymbol{\rho}=(\boldsymbol{\sigma},\boldsymbol{\tau})$, with $\boldsymbol{\sigma} \in \Rc^2$. Then, 
\begin{align*}
\sum_{z\in\La}f^{(0)}(z)D_{\boldsymbol{\rho}}\G(\ell-z)\eta_\ell(\ell-z)  &=  \sum_{z\in\La}  D_{-\boldsymbol{\tau}}\Big(f^{(0)}(z)\eta_\ell(\ell-z) \Big)D_{\boldsymbol{\sigma}}\G(\ell-z).
\end{align*}
Note that if at least one discrete derivative falls on $\eta_\ell$ the estimate is similar to $T_2$ as $\lvert z \rvert$, $\lvert \ell \rvert$, $\lvert \ell - z \rvert$ are then comparable for all non-zero terms. We thus get
\begin{align*}
\lvert T_4 \rvert &\leq \sum_{z\in\La}  \Big\lvert D_{-\boldsymbol{\tau}}\Big(f^{(0)}(z)\eta_\ell(\ell-z) \Big) \Big\rvert \lvert D_{\boldsymbol{\sigma}}\G(\ell-z) \rvert\\
&\lesssim  \sum_{z\in\La} \sum_{m=0}^{j-2}  \lvert D_{-\mathcal{S}}^{m}f^{(0)}(z)\rvert \lvert D_{-\mathcal{S}}^{j-2-m} \eta_\ell(\ell-z) \rvert  \lvert D_{\boldsymbol{\sigma}}\G(\ell-z) \rvert\\
&\lesssim  \sum_{m=0}^{j-3} \abs{\ell}^d \abs{\ell}^{-d-p-m}\log^{\alpha}\abs{\ell} \abs{\ell}^{2-j+m}  \abs{\ell}^{-d} \\
& \qquad + \sum_{z\in\La}  \lvert D_{-\mathcal{S}}^{j-2}f^{(0)}(z)\rvert \lvert \eta_\ell(\ell-z) \rvert  \lvert D_{\boldsymbol{\sigma}}\G(\ell-z) \rvert\\
&\lesssim \abs{\ell}^{2-d-p-j}\log^{\alpha}\abs{\ell} + \abs{\ell}^{2-d-p-j}\log^{\alpha}\abs{\ell} \sum_{\lvert z \rvert \leq \lvert \ell \rvert}  \abs{z}^{-d}\\
&\lesssim \abs{\ell}^{2-d-p-j}\log^{\alpha+1}\abs{\ell}.
\end{align*}
Note that we freely used the estimates of $\lvert D_{\mathcal{S}}^{j-2}f^{(0)} \rvert $ for $ \lvert D_{-\mathcal{S}}^{j-2}f^{(0)} \rvert $, as $\mathcal{S}$ spans the lattice and thus these estimates are equivalent.
\end{proof}

Theorem~\ref{thm:structurewithmoments} is almost an immediate consequence of 
Theorem~\ref{thm:structure}. The only additional ingredient we need for the 
proof is the following lemma.

\begin{lemma} \label{lem:bijectionmomentstob}
In the setting of Theorem \ref{thm:structurewithmoments} define
a function $v_b$ parameterised by $b = (b^{(i,k)})_{i,k}$ by 
\[v_b := \sum_{i=0}^{p-1} \sum_{k=1}^n b^{(i,k)} : D_{\rm \mathcal{S}}^i \G_k.\]
Then there are unique $b^{(i,k)} \in (\R^{\mathcal{S}})^{\odot i}$, $i=1, \dots, p-1$, $k=1,\dots, N$, such that $\mathcal{I}_j(u)=\mathcal{I}_j(v_b)$ for all $j=0, \dots, p-1$.
\end{lemma}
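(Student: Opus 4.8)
The plan is to compute the moments $\mathcal{I}_j(v_b)$ explicitly as linear functions of the unknown coefficients $b^{(i,k)}$ and to show that the resulting linear system is block lower-triangular with invertible diagonal blocks, so that it can be solved uniquely order by order in $j = 0, 1, \dots, p-1$.

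First I would compute $H[v_b]$. Since $H$ is a translation-invariant constant-coefficient difference operator it commutes with the discrete differences $D_{\mathcal{S}}$, and by definition of the lattice Green's function $H[\G_k] = e_k \delta_{\cdot,0}$. Hence
\[
  H[v_b] = \sum_{i=0}^{p-1}\sum_{k=1}^N \sum_{\boldsymbol{\sigma}\in\mathcal{S}^i} b^{(i,k)}_{\boldsymbol{\sigma}}\, D_{\boldsymbol{\sigma}}\big(e_k \delta_{\cdot,0}\big),
\]
which is supported on finitely many lattice sites near the origin. In particular every moment $\mathcal{I}_j(v_b)$ is a finite sum, so that all summation-by-parts manipulations below involve no boundary terms and no convergence issues (even though $v_b$ itself need not lie in $\HH$).

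Next I would evaluate $\mathcal{I}_j(v_b) = \sum_\ell H[v_b](\ell)\otimes \ell^{\otimes j}$. Moving the difference operator onto the polynomial weight by summation by parts gives, for each term,
\[
  \sum_\ell D_{\boldsymbol{\sigma}}\big(e_k\delta_{\cdot,0}\big)(\ell)\otimes \ell^{\otimes j}
  = e_k \otimes \big[D_{-\boldsymbol{\sigma}}\,\ell^{\otimes j}\big]_{\ell = 0}.
\]
The key structural observation is that $\ell \mapsto \ell^{\otimes j}$ is a polynomial of degree $j$, so $D_{-\boldsymbol{\sigma}}\ell^{\otimes j} = 0$ whenever $\boldsymbol{\sigma}\in\mathcal{S}^i$ with $i > j$; thus $\mathcal{I}_j(v_b)$ depends only on the coefficients $b^{(i,k)}$ with $i \le j$. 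For the diagonal contribution $i = j$, Lemma~\ref{th:Diffpellp_identity} yields the constant value $D_{-\boldsymbol{\sigma}}\ell^{\otimes j} = j!\,(-\boldsymbol{\sigma})^\odot = (-1)^j j!\,\boldsymbol{\sigma}^\odot$, so that the $i=j$ part of $\mathcal{I}_j(v_b)$ equals $(-1)^j j!\sum_{k} e_k\otimes\sum_{\boldsymbol{\sigma}\in\mathcal{S}^j} b^{(j,k)}_{\boldsymbol{\sigma}}\,\boldsymbol{\sigma}^\odot$, while the terms with $i<j$ produce fixed tensors in $\R^N\otimes(\R^d)^{\odot j}$ determined entirely by the lower-order coefficients.

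Finally I would invert the diagonal block. Since $\mathcal{S}$ is a $\Z$-basis of $\La$ it is an $\R$-basis of $\R^d$ with $\#\mathcal{S}=d$; hence the number of multi-indices $\boldsymbol{\sigma}\in\mathcal{S}^j$ modulo permutation equals $\dim(\R^d)^{\odot j}$, and by Lemma~\ref{symmetrictensors} the tensors $\boldsymbol{\sigma}^\odot$ are linearly independent, so they form a basis of $(\R^d)^{\odot j}$. Consequently $b^{(j,k)}\mapsto \sum_{\boldsymbol{\sigma}} b^{(j,k)}_{\boldsymbol{\sigma}}\boldsymbol{\sigma}^\odot$ is a linear isomorphism $(\R^{\mathcal{S}})^{\odot j}\to(\R^d)^{\odot j}$, and tensoring with the basis $\{e_k\}$ of $\R^N$ shows the full diagonal map onto $\R^N\otimes(\R^d)^{\odot j}$ — the space in which $\mathcal{I}_j(u)$ lives, as $\ell^{\otimes j}$ is symmetric — is invertible. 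Solving $\mathcal{I}_j(u)=\mathcal{I}_j(v_b)$ inductively for $j=0,1,\dots,p-1$ then determines each $b^{(j,k)}$ uniquely from $\mathcal{I}_j(u)$ and the previously fixed $b^{(i,k)}$, $i<j$ (the step $j=0$ fixing the monopole coefficient $b^{(0,k)}$ from $\mathcal{I}_0$), giving both existence and uniqueness. I expect the main obstacle to be precisely this invertibility of the diagonal map via Lemma~\ref{symmetrictensors} and the dimension count; the remaining work is bookkeeping, with care needed only to confirm that the over-differenced terms genuinely vanish (securing the triangular structure) and that $H[v_b]$ has compact support (securing well-definedness of the moments).
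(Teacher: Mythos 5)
Your proof is correct and follows essentially the same route as the paper's: both compute $H[v_b]=\sum_{i,k,\boldsymbol{\sigma}} b^{(i,k)}_{\boldsymbol{\sigma}} D_{\boldsymbol{\sigma}}(e_k\delta_0)$, exploit its compact support, move the differences onto $\ell^{\otimes j}$ by summation by parts, identify the $i=j$ contribution via Lemma~\ref{th:Diffpellp_identity}, and invert the resulting map $(\R^{\mathcal{S}})^{\odot j}\to(\R^d)^{\odot j}$ using Lemma~\ref{symmetrictensors} together with a dimension count. The one place you diverge works in your favour: the paper's displayed computation asserts that only the $i=j$ terms survive (a diagonal system), whereas the cross terms with $1\le i<j$ need not vanish (e.g.\ $i=1$, $j=2$ gives $D_{-\rho}(\ell^{\otimes 2})(0)=\rho^{\otimes 2}$), so your lower-triangular bookkeeping with the inductive solve over $j=0,\dots,p-1$ is the more careful, strictly correct, version of the same argument and reaches the identical conclusion.
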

\begin{proof}
For any $b$ one calculates
\begin{align*}
\mathcal{I}_j(v_b) &= \sum_{\ell} H[v_b](\ell) \otimes \ell^{\otimes j} \\
&= \sum_{i=0}^{p-1} \sum_{k=1}^N \sum_{\boldsymbol{\rho} \in \mathcal{S}^i} b^{(i,k)}_{\boldsymbol{\rho}}\sum_{\ell} H [D_{\boldsymbol{\rho}} \G_k](\ell) \otimes \ell^{\otimes j}.
\end{align*}
Note that $H [D_{\boldsymbol{\rho}} \G_k] = D_{\boldsymbol{\rho}} \delta_0 e_k$. In particular, there are no summability problems as this expression has compact support. With that in mind we can use Lemma \ref{th:Diffpellp_identity} to calculate

\begin{align*}
(\mathcal{I}_j(v_b))_{k\cdot} &=  \sum_{i=0}^{p-1} \sum_{\boldsymbol{\rho} \in \mathcal{S}^i} b^{(i,k)}_{\boldsymbol{\rho}}D_{\boldsymbol{-\rho}} ( \ell^{\otimes j})(0)\\
&= (-1)^j j!  \sum_{\boldsymbol{\rho} \in \mathcal{S}^j} b^{(j,k)}_{\boldsymbol{\rho}} \boldsymbol{\rho}^\odot.
\end{align*}

Therefore, it is sufficient to show that the linear map
\[T \colon (\R^{\mathcal{S}})^{\odot j} \to (\R^d)^{\odot j}\]
given by
\[Tb = \sum_{\boldsymbol{\rho} \in \mathcal{S}^j} b_{\boldsymbol{\rho}} \boldsymbol{\rho}^\odot\]
is a bijection. By dimensionality (recall that $\mathcal{S}$ is chosen to be a basis), this is equivalent to $T$ being one--to--one which follows from Lemma \ref{symmetrictensors}.
\end{proof}

\begin{proof}[Proof of Theorem \ref{thm:structurewithmoments}]
Let $v_b=\sum_{i=0}^{p-1} \sum_{k=1}^n b^{(i,k)} : D_{\rm \mathcal{S}}^i \mathcal{G}_k$, and choose $b$ according to Lemma \ref{lem:bijectionmomentstob}. As the moments are linear, the result then follows directly from Theorem \ref{thm:structure}.
\end{proof}

Note that even though the choice of $b$ in Lemma \ref{lem:bijectionmomentstob} is unique, that does not necessarily mean that the choice of $b$ for Theorem \ref{thm:structurewithmoments} is unique. Indeed, it can happen that for certain $b$ the sum $ \sum_{k=1}^N b^{(i,k)} : D_{\rm \mathcal{S}}^i \mathcal{G}_k$ exhibits sufficient cancellation to be part of the error estimate, for example if the coefficients $b$ correspond to a discrete approximation of the CLE equation.

\paragraph{Decay estimates for the far-field continuum equations}

We also want to have decay estimates for the equations \eqref{eq:correctorequation} introduced in \S\,\ref{sec:contdev}. These can be obtained along similar lines to the discrete case. However, we are purely interested in the far-field which simplifies things a bit and avoids a few additional complications specific to the continuum case.

\begin{proposition} \label{prop:contestimates}
Let $p, \alpha \in \N_0$. For any $f \in C^{\infty}(\R^d \backslash B_{R_0/2}(0);\R^N)$ with
\[\lvert \nabla^j f(\ell) \rvert \leq C_j \lvert \ell \rvert^{-d-p-j} \log^{\alpha} \lvert \ell \rvert \quad \forall \ell \in  \R^d \backslash B_{R_0/2}(0)\]
for all $j\in\N_0$, there exists a $u \in C^{\infty}(\R^d \backslash B_{R_0}(0);\R^N)$ and $\tilde{C}_j$ with
\[\lvert \nabla^j u(\ell) \rvert \leq \tilde{C}_j \lvert \ell \rvert^{2-d-p-j} \log^{\alpha+1} \lvert \ell \rvert \quad \forall \ell \in  \R^d \backslash B_{R_0}(0) \]
for all $j\in\N$, such that
\[H^{\rm C}[u]=f \quad \text{in}\ \R^d \backslash B_{R_0}(0).\]
\end{proposition}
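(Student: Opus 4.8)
The plan is to construct $u$ explicitly as a subtracted Newtonian-type potential built from the continuum Green's function $G^{\rm C}$, mirroring the lattice Green's function representation used in the proof of Theorem~\ref{thm:structure}; the continuum analogue of the higher-order divergence form of Proposition~\ref{divlemmahigherorder} (which there exploited vanishing moments) is a Taylor subtraction of the kernel, exploiting here that we are free to add homogeneous solutions of $H^{\rm C}$. First I would remove the restriction that $f$ is only defined outside $B_{R_0/2}$ by fixing a smooth cutoff $\chi$ with $\chi\equiv 0$ on $B_{R_0/2}$ and $\chi\equiv 1$ on $\R^d\setminus B_{R_0}$, and setting $g:=\chi f$. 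Then $g\in C^\infty(\R^d)$ inherits the bounds $\lvert\nabla^j g(x)\rvert\lesssim\lvert x\rvert^{-d-p-j}\log^\alpha\lvert x\rvert$ and agrees with $f$ on $\R^d\setminus B_{R_0}$. Crucially, since $\lvert\beta\rvert\leq p-1<p$, the moment integrals $\int_{\R^d}y^\beta g(y)\,dy$ converge absolutely.

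I would then define
\[
  u(x):=\int_{\R^d}\Big[G^{\rm C}(x-y)-\sum_{\lvert\beta\rvert\leq p-1}\tfrac{(-y)^\beta}{\beta!}\nabla^\beta G^{\rm C}(x)\Big]\,g(y)\,dy,
\]
interpreted modulo an additive constant in the borderline case $d=2,p=0$, where the unsubtracted integral is only conditionally convergent at infinity (harmless, since only $\nabla^j u$ with $j\geq 1$ is to be estimated). Since $H^{\rm C}[G^{\rm C}]=\delta_0\,\Id$ and $H^{\rm C}$ has constant coefficients, each $\nabla^\beta G^{\rm C}$ satisfies $H^{\rm C}[\nabla^\beta G^{\rm C}]=\nabla^\beta\delta_0=0$ on $\R^d\setminus\{0\}$, so the subtracted terms are homogeneous solutions and do not affect the equation; differentiating under the integral then gives $H^{\rm C}[u]=g=f$ on $\R^d\setminus B_{R_0}$, and smoothness of $u$ there follows from the smoothness of $G^{\rm C}$ off the diagonal together with that of $g$.

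For the decay estimate I would write $\nabla^j u(x)=\int K_j(x,y)g(y)\,dy$ with $K_j(x,y):=\nabla^j G^{\rm C}(x-y)-\sum_{\lvert\beta\rvert\leq p-1}\tfrac{(-y)^\beta}{\beta!}\nabla^{\beta+j}G^{\rm C}(x)$, and split the $y$-integral exactly as in the four-region decomposition of the proof of Theorem~\ref{thm:structure}, writing $r:=\lvert x\rvert$. In the far field $\lvert y\rvert\geq 2r$ and the intermediate annulus $r/2\leq\lvert y\rvert\leq 2r$ (away from the diagonal) all factors are comparable, and a direct insertion of the homogeneity bounds $\lvert\nabla^{j+\lvert\beta\rvert}G^{\rm C}(x)\rvert\lesssim\lvert x\rvert^{2-d-j-\lvert\beta\rvert}$ from Theorem~\ref{thm:greensfunctionexpansion} — estimating the two parts of $K_j$ separately — yields contributions of order $r^{2-d-p-j}\log^\alpha r$; note that the subtracted part contributes acceptably because the far-field integral gains an extra factor $r^{-1}$. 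The two essential regions are the remaining ones. Near the origin, $\lvert y\rvert\leq r/2$, the individual parts of $K_j$ decay only like $r^{2-d-j}$, so the cancellation must be used: there the segment from $x$ to $x-y$ avoids $0$, and $K_j$ is precisely the order-$(p-1)$ Taylor remainder of $y\mapsto\nabla^j G^{\rm C}(x-y)$ at $y=0$, giving $\lvert K_j(x,y)\rvert\lesssim\lvert y\rvert^{p}\,r^{2-d-p-j}$, whence
\[
  r^{2-d-p-j}\int_{R_0/2\leq\lvert y\rvert\leq r/2}\lvert y\rvert^{p}\,\lvert y\rvert^{-d-p}\log^\alpha\lvert y\rvert\,dy
  \;\lesssim\; r^{2-d-p-j}\,\log^{\alpha+1}r,
\]
which is exactly where the single extra logarithm is generated, through the borderline radial integral $\int^{r}s^{-1}\log^\alpha s\,ds$. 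Near the diagonal, $\lvert x-y\rvert\leq r/2$, the derivative $\nabla^j G^{\rm C}$ fails to be locally integrable for $j\geq 2$, so — just as discrete differences were shifted onto $f^{(0)}$ for $j\geq 3$ in Theorem~\ref{thm:structure} — I would integrate by parts to move $j-1$ derivatives off the kernel and onto the smooth $g$, estimating $\int_{\lvert x-y\rvert\leq r/2}\lvert\nabla G^{\rm C}(x-y)\rvert\,\lvert\nabla^{j-1}g(y)\rvert\,dy\lesssim r\cdot r^{-d-p-(j-1)}\log^\alpha r=r^{2-d-p-j}\log^\alpha r$ (the boundary terms falling into the intermediate region), while the subtracted polynomial part is estimated directly. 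Summing the four contributions gives the claimed bound.

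The main obstacle is the bookkeeping in this decay estimate, and specifically the interplay of the two critical regions: the near-origin region, where the Taylor subtraction to order exactly $p-1$ must be exploited to convert the slowly decaying multipole tails into the borderline integral producing $\log^{\alpha+1}$; and the near-diagonal region, where one must redistribute derivatives onto the smooth, fast-decaying $g$ to control the non-integrable singularity of $\nabla^j G^{\rm C}$ — this is precisely why the hypothesis must supply bounds on \emph{all} derivatives of $f$. I would also note that the logarithm in $G^{\rm C}$ for $d=2$ is confined to $u$ itself and disappears upon taking at least one derivative, consistent with the estimate being asserted only for $j\in\N$.
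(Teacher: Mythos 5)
Your proposal is correct, and it replaces the paper's key device with a genuinely different one. The paper likewise cuts off $f$, defines $u$ through convolution with $G^{\rm C}$, and runs the same four-region split as in Theorem~\ref{thm:structure} — including handling the near-diagonal region exactly as you do, by shifting derivatives onto the smooth $f$ (which is why bounds on all derivatives are assumed). But for the crucial near-origin region the paper first builds a continuum analogue of Lemma~\ref{divlemma} and Proposition~\ref{divlemmahigherorder}: an inductive rescaling construction ($f_{n+1}(\ell):=3^d f_n(3\ell)$, etc.) producing $\sym f^{(k-1)} = -\divo f^{(k)}$, $1\le k\le p$, where the freedom to redefine everything inside $B_{R_0}$ substitutes for vanishing-moment hypotheses; the $p$ extra orders of decay then come from integrating by parts $p$ times against the Green's function. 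You instead leave $g=\chi f$ untouched and subtract from the kernel its order-$(p-1)$ Taylor polynomial in $y$ at $y=0$; the subtracted terms are multipole fields $\sum_\beta c_\beta \nabla^\beta G^{\rm C}$ whose coefficients are the absolutely convergent moments of $g$, they are annihilated by $H^{\rm C}$ away from the origin so the equation is unaffected, and the near-origin gain comes from the Taylor remainder bound $\lvert K_j(x,y)\rvert \lesssim \lvert y \rvert^p \lvert x \rvert^{2-d-p-j}$. The two mechanisms are dual — your subtraction coefficients are precisely the moments that the paper's divergence-form route would need to kill — and both generate the extra logarithm from the same borderline integral $\int^r s^{-1}\log^\alpha s \, ds$. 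What your route buys: it bypasses the divergence-form construction entirely (which the paper only sketches), is more classical and self-contained, and makes the multipole structure of the correction explicit. What the paper's buys: the discrete proof of Theorem~\ref{thm:structure} is reused almost verbatim, and $u$ stays a plain convolution. Two points in your write-up deserve the care you already flag, and would need it in a full proof: the case $d=2$, $p=0$, where the unsubtracted integral diverges logarithmically and must be renormalized by a constant (harmless since only $\nabla^j u$, $j\ge 1$, is estimated), and the fact that $\nabla^j u(x)=\int K_j(x,y)g(y)\,dy$ is not absolutely convergent near the diagonal for $j\ge 2$, so the representation there must be defined through the integration by parts you describe — preferably with smooth cutoffs separating the regions, as in the paper, so that only annulus terms rather than sharp boundary terms appear.
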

\begin{remark}
It is important to point out that the additional logarithm in the result is a generic aspect of the problem and not just a limitation of our proof. To see that consider $\alpha=0$ and $p=1$. Specifically, let us take a $f$ with $f = \divo g$, $\lvert g(\ell) \rvert \lesssim \lvert \ell \rvert^{-d}$, and $\lvert f(\ell) \rvert \lesssim \lvert \ell \rvert^{-d-1}$. Then our proof below actually shows
\begin{align*}
\partial_i u_k &= O(\lvert \ell \rvert^{-d}) + \nabla  \partial_i (G_0)_k(\ell) \colon \int_{B_{\lvert \ell \rvert/4}(0) \backslash B_1(0)}  g(z) \,dz.
\end{align*}
Even for something as simple as $g(\ell) : = E_{11} \lvert \ell \rvert^{-d}$  one directly gets a logarithm from the integral. That means the logarithm term naturally comes from a summing of the stresses (for $p=1$) around the defect core.

Of course there are special cases where there is no logarithmic term appearing. For $p=1, \alpha=0$ that would be any setting with enough cancellation such that \[ \sup_{R>1} \Big\lvert \int_{B_{R}(0) \backslash B_1(0)}  g(z) \,dz \Big\rvert < \infty.\]
For example, the first corrector equation in \cite{BBO19} satisfies $\int_{\partial B_R(0)} g(z) \,dz =0$ and indeed there was no logarithmic term.
\end{remark}

\begin{proof}[Sketch of the Proof] As the equation only needs to be satisfied outside of $B_{R_0}$, we can assume without loss of generality that $f \in C^{\infty}(\R^d;\R^N)$, by multiplying $f$ with an appropriate cutoff function. Our ultimate goal will be to define $u := f \ast G^{\rm C}$ and follow the same estimates as in the discrete case. The main problem in adapting the discrete proof to the continuum is the need to replicate Lemma~\ref{divlemma} and Lemma~\ref{divlemmahigherorder} which allow conversion to appropriate divergence form for $p \geq 1$, and we sketch out the route to the analogues of these results here.

  We follow the general idea of the proof in \cite{EOS2016}, defining $f_0:=f$, $f_{n+1}(\ell):=3^d f_n(3\ell)$, and
\[(g_n)_i:= 3^{i-1}\int_{\ell_i}^{3 \ell_i} f_n(3\ell_1, \dots, 3 \ell_{i-1}, \lambda, \ell_{i+1}, \dots,\ell_{d}) \,d\lambda. \]
By direct computation it can be checked that
\begin{equation}\label{eq:gnrecurrence}
f_{n+1} = f_n + \divo g_{n},
\end{equation}
and using the bound on $f$ assumed in the statement and $p \geq 1$, one concludes that $f_n \to 0$ uniformly in any $\R^d \backslash B_{\delta}(0)$. Similarly, we may show that the sequence $g_n$ is uniformly summable over the same sets, and we define $g:= \sum_{n=0}^{\infty} g_n$. Using the relation \eqref{eq:gnrecurrence}, we find $f = -\divo g$ in $\R^d \backslash \{0\}$. It also easy to check that $g \in C^\infty(\R^d \backslash \{0\})$ and that $g$ satisfies the decay $\lvert g(\ell) \rvert \lesssim \lvert \ell \rvert^{-d-p+1} \log^{\alpha} \lvert \ell \rvert$ away from zero. In the continuum setting, a possible consequence of this construction is that $g$ may have a singularity at $\ell =0$. However, since we are only interested in the far-field behaviour, we can simply remove any singularity by redefining $g$ on the interior of $B_{R_0}(0)$ such that $g \in C^\infty(\R^d; \R^{N \times d})$, and then redefine $f:= \divo g$. This operation does not affect the behaviour outside $B_{R_0}(0)$, and we have therefore recovered the equivalent of Lemma~\ref{divlemma}.

A similar construction and inductive argument to that used in the proof of Lemma~\ref{divlemmahigherorder} allows us to generate an extension of $f$ to $f=f^{(0)} \in C^\infty(\R^d; \R^N)$ such that there are tensor fields $f^{(k)} \in C^\infty(\R^d; \R^N \otimes (\R^d)^{\otimes k})$ for $1 \leq k \leq p$, such that $\sym f^{(k-1)}= -\divo f^{(k)}$ for $1 \leq k \leq p$ satisfying the decay estimate $\lvert f^{(k)}(\ell) \rvert \lesssim \lvert \ell \rvert^{-d-p+k} \log^{\alpha} \lvert \ell \rvert$ for $\ell$ bounded away from zero. To avoid generating a singularity at $\ell=0$ in this case, again we redefine the functions in $B_{R_0}(0)$ starting at the highest order $f^{(p)}$.

With this ability to transform $f$ to (higher-order) divergence form, one can now follow the proof of Theorem \ref{thm:structure} almost verbatim to estimate the derivatives of $u=f \ast G^{\rm C}$ using the split into the four different contributions defined in \eqref{eq:T1-4definition}. Note that the singularity of $G^{\rm C}$ is not a problem as $G^{\rm C}$ and its first derivative are locally integrable everywhere, and we only need to take further derivatives of the Green's function in the estimate of $T_3$, which is the part away from that singularity where $G^{\rm C}$ is smooth. Indeed, in the estimate of $T_4$, we can use the fact that we have assumed estimates for derivatives of $f$ of arbitrary order and only put one derivative on the Green's function instead of two.
\end{proof}

\section{The lattice Green's function and series expansion} \label{sec:greensfunctions}
In this section, we define the lattice Green's function, and construct a series expansion in terms of continuum kernels proving Theorem \ref{thm:greensfunctionexpansion}. The general strategy is to expand the inverse of the discrete Fourier multiplier corresponding to the lattice operator $H$ as a series of homogeneous terms about $k=0$, and then use tools developed by Morrey in \cite{m66} to provide integral representations of these terms in real space which allow us to explicitly characterise the decay and even homogeneity of each term. In the first parts we will also follow some ideas that were developed in \cite{MR02} for the simpler scalar case where $N=1$. 

\subsection{Fourier transforms}
We begin by providing definitions of the Fourier transforms we deploy here. For $f:\La\to\R^n$, we define the Fourier transform
$\mathcal{F}_a[f]:\mathcal{B}\to\R^n$, where $\mathcal B$ is the Brillouin zone
of the lattice, via
\begin{equation*}
  \mathcal{F}_a[f](k) = \sum_{\ell\in\La}f(\ell)\mathrm{e}^{-ik\cdot \ell}.
\end{equation*}
The inverse is given by
\begin{equation*}
  \mathcal{F}_a^{-1}[g](\ell) = \frac{1}{|\mathcal B|}\int_{\mathcal B} g(k)\mathrm e^{ik\cdot \ell}\, \mathrm dk.
\end{equation*}
Note that $\lvert \mathcal B \rvert = \frac{(2 \pi)^d}{ c_{\rm vol}}$, where $c_{\rm vol}$ is the volume of a fundamental cell of $\Lambda$.

We note the following properties.
\begin{itemize}
\item If $\boldsymbol{\rho} = (\rho_1,\ldots,\rho_j)\in\mathcal R^j$, then
  \begin{equation*}\mathcal{F}_a[D_{\boldsymbol{\rho}}f](k)
  = \prod_{n=1}^j (\mathrm e^{i k\cdot \rho_n}-1) \mathcal{F}_a[f](k).
\end{equation*}
\item For each $m\in\N$, we have that
  \begin{equation*}
    \sum_{\ell\in\La} f(\ell)\otimes \ell^{\otimes m} = i^m \nabla^m \mathcal{F}_a[f](0).
  \end{equation*}
  if $f$ decays fast enough such that the sum converges absolutely.
\end{itemize}

We also use the standard continuum Fourier transform
\begin{equation*}
  \mathcal{F}_c[f](k) = \int_{\R^d} f(x)\mathrm{e}^{-ik\cdot x} \,dx.
\end{equation*}
defined for $f \in L^1(\R^d; \R^N)$ and extended to the space of tempered distributions $\mathcal{S}'(\R^d; \R^N)$ by duality.
As usual the inverse is given by
\begin{equation*}
  \mathcal{F}_c^{-1}[g](x) = \frac{1}{(2 \pi)^d} \int_{\R^d} g(k)\mathrm{e}^{ik\cdot x} \,dx.
\end{equation*}

\subsection{Fourier multiplier series manipulation}\label{sec:four-mult-seri}
Next, we formally carry out a series development of the inverse of the discrete lattice operator Fourier multiplier, which we will subsequently analyse in detail.

As before consider the Hamiltonian $H=\delta^2 \mathcal{E}(0)$ given pointwise by
  $H [u]= -\Div \big(\sum_{\rho\in\mathcal R} \CC_{\sigma \rho} D_\rho u\big)_{\sigma \in \Rc}$, where $\CC = \nabla^2 V(0) \in \R^{(\Rc \times N) \times (\Rc \times N)}$ and we write $\CC_{\sigma \rho} = (\CC_{\sigma i\rho j})_{ij}\in \R^{N \times N}$. $\CC$ satisfies the symmetry $\CC_{i \rho k \sigma} = \CC_{k \sigma i \rho}$ as a second derivative and $\CC_{i \rho k \sigma} = \CC_{i (-\rho) k (-\sigma)}$ due to equation \eqref{eq:pointsymmetry}. Then (see \cite{HO2012,braun16static}) $H$ has a representation as a Fourier multiplier
  \begin{align*}
    \widehat{H}(k) &:=\mathcal{F}_a[H](k)\\
    & = \sum_{\rho, \sigma \in\Rc} \CC_{\rho \sigma} \big(2\sin^2(\tfrac12 k\cdot \rho) + 2\sin^2(\tfrac12 k\cdot \sigma) - 2\sin^2(\tfrac12 k\cdot (\rho-\sigma)) \big)\\
    & = \sum_{\rho \in (\Rc \cup \{0\})- (\Rc \cup \{0\})  } 4 A_{\rho} \sin^2(\tfrac12 k\cdot \rho),
  \end{align*}
  where the matrices $A_\rho$ are implicitly defined through the second identity.

As before we assume lattice stability \eqref{eq:results:latticestab}. In reciprocal space this entails that there exists $c_0>0$
such that
\begin{equation*}
  \widehat{H}(k)\geq c_0 \sum_{\rho\in\mathcal{R}}4\sin^2(\tfrac12 k\cdot\rho)\Id,\quad\text{for all }k\in\mathcal B,
\end{equation*}
where $\Id$ is the $N\times N$ identity matrix. We note that $\widehat{H}\in C^\infty_{\text{per}}(\mathcal B)$, and by virtue of lattice ellipticity, $\widehat{H}$ is strictly positive definite except when $k=0$. This entails that the matrix inverse $\widehat{H}^{-1}(k)$ exists everywhere in $\mathcal B$ except at $k=0$, and moreover,
\begin{equation*}
  \widehat{H}^{-1}(k)\in C^\infty_{\text{per}}(\mathcal B\setminus \{0\}).
\end{equation*}

We define
\begin{equation*}
  \widehat{H}_{2n}(k) := \sum_{\rho \in (\Rc \cup \{0\})- (\Rc \cup \{0\})} \frac{(-1)^{n+1}2(k\cdot \rho)^{2n}}{(2n)!} A_{\rho},
\end{equation*}
which is $2n$ homogeneous in $k$. Indeed, as $4\sin^2(\frac12x)=2(1-\cos(x))$ we have the globally convergent power series
\begin{equation*}
  \widehat{H}=\sum_{n=1}^\infty \widehat{\mathcal H}_{2n}(k).
\end{equation*}
A standard procedure can not be used to invert this series. Defining $S = \widehat{H}-\widehat{H}_2$, it is straightforward to check that
$|\widehat{S}(k)| \lesssim |k|^4$ for $k$ in a
neighbourhood of $0$. Also, since
$c_0 |k|^2\Id\leq \widehat{H}_2(k)$ for any $k$,
we see that $|\widehat{H}_2^{-1/2}(k)|\lesssim |k|^{-1}$.

This allows us to write
\begin{equation*}
  \widehat{ H}^{-1}(k) = \widehat{H}_2^{-1/2}\big(\Id+\widehat{H}_2^{-1/2}\widehat{S}\widehat{ H}_2^{-1/2}\big)^{-1}\widehat{H}_2^{-1/2},
\end{equation*}
for all $k \neq 0$ sufficiently small as we have
\begin{equation*}
  |\widehat{H}_2^{-1/2}\widehat{S}\widehat{H}_2^{-1/2}|\lesssim |k|^2.
\end{equation*}
In a neighbourhood of $0$, we thus have the absolutely convergent series representation
\begin{align*}
  \widehat{H}^{-1}
  &= \widehat{H}_2^{-1/2}\bigg( \sum_{j=0}^\infty (-1)^j
    \Big(\widehat{H}_2^{-1/2}\widehat{S}\widehat{H}_2^{-1/2}\Big)^j
    \bigg)\widehat{H}_2^{-1/2}\\
  &=\widehat{H}_2^{-1}- \widehat{H}_2^{-1}\widehat{S}\widehat {H}_2^{-1} +
    {H}_2^{-1}\widehat{S}\widehat {H}_2^{-1}\widehat{S}\widehat {H}_2^{-1}
    -\cdots.
\end{align*}
Now, by inserting 
\[ 
  \widehat{S} = \widehat{H} - \widehat{H}_2
  = \sum_{n \geq 2} \widehat{H}_{2n},
\] 
and re-summing, we may express $H^{-1}$ as a series of terms with increasing
homogeneity:
\begin{equation}\label{eq:HinvSeries}
\begin{aligned}
  \widehat{H}^{-1}
  &= \underbrace{\widehat{H}_2^{-1}}_{=:\mathcal A_{-2}}
  +\,\underbrace{-\widehat{ H}_2^{-1}{\widehat{H}_4\widehat {H}_2^{-1}}}_{=:\mathcal{A}_0}\\
  &\qquad\quad
    +\underbrace{\big(\widehat{H}_2^{-1}\widehat{H}_4\widehat{H}_2^{-1}\widehat{H}_4\widehat{H}_2^{-1}
    -\widehat{\mathcal H}^{-1}_2\widehat{H}_6\widehat{H}^{-1}_2\big)}_{=:\mathcal A_2}+\cdots\\
  &=\sum_{n=-1}^\infty \mathcal{A}_{2n}(k).
\end{aligned}
\end{equation}
Explicitly, $\mathcal{A}_{2n}(k)$ is defined by the finite sum
\begin{equation}\label{eq:A2n_def}
\begin{aligned}
\mathcal{A}_{2n}(k) &= \sum_{j=1}^{n+1} \sum_{\boldsymbol{\alpha} \in \mathbf{A}_{j,2n}} \widehat{H}_2^{-1}\widehat{H}_{\boldsymbol{\alpha}_1}\widehat{H}_2^{-1}\cdots \widehat{H}_2^{-1}\widehat{H}_{\boldsymbol{\alpha}_j}\widehat{H}_2^{-1}, \quad \text{where}\\
\mathbf{A}_{j,2n} &= \{ \boldsymbol{\alpha} \in (2\N)^j : \boldsymbol{\alpha}_i \geq 4\ \forall i\ \text{and}\  \boldsymbol{\alpha}_1+ \dots \boldsymbol{\alpha}_j -2j-2 = 2n \}.
\end{aligned}
\end{equation}
This means $\mathcal{A}_{2n}$ is $2n$-homogeneous.

In cases where the first term in the expansion of $H_2$ is a multiple of the identity,
i.e. $\widehat{H}_2(k)=h_2(k)\Id$, the series simplifies to
\begin{equation*}
  H^{-1} = h_2^{-1}\Id-h_2^{-2}\widehat{H}_4
    +h_2^{-3}\big(\widehat{H}_4^2
    -h_2\widehat{H}_6\big)+\cdots,
\end{equation*}
and yet further simplification can be made if $H$ is a multiple of the identity matrix, e.g., in the scalar setting $N=1$, which is the case considered in \cite{MR02}.

\subsection{The lattice Green's function and its development at infinity}
\label{sec:latGr_develop}
With the previous series development complete, we may introduce and analyse the lattice Green's function, which acts as an inverse to the lattice operator $H$.

If $d\geq 3$, we define the lattice Green's function to be
\begin{align*}
\mathcal{G}(l):=\frac{1}{\lvert \mathcal{B} \rvert}\int_{\mathcal{B}} \hat{H}^{-1}(k) e^{ik \ell} \, dk.
\end{align*}
and if $d=2$, we set
\[\mathcal{G}(\ell) :=\lim_{\delta \to 0} \Big( \frac{1}{\lvert \mathcal{B} \rvert}\int_{\mathcal{B} \backslash B_{\delta}(0)} \hat{H}^{-1}(k) e^{ik \ell} \, dk + (\log \delta + \gamma) \frac{1}{\lvert \mathcal{B} \rvert} \int_{\mathbb{S}^1} \mathcal{A}_{-2}(\sigma)  \, d\sigma  \Big) \]
where $\gamma$ is the Euler–Mascheroni constant. To demonstrate that the limit in the latter definition exists, we take $0 < \delta < \eps$ and calculate
\begin{align*}
\hspace{1.5cm} &\hspace{-1.5cm} \frac{1}{\lvert \mathcal{B} \rvert}\int_{B_{\eps}(0) \backslash B_{\delta}(0)} \hat{H}^{-1}(k) e^{ik \ell} \, dk\\
 &= O(\eps^2) + \frac{1}{\lvert \mathcal{B} \rvert}\int_{B_{\eps}(0) \backslash B_{\delta}(0)} \mathcal{A}_{-2}(k) e^{ik\ell} \, dk \\
  &= O(\eps^2) + \frac{1}{\lvert \mathcal{B} \rvert}\int_{\delta}^{\eps} \int_{\mathbb{S}^1}r^{-1} \mathcal{A}_{-2}(\sigma) e^{i\sigma \ell r} \, d\sigma  dr \\
  &= O(\eps^2) + O(\lvert  \ell \rvert \eps) + \frac{\log( \eps) - \log( \delta)}{\lvert \mathcal{B} \rvert} \int_{\mathbb{S}^1} \mathcal{A}_{-2}(\sigma)  \, d\sigma.
\end{align*}
In the above argument, we have used that $\hat{H}^{-1}-\mathcal{A}_{-2}$ is bounded in a neighbourhood of $k=0$, and the fact that $|1-e^{i\sigma \ell r}|=O(|\ell| r)$ for all suitably small $r$. Rearranging, we have established a uniform bound independent of $\delta$, and hence the limit exists.

We now use the series expansion \eqref{eq:HinvSeries} to define a series of functions $G_{n}$ which approximate $\mathcal{G}$. Note that $\mathcal{A}_{2n-2}$ is locally integrable and defines a tempered distribution unless both $n=0$ and $d=2$. Except in this special case, we therefore define
\begin{equation*}
G_{n} := c_{\rm vol} \mathcal{F}_c^{-1} [\mathcal{A}_{2n-2}].
\end{equation*}
In the special case where $n=1$ and $d=2$ we define
\begin{align*} 
  G_0 &:=  \lim_{\delta \to 0} \bigg( 
      \frac{c_{\rm vol}}{(2 \pi)^2}\int_{\R^2 \backslash B_{\delta}(0)} \mathcal{A}_{-2}(k) e^{ik \ell} \, dk  \\ 
    & \hspace{1.5cm}
    + (\log \delta + \gamma) \frac{c_{\rm vol}}{(2 \pi)^2} \int_{\mathbb{S}^1} \mathcal{A}_{-2}(\sigma)  \, d\sigma  \bigg)
\end{align*}
in the distributional sense, i.e.
\begin{align*} 
    \langle G_0, \varphi \rangle 
    &=  \lim_{\delta \to 0} \bigg( \frac{c_{\rm vol}}{(2 \pi)^2} \int_{\R^2 \backslash B_{\delta}(0)} \mathcal{F} [\varphi](k) \mathcal{A}_{-2}(k)\,dk  \\ 
    & \hspace{1.5cm} +  c_{\rm vol}\frac{\log \delta + \gamma}{(2 \pi)^2} \mathcal{F}[\varphi](0)   \int_{\mathbb{S}^1} \mathcal{A}_{-2}(\sigma)  \, d\sigma  \bigg)
\end{align*} 
for any $\varphi \in \mathcal{S}(\R^d; \R^N)$.

With this definition a simple direct calculation also shows that
\[H^{\rm C}[G_0 e_k] = e_k \delta_0\]
and indeed $G^{\rm C}=G_0$.

\subsection{Alternative representation of \texorpdfstring{$G_n$}{Gn}}
We now connect the definitions made above to alternative representations considered by Morrey in \cite[chap.\ 6.2]{m66}. These alternative representations will provide us with the ability to directly deduce regularity, decay, and even homogeneity. Furthermore, this representation is promising for computational uses as only a finite surface integral in Fourier space is required for its evaluation.

We begin by setting $P = \lceil\frac{d+2n-1}{2}\rceil$, and $h= 2P+2-2n-d$, which we note satisfies $h\geq 1$. Then, we define
\begin{equation} \label{eq:morreykernel}
G_{n}^{\mathcal{M}}(\ell) = (-\Delta)^P \frac{c_{\rm vol}}{(2 \pi)^d}  \int_{\mathbb{S}^{d-1}} \mathcal{A}_{2n-2}(\sigma) J_{-1-h}(\ell \cdot \sigma) \,d\sigma,
\end{equation}
for $\ell \neq 0$, where $\Delta$ is the Laplacian, and
\begin{align*}
J_l(w) &= l! (-iw)^{-l-1}, &\text{ for } l \geq 0\\
J_l(w) &= \frac{1}{(-l-1)!} (iw)^{-l-1}\Big(-\log(-iw) + \sum_{j=1}^{-l-1} j^{-1}\Big), &\text{ for } l < 0.
\end{align*}
We remark that these functions satisfy $J_l'(w)=iJ_{l+1}(w)$. We also use the following definition from \cite[Def.\ 6.1.4]{m66}.
\begin{definition}
  A function $\varphi \colon \R^d \to \R^{N \times N}$ is called essentially homogeneous of order $s$ if either
  \begin{itemize}
  \item $s<0$ and the components of $\varphi$ are all positively homogeneous of order $s$, or 
  \item $s \in \N_0$ and $\varphi(\ell) = \varphi_1(\ell) + \varphi_2(\ell) \log \lvert \ell \rvert$, where the components of $\varphi_1$ are positively homogeneous of order $s$, and the components of $\varphi_2$ are homogeneous polynomials in the components of $\ell$ of degree $s$.
  \end{itemize}
\end{definition}
We directly want to point out though, that for the purpose of our results here we are only interested in the cases $s<0$ and $s=0$. So either $\varphi$ is positively homogeneous of a negative degree, or $\varphi(\ell) = \varphi_1(\ell) + A \log \lvert \ell \rvert$, where $A$ is a constant and $\varphi_1$ is $0$-homogeneous. With this definition in place, we state the following result.
\begin{lemma}
$G_{n}^{\mathcal{M}}$ is well-defined, smooth for $\ell \neq 0$, and $G_{n}^{\mathcal{M}}$ is essentially $(2-2n-d)$-homogeneous. Furthermore, $G_{n}^{\mathcal{M}}$ is positively $(2-2n-d)$-homogeneous if either $2-2n-d<0$ or $d$ is odd.
\end{lemma}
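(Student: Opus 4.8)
The plan is to peel off the outer operator $(-\Delta)^P$ and work with the surface integral
\[
F(\ell) := \frac{c_{\rm vol}}{(2\pi)^d}\int_{\mathbb{S}^{d-1}}\mathcal{A}_{2n-2}(\sigma)\,J_{-1-h}(\ell\cdot\sigma)\,d\sigma,
\]
so that $G_n^{\mathcal{M}} = (-\Delta)^P F$. Since $h\ge 1$, the explicit formula gives $J_{-1-h}(w) = \frac{(iw)^h}{h!}\big(-\log(-iw)+\sum_{j=1}^h j^{-1}\big)$, which extends continuously across $w=0$; hence the integrand is continuous on the compact sphere and $F(\ell)$ is finite for every $\ell$. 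Once I show that $F\in C^\infty(\R^d\setminus\{0\})$ and is essentially homogeneous, the operator $(-\Delta)^P$ acts classically, and both the smoothness and the homogeneity of $G_n^{\mathcal{M}}$ follow. So the real content is (i) homogeneity of $F$, (ii) smoothness of $F$ away from the origin, and (iii) tracking the logarithmic coefficient through $(-\Delta)^P$.

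First I would establish the homogeneity. From $J_{-1-h}(\lambda w) = \lambda^h\big(J_{-1-h}(w) - \frac{(iw)^h}{h!}\log\lambda\big)$ for $\lambda>0$, I obtain $F(\lambda\ell) = \lambda^h F(\ell) + \lambda^h\log\lambda\,\varphi_2(\ell)$, where $\varphi_2(\ell) := -\frac{c_{\rm vol}}{(2\pi)^d}\frac{i^h}{h!}\int_{\mathbb{S}^{d-1}}\mathcal{A}_{2n-2}(\sigma)(\ell\cdot\sigma)^h\,d\sigma$ is a homogeneous polynomial of degree $h$. Thus $F = \varphi_1 + \varphi_2\log|\cdot|$ is essentially homogeneous of degree $h$, with $\varphi_1$ positively $h$-homogeneous.

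The hard part is the smoothness, and the naive route fails: differentiating $J_{-1-h}(\ell\cdot\sigma)$ more than $h$ times under the integral produces the kernels $J_{-1-h+m}$, whose singularity across the equator $\{\ell\cdot\sigma=0\}$ is non-integrable once $m>h$, while $(-\Delta)^P$ needs $2P\gg h$ derivatives. The device I would use is rotation invariance: because the kernel depends on $(\ell,\sigma)$ only through $\ell\cdot\sigma$ and the surface measure is $\SO(d)$-invariant, $F(R\omega) = \frac{c_{\rm vol}}{(2\pi)^d}\int_{\mathbb{S}^{d-1}}\mathcal{A}_{2n-2}(R\sigma)\,J_{-1-h}(\omega\cdot\sigma)\,d\sigma$ for every $R\in\SO(d)$. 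Differentiating along one-parameter rotation subgroups transfers each tangential derivative off the singular kernel and onto the smooth density $\mathcal{A}_{2n-2}$, leaving $J_{-1-h}$ (which is bounded, hence integrable) undifferentiated; since the rotation fields span $T_\omega\mathbb{S}^{d-1}$ at every $\omega$, all iterated tangential derivatives of $F|_{\mathbb{S}^{d-1}}$ exist and are continuous, so $F|_{\mathbb{S}^{d-1}}\in C^\infty$. Writing $\varphi_1(\ell) = |\ell|^h F(\ell/|\ell|)$ on restriction and recalling that $\varphi_2$ is a polynomial then yields $F\in C^\infty(\R^d\setminus\{0\})$, whence $G_n^{\mathcal{M}}=(-\Delta)^P F\in C^\infty(\R^d\setminus\{0\})$.

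Finally I would push the structure through $(-\Delta)^P$. A short Leibniz computation, using $\Delta\log|\ell| = (d-2)|\ell|^{-2}$, shows that $\Delta$ lowers the essential-homogeneity degree by two and sends the logarithmic coefficient $\varphi_2$ to $\Delta\varphi_2$; iterating, $G_n^{\mathcal{M}}$ is essentially homogeneous of degree $h-2P = 2-2n-d$ with logarithmic coefficient $(-\Delta)^P\varphi_2$. The log term then vanishes in exactly the two stated cases. If $2-2n-d<0$ the coefficient is a homogeneous polynomial of negative degree, hence zero. If $d$ is odd then $h=2P+2-2n-d$ is odd, and since $\mathcal{A}_{2n-2}$ is even---being a product of the even multipliers $\widehat{H}_{2m}$ and $\widehat{H}_2^{-1}$---the substitution $\sigma\mapsto-\sigma$ gives $\varphi_2 = (-1)^h\varphi_2 = -\varphi_2$, so $\varphi_2\equiv0$ and a fortiori $(-\Delta)^P\varphi_2=0$. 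In either case $G_n^{\mathcal{M}}$ is positively $(2-2n-d)$-homogeneous, completing the proof.
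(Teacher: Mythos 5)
Your proof is correct, but it takes a genuinely different route from the paper: the paper offers no argument of its own for this lemma, instead citing Morrey's Theorem 6.2.1 \cite{m66} with the single remark that the matrix-valued nature of $\mathcal{A}_{2n-2}$ changes nothing. You have reconstructed the content of that citation in a self-contained way, and each ingredient checks out. The scaling identity $J_{-1-h}(\lambda w)=\lambda^h\bigl(J_{-1-h}(w)-\tfrac{(iw)^h}{h!}\log\lambda\bigr)$ correctly yields essential $h$-homogeneity of the surface integral $F$ with an explicit polynomial logarithmic coefficient $\varphi_2$. Your rotation-invariance device is the right way around the genuine obstruction you identify (differentiating the kernel more than $h$ times under the integral is impermissible, since $J_{-1-h+m}$ has a non-integrable singularity across $\{\ell\cdot\sigma=0\}$ once $m>h$): the change of variables $\sigma\mapsto R\sigma$ moves derivatives onto $\mathcal{A}_{2n-2}$, which is smooth on the sphere, and the resulting expressions retain the same integral form, so the argument iterates to all orders; combined with $F(r\omega)=r^hF(\omega)+r^h\log r\,\varphi_2(\omega)$ this gives smoothness away from the origin. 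The Leibniz bookkeeping through $(-\Delta)^P$ is also right, and both vanishing mechanisms for the log term are sound: $(-\Delta)^P$ annihilates the degree-$h$ polynomial $\varphi_2$ when $h<2P$, i.e.\ when $2-2n-d<0$, while for odd $d$ the oddness of $h$ together with the evenness of $\mathcal{A}_{2n-2}$ forces $\varphi_2\equiv 0$ — and you rightly deduce that evenness from the structure of $\mathcal{A}_{2n-2}$ as products of the even multipliers $\widehat{H}_{2m}$ and $\widehat{H}_2^{-1}$, since it does not follow from even homogeneity alone. What the paper's citation buys is brevity and access to Morrey's general theory of essentially homogeneous kernels; what your argument buys is transparency: it makes explicit why matrix-valuedness is harmless and exactly which cancellation kills the logarithm, which is the same mechanism (powers of the Laplacian annihilating polynomials of degree less than $2P$) that the paper invokes again later when identifying $G_n$ with $G_n^{\mathcal{M}}$.
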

\begin{proof}
This is part of \cite[Thm. 6.2.1]{m66}. Note that $\mathcal{A}_{2n}$ being matrix-valued does not change the argument.
\end{proof}
In our setting, the only case that might fail to be positively homogeneous is $G_{0}^{\mathcal{M}}$ for $d=2$. As pointed out above, in that case essentially homogeneous means that $G_{0}^{\mathcal{M}} (\ell) = \varphi(\ell) + C_1 \log \lvert \ell \rvert$, where $\varphi$ is positively $0$-homogeneous and $C_1 \in \R^{N \times N}$.

We will now show that $G_{n}$ and $G_{n}^{\mathcal{M}}$ are in fact identical. Let us first give a short motivation for $G_{n}^{\mathcal{M}}$. The basic idea of the definition \eqref{eq:morreykernel} is to use the homogeneity of $\mathcal{A}_{2n-2}$ to isolate the radial component in the Fourier integral and then integrate it out. Naïvely, this approach leads to a integral of the form $\int_0^\infty r^l e^{irw} \,dr$ which does not exist for any $l$ and real $w$. The idea behind the $P$ Laplacians is to ensure that the resulting $l$ is non-negative. If one then adds a small imaginary part to $w$, the integral is indeed finite and can be computed. This leads to the $J_l$.

\begin{lemma}
On $\R^d \backslash \{0\}$ the distribution $G_{n}$ is represented by a function which we will also call $G_{n}$ and we have $G_{n}= G_{n}^{\mathcal{M}}$ on $\R^d \backslash \{0\}$.
\end{lemma}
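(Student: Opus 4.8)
The plan is to identify both $G_n$ and $G_n^{\mathcal M}$ as inverse Fourier transforms of the single homogeneous symbol $c_{\rm vol}\mathcal A_{2n-2}$, and then to upgrade equality of tempered distributions to pointwise equality on $\R^d\setminus\{0\}$. Since $G_n$ is \emph{defined} to be $c_{\rm vol}\mathcal F_c^{-1}[\mathcal A_{2n-2}]$, it suffices to prove the single Fourier-inversion identity $\mathcal F_c[G_n^{\mathcal M}]=c_{\rm vol}\mathcal A_{2n-2}$ on $\R^d\setminus\{0\}$. The remaining work then splits into a soft uniqueness argument and the verification of this identity.

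For the soft part I would argue as follows. By the preceding lemma $G_n^{\mathcal M}$ is essentially $(2-2n-d)$-homogeneous and smooth away from the origin, while $G_n$ is essentially $(2-2n-d)$-homogeneous because $\mathcal A_{2n-2}$ is $(2n-2)$-homogeneous and the Fourier transform sends homogeneity of degree $s$ to degree $-d-s$. Once the inversion identity is known, $\mathcal F_c[G_n-G_n^{\mathcal M}]$ is supported at the origin, so $G_n-G_n^{\mathcal M}$ is a polynomial; being simultaneously a polynomial and essentially homogeneous of degree $2-2n-d$ forces its logarithmic part to vanish and leaves a genuinely $(2-2n-d)$-homogeneous polynomial, which must be zero whenever $2-2n-d<0$. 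This disposes of every case except $n=0$, $d=2$, where the degree is $0$ and the only possible discrepancy is an additive constant.

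I would establish the inversion identity along the lines sketched just before the statement, making the formal radial computation rigorous. Testing against $\varphi\in\mathcal S$ and moving $(-\Delta)^P$ onto $\varphi$ by self-adjointness, one passes to polar coordinates $k=r\sigma$ to separate the angular integral $\int_{\mathbb S^{d-1}}\mathcal A_{2n-2}(\sigma)(\cdots)\,d\sigma$ from the radial integral $\int_0^\infty r^{-1-h}e^{ir(\ell\cdot\sigma)}\,dr$. The latter, interpreted in the regularised finite-part sense dictated by the recurrence $J_l'=iJ_{l+1}$, is exactly $J_{-1-h}(\ell\cdot\sigma)$, while $(-\Delta)^P$ reinstates the factor $|k|^{2P}$ that was divided out in the choice $h=2P+2-2n-d$. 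This is the content of \cite[Thm.~6.2.1]{m66}, which I would cite for the identity after checking that the matrix-valued $\mathcal A_{2n-2}$ and the normalisation constants $c_{\rm vol}$ and $(2\pi)^{-d}$ are propagated correctly through Morrey's representation.

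The main obstacle is the borderline case $n=0$, $d=2$ (so $P=1$, $h=2$), where $\mathcal A_{-2}$ is homogeneous of the critical degree $-d$, is not locally integrable, and both kernels are only essentially $0$-homogeneous, i.e.\ a constant multiple of $\log|\ell|$ plus a $0$-homogeneous term. Here the soft argument leaves an undetermined additive constant, and the two definitions regularise the singular Fourier integral in different ways: the definition of $G_0$ uses the $\delta\to0$ limit with the explicit $(\log\delta+\gamma)$ counterterm, whereas $G_0^{\mathcal M}$ carries the $\log(-iw)$ sitting inside $J_{-3}$. The resolution is to reconcile these two regularisations directly. The $\log|\ell|$ coefficients agree automatically, since the above argument shows the logarithmic parts must match and both are fixed by the spherical average $\int_{\mathbb S^1}\mathcal A_{-2}(\sigma)\,d\sigma$; the residual constant is then pinned down using that $G_0=G^{\rm C}$ is already normalised by $H^{\rm C}[G_0e_k]=e_k\delta_0$. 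Carefully matching the Euler–Mascheroni constant $\gamma$ against the constants generated by the $J_l$-expansion is the one genuinely delicate computation.
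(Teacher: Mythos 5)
Your high-level route (Fourier uniqueness plus a homogeneity classification) is genuinely different from the paper's proof, which is one direct computation, and for most cases it can be made to work: for $n\geq 1$ or $d\geq 3$ the degree $2-2n-d$ is negative, a polynomial that is positively homogeneous of negative degree must vanish, and reducing everything to the identity $\mathcal{F}_c[G_n^{\mathcal{M}}]=c_{\rm vol}\mathcal{A}_{2n-2}$ away from the origin is sound. (One fillable technicality: for $n\geq 1$ the kernel $G_n^{\mathcal{M}}$ is homogeneous of degree $\leq -d$, hence not locally integrable at $0$, so you must first fix a tempered extension before speaking of its Fourier transform; different extensions differ by derivatives of $\delta_0$, i.e.\ by polynomials on the physical side, which your argument absorbs.)

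The genuine gap is the critical case $d=2$, $n=0$ --- precisely the case $G_0=G^{\rm C}$ that the paper needs for screw dislocations. There your soft argument leaves an undetermined additive constant matrix, and your proposed fix --- pinning it down ``using that $G_0=G^{\rm C}$ is normalised by $H^{\rm C}[G_0e_k]=e_k\delta_0$'' --- cannot work: constant displacements lie in the kernel of the second-order operator $H^{\rm C}$, so $G_0$ and $G_0+C$ satisfy the same fundamental-solution equation for every constant matrix $C$. The fundamental-solution property is exactly the kind of information that is blind to the constant you need, so this step is circular. The constant is not cosmetic: the lemma asserts exact equality, and Theorem \ref{thm:greensfunctionexpansion} with $j=0$, $p=0$, $d=2$ requires $\lvert \G(\ell)-G_0(\ell)\rvert \lesssim \abs{\ell}^{-2}$, which a constant offset would destroy. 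The only way to settle it is the explicit regularisation-matching that the paper carries out: one proves $J_{-h-1}(w)=\int_\delta^\infty r^{-h-1}e^{iwr}\,dr + p_h^\delta(w) + \delta (iw)^{h+1}\varphi_h(\delta i w)$ with an explicit polynomial $p_h^\delta$, and observes that $(-\Delta)^P p_h^\delta(\ell\cdot\sigma)$ vanishes in all cases except $d=2$, $n=0$, where it equals exactly $\gamma+\log\delta$ --- reproducing, term by term, the Euler--Mascheroni counterterm in the definition of $G_0$, so the two regularisations agree with no constant left over. In other words, the ``genuinely delicate computation'' you defer is not optional and cannot be replaced by the normalisation argument you propose; it is the actual content of the proof in the one case your method does not cover.
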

\begin{proof}
As discussed in the motivation directly before the Lemma, we first let $\Im(w)>0$ and $l \geq 0$. Then
\[J_l(w) = l! (-iw)^{-l-1} = \int_0^{\infty} r^l e^{iwr}\,dr.\]
Take any $\delta>0$, and let $[i,w]$ be the line segment in the complex plane, for which we note that $z\in[i,w]$ implies $\Im(z)>0$.

Define
\[\varphi_0(z) := \sum_{k=0}^{\infty} \frac{z^k}{(k+1) (k+1)!},\]
which is easily seen to satisfy $(z\varphi_0(z))' = \frac{e^z-1}{z}$. Using the definition of $J_0$ and the definition and property of $\varphi_0$ just noted, we then find
\begin{align}
  J_{-1} (w)
  &= - \log(-iw) \nonumber\\
  &= i\int_{[i,w]} J_0(z)\,dz\nonumber\\
  &= i\int_{[i,w]} \int_{\delta}^\infty e^{izr}\,dr  + \frac{e^{iz\delta} -1}{iz} \,dz \nonumber\\
  &= \int_\delta^\infty r^{-1}\big( e^{iwr}- e^{-r}\big)\,dr +\delta iw \varphi_0(\delta i w)+ \delta \varphi_0(-\delta)\nonumber\\
&= \int_{\delta}^\infty r^{-1} e^{iwr}\,dr  + p^\delta_0(w) + \delta iw \varphi_0(\delta i w), \label{eq:J-1form}
\end{align}
where we define
\[p^\delta_0(w) :=   -\int_\delta^\infty r^{-1} e^{-r}\,dr + \delta \varphi_0(-\delta).\]
In \eqref{eq:J-1form}, the first term is the desired integral for $l=-1$, the second term is a renormalization of the singular part for $r$ close to $0$, and the third term is of lesser importance and will go to zero at the end.

We can simplify $p^\delta_0(w)$ further, as the exponential integral $E_1$ satisfies
\[E_1(z) =\int_{[z,\infty]} u^{-1} e^{-u}\,du = -\gamma - \log(z) + z \varphi_0(-z)\]
for $z \in \C \backslash \R_{\leq 0}$, where $\gamma$ is the Euler–Mascheroni constant, and $[z,\infty]$ is any contour in $\C \backslash \R_{\leq 0}$ connecting $z$ to positive real infinity. It follows that
\[p^\delta_0(w) = -\int_\delta^\infty r^{-1} e^{-r}\,dr + \delta \varphi_0(-\delta) = \gamma + \log(\delta).\]

Now, defining \[\varphi_h(z) = \sum_{k=0}^{\infty} \frac{z^k}{(k+1) (k+h+1)!}\]
and
\begin{align*}
p^\delta_h(w) &:= i\int_{[0,w]} p^\delta_{h-1}(z)\,dz - \frac{1}{h} \delta^{-h}\\
&= \frac{i^h}{h!}  w^h \big( \gamma + \log(\delta) \big)  - \sum_{k=0}^{h-1} \frac{i^k}{k!}  w^k \frac{1}{h-k} \delta^{k-h},
\end{align*}
for $h \geq 0$ we may calculate inductively that
\begin{align*}
J_{-h-1} (w) &= i\int_{[0,w]} J_{-h}(z)\,dz\\
&= i\int_{[0,w]} \int_{\delta}^\infty r^{-h}e^{izr}\,dr  + p^\delta_{h-1}(z) + \delta (iz)^h \varphi_{h-1}(\delta i z) \,dz \\
&= \int_{\delta}^\infty r^{-h-1} e^{iwr}\,dr  + p^\delta_h(w) + \delta (iw)^{h+1} \varphi_{h}(\delta i w).
\end{align*}

This calculation was restricted to $\Im(w)>0$ to ensure that the integrals converge. But for $h \geq 1$ that is true even for real $w$. So we now can take the limit $\Im(w) \to 0$ and see that for all $w \in \R$, $\delta >0$, and $h \geq 1$, we have that
\begin{align*}
J_{-h-1} (w) &= \int_{\delta}^\infty r^{-h-1} e^{iwr}\,dr  + p^\delta_h(w) + \delta (iw)^{h+1} \varphi_{h}(\delta i w).
\end{align*}

Using the definition in \eqref{eq:morreykernel}, we have obtained the following representation for $G_{n}^{\mathcal{M}}$:
\begin{align*}
  G_{n}^{\mathcal{M}}(\ell)
  =  \frac{c_{\rm vol }(-\Delta)^P}{(2 \pi)^d} \bigg( &\int_{\R^d \backslash B_{\delta}(0)} \frac{\mathcal{A}_{2n-2}(k)}{ \lvert k \rvert^{2P}} e^{i\ell k}\,dk \\
  &\quad+ \int_{\mathbb{S}^{d-1}} \mathcal{A}_{2n-2}(\sigma) p^\delta_h(\ell \sigma)d\sigma\\
  &\qquad+\int_{\mathbb{S}^{d-1}}\mathcal{A}_{2n-2}(\sigma)\delta (i\ell \sigma)^{h+1} \varphi_{h}(\delta i \ell \sigma) \,d\sigma\bigg).
\end{align*}
We now inspect the three integrals in turn. Clearly, for the first 
\[(-\Delta)^P  \int_{\R^d \backslash B_{\delta}(0)} \frac{\mathcal{A}_{2n-2}(k)}{ \lvert k \rvert^{2P}} e^{i\ell k}\,dk = \int_{\R^d \backslash B_{\delta}(0)} \mathcal{A}_{2n-2}(k)e^{i\ell k}\,dk\]
in the sense of tempered distributions. 
For the second integral, as $p^\delta_h$ is a polynomial of degree $h = 2P +2 -2n -d$, we find $ (-\Delta)^P p^\delta_h ( \ell \sigma)=0$ except when $d=2$ and $n=0$, in which case  $ (-\Delta)^P p^\delta_h ( \ell \sigma)= \gamma + \log \delta$. The integrand in the final integral contains the factor
$(-\Delta)^P \delta (i\ell \sigma)^{h+1} \varphi_{h}(\delta i \ell \sigma)$, which goes to $0$ uniformly on compact sets as $\delta \to 0$. Since $\delta$ was arbitrary, we may pass to this limit, and we do indeed find $G_m= G_m^{\mathcal{M}}$ on $\R^d \backslash \{0\}$.
\end{proof}

%
%

After having constructed the kernels $G_n$ and having established their properties, we can now prove the expansion of the lattice Green's function.

\begin{proof}[Proof of Theorem \ref{thm:greensfunctionexpansion}]
We have already established the regularity and homogeneity properties of the $G_n$. In the following, fix $j \geq 0$ and $\boldsymbol{\rho}=(\rho_1, \dots, \rho_j)  \in \mathcal{R}^j$. We then still need to show that 
\begin{equation} \label{eq:GGpdiffstrong}
\bigg\lvert D_{\boldsymbol{\rho}} \G(\ell) - \sum_{i=0}^p D_{\boldsymbol{\rho}} G_n(\ell)\bigg\rvert \leq C_{p} \abs{\ell}^{-2p-d-j}
\end{equation}
for all $\ell$, $p$. We claim it is in fact sufficient to show the result holds for any $p$ but where the error decays with two orders less, i.e.
\begin{equation} \label{eq:GGpdiffweak}
\bigg\lvert D_{\boldsymbol{\rho}} \G(\ell) - \sum_{i=0}^p D_{\boldsymbol{\rho}} G_n(\ell)\bigg\rvert \leq C_{p} \abs{\ell}^{2-2p-d-j}
\end{equation}
for all $\ell$. If this estimate holds, then using equation \eqref{eq:GGpdiffweak} with $p+1$ and the triangle inequality entails
\begin{equation*}
\bigg\lvert D_{\boldsymbol{\rho}} \G(\ell) - \sum_{i=0}^p D_{\boldsymbol{\rho}} G_n(\ell) \bigg\rvert \leq \lvert D_{\boldsymbol{\rho}} G_{p+1}(\ell)\rvert + C_{p+1} \abs{\ell}^{-2p-d-j},
\end{equation*}
but since we have that $D_{\boldsymbol{\rho}} G_{p+1}(\ell) = O(\lvert \ell \rvert^{-2p-d-j})$ due to the homogeneity of $G_{p+1}$, this establishes \eqref{eq:GGpdiffstrong}.

To show \eqref{eq:GGpdiffweak}, fix a smooth cutoff $\eta \colon \R^d \to [0,1]$ with $\eta(k)=1$ in a neighbourhood of $0$ and $\supp \eta \subset \subset \mathcal{B}$. Then set $G_{n}^\eta := c_{\rm vol}\mathcal{F}^{-1}_c [\mathcal{A}_{2n-2} \eta]$ and $G_{n}^{1-\eta} := c_{\rm vol}\mathcal{F}^{-1}_c [\mathcal{A}_{2n-2} (1-\eta)]$. If $n=0$ and $d=2$, set
\[ G_0^\eta :=  \lim_{\delta \to 0} \Big( \frac{c_{\rm vol}}{(2 \pi)^2}\int_{\R^2 \backslash B_{\delta}(0)} \mathcal{A}_{-2}(k) \eta(k) e^{ik \ell} \, dk + c_{\rm vol} \frac{\log \delta + \gamma}{ (2 \pi)^2} \int_{\mathbb{S}^1} \mathcal{A}_{-2}(\sigma)  \, d\sigma  \Big) \]
instead. If $2n-2 > -d $, note that $\mathcal{A}_{2n} \eta$ is integrable and has support in $\mathcal{B}$, therefore
\[ G_{n}^\eta =\frac{1}{\lvert \mathcal{B} \rvert}\int_{\mathcal{B}} \mathcal{A}_{2n-2}(k) \eta(k) e^{ik \ell} \, dk, \]
as well as
\[G_{0}^\eta =\lim_{\delta \to 0} \Big( \frac{1}{\lvert \mathcal{B} \rvert}\int_{\mathcal{B} \backslash B_{\delta}(0)} \mathcal{A}_{-2}(k) \eta(k) e^{ik \ell} \, dk + (\log \delta + \gamma) \frac{1}{\lvert \mathcal{B} \rvert} \int_{\mathbb{S}^1} \mathcal{A}_{-2}(\sigma)  \, d\sigma  \Big) \]

In all cases we find
\begin{align}
& \big(D_{\boldsymbol{\rho}} \mathcal{G}(\ell) - \sum_{n=0}^p D_{\boldsymbol{\rho}} G^{\eta}_{n}(\ell)\big) \lvert \ell \rvert^{2m} \nonumber \\
&\quad = \frac{1}{\lvert \mathcal{B} \rvert}\int_{\mathcal{B}} (-\Delta)^m \Big(\big( \hat{H}^{-1}(k)- \eta(k) \sum_{n=0}^p \mathcal{A}_{2n-2}(k) \big)    \prod_{s=1}^j (e^{i\rho_s k} -1)\Big) e^{ik \ell}  \, dk\nonumber  \\
&\quad = \frac{1}{\lvert \mathcal{B} \rvert}\int_{\mathcal{B}} r_m(k) e^{ik \ell}  \, dk, \label{eq:Gseries1}
\end{align}
with
\[
r_m(k) := (-\Delta)^m \Big(\big( \hat{H}^{-1}(k)- \eta(k) \sum_{n=0}^p \mathcal{A}_{2n-2}(k) \big)    \prod_{s=1}^j (e^{i\rho_s k} -1)\Big)
\]
as long as $m$ is small enough to ensure that $r_m$ is integrable. The smoothness of $\eta$, and the properties of $\hat{H}^{-1}$ and $\mathcal{A}_{2n}$ discussed in \S\ref{sec:four-mult-seri} ensure that $r_m$ is smooth away from $k=0$ and is bounded by $C \lvert k \rvert^{2p-2m + j}$ in a neighbourhood of $k=0$. It follows that $r_m \in L^1$ if $2m \leq 2p  + j + (d-1)$ and therefore \eqref{eq:Gseries1} implies
\[D_{\boldsymbol{\rho}} \mathcal{G} - \sum_{n=0}^p D_{\boldsymbol{\rho}} G^{\eta}_{n} = O(\lvert \ell \rvert^{2-2p-d-j}). \]
For $G_{n}^{1-\eta}$, we observe that $D^{\alpha} \big( \mathcal{A}_{2n-2} (1-\eta)\big)$ is integrable for all sufficiently large $\lvert \alpha \rvert$, and therefore $G_{n}^{1-\eta}$ has super-algebraic decay at infinity. The same therefore also holds true for any $D_{\boldsymbol{\rho}} G_{n}^{1-\eta}$.

The only remaining claim now is the uniqueness. Assume we have two such series of kernels $(G_n)_n$ and $(\tilde{G}_n)_n$. By induction, for $p\in \N_0$ let us assume we already know that $G_n = \tilde{G}_n$ for all $n<p$, we need to show that $G_p = \tilde{G}_p$.

First exclude the case where both $p=0$ and $d=2$. Then we know that $G_p$ and $\tilde{G}_p$ are positively homogeneous of order $(2-2p-d)$. Using both estimates of order $p$ without any derivatives, we obtain
\begin{align*}
\Big\lvert  G_p(\ell) -  \tilde{G}_p(\ell)\Big\rvert &\leq \Big\lvert  \G(\ell) - \sum_{n=0}^p  G_n(\ell)\Big\rvert + \Big\lvert  \G(\ell) - \sum_{n=0}^p  \tilde{G}_n(\ell)\Big\rvert \\
 &\leq (C_{0,p} + \tilde{C}_{0,p}) \abs{\ell}^{-2p-d}
\end{align*}
for all $\ell \in \Lambda$. Combined with the homogeneity, we thus have
\[\Big\lvert  G_p\Big(\frac{\ell}{\lvert \ell \rvert}\Big) -  \tilde{G}_p\Big(\frac{\ell}{\lvert \ell \rvert}\Big)\Big\rvert \leq (C_{0,p} + \tilde{C}_{0,p}) \abs{\ell}^{-2}.\]
Given any $\eps >0$ we can therefore find an $R>0$ such that
\begin{equation} \label{eq:uniquenessapprox}
\lvert  G_p(x) -  \tilde{G}_p(x)\rvert \leq \eps
\end{equation}
for all $x \in \{ \frac{\ell}{\lvert \ell \rvert} \colon \ell \in \Lambda, \lvert \ell \rvert > R \}$. As this set is dense in $\partial B_1(0)$ and both functions are continuous, \eqref{eq:uniquenessapprox} holds for all $x \in \partial B_1(0)$. As $\eps >0$ was arbitrary we thus have $G_p = \tilde{G}_p$ on $\partial B_1(0)$ and by homogeneity on all of $\R^d \backslash \{0\}$.

In the case that $p=0$ and $d=2$ the argument is only slightly more complex. We write $G_0 = A \log \lvert \ell \rvert + \varphi$ and $\tilde{G}_0 = \tilde{A} \log \lvert \ell \rvert + \tilde{\varphi}$ with $A,\tilde{A} \in \R^{N \times N}$ and $\varphi, \tilde{\varphi}$ both $0$-homogeneous. Then
\[\Big\lvert \tilde{A} \log \lvert \ell \rvert + \tilde{\varphi}(\ell) -   A \log \lvert \ell \rvert - \varphi(\ell)\Big\rvert \leq (C_{0,0} + \tilde{C}_{0,0}) \abs{\ell}^{-2}\]
 for all $\ell \in \Lambda$. As $\varphi$ and $\tilde{\varphi}$ are bounded, we can divide by $\log \lvert \ell \rvert$ and send $\lvert \ell \rvert \to \infty$ to see that $A=\tilde{A}$. The identity $\varphi=\tilde{\varphi}$ then follows the same way as before.
\end{proof}

We can now rewrite the truncated multipole expansion of Theorem \ref{thm:structurewithmoments} in continuum terms.
\begin{lemma} \label{lem:MPCMP}
\begin{align*}
\sum_{i=0}^{p-1} \sum_{k=1}^N b^{(i,k)} : D_{\rm \mathcal{S}}^i \mathcal{G}_k = \sum_{k=1}^N \sum_{n=0}^{\big\lfloor \frac{p-1}{2} \big\rfloor} \sum_{i=0}^{p-1-2n} a^{(i,n,k)} : \nabla^i (G_{n})_{\cdot k} + w^{\rm MP},
\end{align*}
where
\[ \lvert D^j w^{\rm MP} \rvert \leq C_{j} \abs{\ell}^{2-d-j-p}\]
for all $j \in \N_0$.
\end{lemma}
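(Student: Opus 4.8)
The plan is to carry out two successive replacements in the discrete multipole sum and to sort every resulting term by its algebraic decay rate. First I would replace the discrete Green's function $\mathcal{G}_k$ by the continuum kernels $G_n$ using Theorem~\ref{thm:greensfunctionexpansion}, and second I would Taylor expand the discrete differences $D_{\mathcal{S}}^i$ into continuum derivatives $\nabla^{i'}$ of the smooth, essentially homogeneous kernels. The guiding principle is that $\nabla^{i'}(G_n)_{\cdot k}$ is essentially homogeneous of degree $2-2n-d-i'$, so it decays at least as fast as the target remainder rate $\abs{\ell}^{2-d-p}$ exactly when $2n+i'\geq p$. I would retain precisely the terms with $2n+i'<p$, i.e.\ $n\leq\lfloor\frac{p-1}{2}\rfloor$ and $i'\leq p-1-2n$ (which is the index range in the statement), and push everything else --- all kernels with $2n+i'\geq p$ together with all expansion errors --- into $w^{\rm MP}$. (For $p=0$ both sides are empty and there is nothing to prove, so assume $p\geq1$.)

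For the first replacement, fix $i,k$ and apply Theorem~\ref{thm:greensfunctionexpansion}. Since $\mathcal{S}$ and $\Rc$ both span $\La$, a difference $D_{\mathcal{S}}^i$ is a finite combination of shifted $\Rc$-difference stencils, so the estimates there transfer verbatim to $\mathcal{S}$-differences, and with $N_i:=\lceil(p-i)/2\rceil$ one obtains
\[
  \Big| D^j\big(D_{\mathcal{S}}^i\mathcal{G}_k-\sum_{n=0}^{N_i}D_{\mathcal{S}}^i(G_n)_{\cdot k}\big)\Big|\lesssim\abs{\ell}^{-2N_i-d-i-j}\leq\abs{\ell}^{2-d-p-j}
\]
for every $j\geq0$; contracting against $b^{(i,k)}$ deposits this difference into $w^{\rm MP}$ with the required decay, uniformly in the number $j$ of further differences. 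For the second replacement I distinguish two cases for each pair $(i,n)$ with $n\leq N_i$. If $2n+i\geq p$, the whole term $b^{(i,k)}:D_{\mathcal{S}}^i(G_n)_{\cdot k}$ already decays like $\abs{\ell}^{2-2n-d-i}\leq\abs{\ell}^{2-d-p}$ (gaining one order per further difference), so I place it directly into $w^{\rm MP}$. Otherwise I expand each factor of $D_{\boldsymbol{\sigma}}=D_{\sigma_1}\cdots D_{\sigma_i}$, $\boldsymbol{\sigma}\in\mathcal{S}^i$, through the integral form of Taylor's theorem, writing $D_{\boldsymbol{\sigma}}(G_n)_{\cdot k}$ as a finite combination of terms $\nabla^{i'}(G_n)_{\cdot k}$ with $i\leq i'\leq p-1-2n$ contracted with constant tensors built from $\boldsymbol{\sigma}$, plus a remainder given by an integral of $\nabla^{M+1}(G_n)_{\cdot k}$ over short segments, truncated at $M=p-1-2n$.

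Collecting the two replacements over all $i$ and $\boldsymbol{\sigma}$, I would \emph{define} $a^{(i',n,k)}$ to be the accumulated coefficient tensor multiplying $\nabla^{i'}(G_n)_{\cdot k}$ for each retained pair $(i',n)$. Because $\nabla^{i'}(G_n)_{\cdot k}$ is symmetric in its $i'$ derivative slots, only the symmetric part of this coefficient contributes, so $a^{(i',n,k)}\in(\R^d)^{\odot i'}$ as required. It then remains to estimate the three kinds of contributions to $w^{\rm MP}$: the Green's-function errors are controlled by the display above; a fast homogeneous kernel $\nabla^{i'}(G_n)_{\cdot k}$ with $2n+i'\geq p$ satisfies $|D^j\nabla^{i'}(G_n)_{\cdot k}|\lesssim\abs{\ell}^{2-2n-d-i'-j}\leq\abs{\ell}^{2-d-p-j}$, using that differencing a smooth function of homogeneity $s$ gains one order of decay; and the Taylor remainder at order $M=p-1-2n$ is an integral of $\nabla^{M+1}(G_n)_{\cdot k}$, hence of homogeneity $2-2n-d-(M+1)=2-d-p$, whose $j$-th differences decay like $\abs{\ell}^{2-d-p-j}$.

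The step I expect to be the main obstacle is establishing the remainder bound \emph{uniformly in $j$}, namely that each further discrete difference applied to $w^{\rm MP}$ genuinely gains one order. For the Taylor remainders this requires controlling $D^j$ of an integral of $\nabla^{M+1}(G_n)_{\cdot k}$; I would handle it by observing that $D^j$ acts as a further finite difference of $\nabla^{M+1}(G_n)_{\cdot k}$, which gains $j$ orders precisely because the derivatives of the kernels are cleanly (positively) homogeneous. The only case requiring separate comment is $n=0,\,d=2$, where $G_0=A\log\abs{\ell}+\varphi$ is merely essentially homogeneous; but since $p\geq1$ forces $M+1=p\geq1$ there, the relevant derivative $\nabla^{M+1}G_0$ has the logarithm differentiated away and is genuinely homogeneous, so no logarithmic factor enters $w^{\rm MP}$, consistent with the log-free bound in the statement. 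The undifferentiated logarithm of $G_0$ survives only in the retained leading term $a^{(0,0,k)}:(G_0)_{\cdot k}$, exactly as expected.
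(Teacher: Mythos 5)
Your proposal is correct and takes essentially the same route as the paper's proof, which disposes of the lemma in one line by combining the kernel expansion of Theorem~\ref{thm:greensfunctionexpansion} with a Taylor expansion of the discrete differences acting on the $G_n$. Your write-up merely makes explicit the bookkeeping the paper leaves implicit: the index ranges $2n+i'<p$, the symmetrization giving $a^{(i',n,k)}\in(\R^d)^{\odot i'}$, the order-gaining property of differences of (essentially) homogeneous kernels that yields uniformity in $j$, and the treatment of the logarithm in the $n=0$, $d=2$ case.
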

\begin{proof}
This is an immediate consequence of the expansion of the lattice Green's function from Theorem \ref{thm:greensfunctionexpansion} and a Taylor expansion of the discrete differences in $D_{\boldsymbol{\rho}}G_{n}$.
\end{proof}

\noindent As a consequence we obtain Theorem~\ref{thm:structurewithmomentscont}.

\begin{proof}[Proof of Theorem~\ref{thm:structurewithmomentscont}]
  The result follows by combining Theorem \ref{thm:structurewithmoments} and Lemma \ref{lem:MPCMP}.
\end{proof}

\section{Proofs - Far Field Expansions for Crystalline Defects} \label{sec:prooffarfieldexpansion}

In this section we prove the results of \S~\ref{sec:applications}. Before we get to the main results, let us start with a preliminary proof.
\begin{proof}[Proof of Proposition~\ref{prop:uCLE}]
$\delta \mathcal{E}(u)(\ell)$ is summable according to \cite[Lemma 10]{EOS2016}. The same then holds true for $H[u](\ell)$. The idea of the proof is to use a cutoff $\eta_R$ and sum by parts so that only the far away annulus $B_{2R} \setminus B_R$ contributes. Here, the linearisation and discretisation errors are small and we can go from the nonlinear forces to the linearised forces and all the way to the continuum forces. So, consider a smooth cutoff function $\eta_R$, such that $\eta_R \colon \R^d \to [0,1]$ satisfies $\eta_R(z)=1$ for $\lvert z \rvert \leq R$ and $\eta_R(z)=0$ for $\lvert z \rvert \geq 2R$, as well as $\lvert \nabla \eta_R \rvert \lesssim R^{-1}$. We then have
\begin{align*}
\sum_\ell \delta \mathcal{E}(u)(\ell)_i &= \lim_{R \to \infty} \sum_\ell \eta_R(\ell) \delta \mathcal{E}(u)(\ell)_i \\
 &= \lim_{R \to \infty} \sum_\ell \nabla V(Du)[e_i \otimes D\eta_R]\\
 &= \lim_{R \to \infty} \sum_\ell \big(\nabla V(Du)-\nabla V(0)\big)[e_i \otimes D\eta_R]\\
 &= \lim_{R \to \infty} \sum_\ell \big(\nabla^2 V(0)\big)[e_i \otimes D\eta_R, Du]\\
 &= \sum_\ell H[u](\ell)_i,
\end{align*}
as the linearisation error sums to $O(R^{-1})$. Furthermore,
\begin{align*}
\lim_{R \to \infty} &\sum_\ell \big(\nabla^2 V(0)\big)[e_i \otimes D\eta_R, Du]\\
 &= \lim_{R \to \infty} \int \mathbb{C}[e_i \otimes \nabla \eta_R, \nabla u]\,d\ell\\
 &= \lim_{R \to \infty} \int_{\R^2 \setminus B_1(\hat{x})} -\divo \big(\mathbb{C}[\nabla u] \big)_i \eta_R \,d\ell -\int_{\partial B_1(\hat{x})}\mathbb{C}[\nabla u] \nu \,d\sigma \\
  &= -\int_{\partial B_1(\hat{x})}\mathbb{C}[\nabla u] \nu \,d\sigma,
\end{align*}
as the discretisation error is also $O(R^{-1})$.
\end{proof}

Now let us come to the main topic of this section, the proofs of Theorem~\ref{thm:pointdef} and Theorem~\ref{thm:screw}.

At the outset, we recall from the setting discussed in \S\ref{sec:genresults} and \S\ref{sec:applications} that we assume the site potentials (and hence the potential energy functional) are of class $C^K$, the number of derivatives we want to estimate is $J \geq 2$ and the dimension of the problem is $d$. We now prove the expansion and the error estimates by induction over $p \geq 0$ as long as
\begin{equation}\label{eq:KJinequality}
\max \Big\{0,\Big\lfloor \frac{p-1}{d} \Big\rfloor\Big\}\leq K-J-2 
\end{equation}
in the point defect case and
\begin{equation*}
p \leq K-J-2
\end{equation*}
in the case of the screw dislocation, where we always have $d=2$. We also note that all results only need to be proven for $\lvert \ell \rvert \geq R$ for some sufficiently large $R$.

\subsection{The case \texorpdfstring{$p=0$}{p=0}} The case $p=0$ is a consequence of results in \cite{EOS2016} for both the point defect case and the case of screw dislocation. 
To be precise, it follows from the assumptions of both Theorem~\ref{thm:pointdef} and Theorem~\ref{thm:screw} that $J \leq K-2$. In this setting, we may apply Theorem~1 in \cite{EOS2016}, which states that the solution $\bar{u}$ in the point defect cases satisfies
\begin{equation} \label{eq:PDerrorlowestorder}
\lvert D^j \bar{u} (\ell) \rvert \lesssim \abs{\ell}^{1-d-j},\quad 1\leq j \leq J.
\end{equation}
In the screw dislocation case, we may similarly apply Theorem~5 in \cite{EOS2016}, implying that
\begin{equation}\label{eq:Dislerrorlowestorder}
  \lvert D^j (\bar{u}-u_{\rm CLE}) (\ell) \rvert \lesssim \abs{\ell}^{-1-j} \log \abs{\ell},\quad 1\leq j \leq J.
\end{equation}
In order to combine and summarise these results, we define
\[
  u_0 := \begin{cases}
    0 & \text{for the point defect,}\\
    u_{\rm CLE} & \text{for the dislocation,}
    \end{cases}
\]
so that \eqref{eq:PDerrorlowestorder} and \eqref{eq:Dislerrorlowestorder} imply
\begin{equation}\label{eq:roneestimate}
\lvert D^j (\bar{u}-u_0) (\ell) \rvert \lesssim \abs{\ell}^{1-d-j}\log \abs{\ell},\quad 1\leq j \leq J,
\end{equation}
while $\nabla u_0 \in C^\infty(\R^d\backslash\{0\};\R^N)$ with
\begin{equation}\label{eq:uzeroestimate}
\lvert \nabla^j u_0 (\ell) \rvert \lesssim \abs{\ell}^{-j},\quad \text{for all}\ j\geq 1.
\end{equation}

\subsection{The case \texorpdfstring{$p>0$}{p>0}}
We now proceed inductively from the case $p=0$.

\paragraph{Taylor expansion.} We follow the construction that we formally motivated in \S~\ref{sec:contdev} and expand the solution $\bar{u}$ by performing a Taylor expansion of the equilibrium equation for the forces
\[\delta \mathcal{E}(\bar{u})(\ell)=-\Div g(\ell)\]
for all $\ell$ sufficiently large, where $g=0$ in the screw dislocations case and $g$ has compact support for point defects.

Recall from \S\ref{sec:applications} that $\mathcal{E}$ inherits the same regularity as the homogeneous site potential $V$, and is therefore $C^K$. For such a variation of the energy we use the pointwise notation for the corresponding $\ell^2$ representative. Specifically, that means
\[\delta^{k+1} \mathcal{E}(u)[v_1, \dots, v_k](\ell) = - \Div \Big( \nabla^{k+1} V (Du(\ell))[Dv_1(\ell), \dots, Dv_k(\ell)]\Big).\]

 A Taylor expansion around the homogeneous lattice $u=0$ up to order $\tilde{K}\leq K-2$ gives
\begin{equation} \label{eq:expansionstep1}
\begin{aligned}
0 &=\Div g +\delta \mathcal{E}(\bar{u})(\ell) \\
&= \Div g+\sum_{k=0}^{\tilde{K}} \frac{1}{k!} \delta^{k+1} \mathcal{E}(0)[\bar{u}]^k(\ell) + \int_0^1 \frac{(1-t)^{\tilde{K}}}{\tilde{K}!} \delta^{\tilde{K}+2} \mathcal{E}(t\bar{u})[\bar{u}]^{(\tilde{K}+1)}(\ell)\,dt.
\end{aligned}
\end{equation}
We will determine the precise order $\tilde{K}$ for this expansion as part of our arguments later.
We note that we already have explicit expressions for the first two terms in the sum: A homogeneous (Bravais) lattice is always in equilibrium, $\delta \mathcal{E}(0)=0$, and the second term is  $\delta^2 \mathcal{E}(0)[\bar{u}] = H[\bar{u}]$. For convenience, we will define $\mathcal{T}_{p+1}^{(1)}$ to be the integral error term and also include the compactly supported $\Div g$, i.e.
\[\mathcal{T}_{p+1}^{(1)} := -\Div g - \int_0^1 \frac{(1-t)^{\tilde{K}}}{\tilde{K}!} \delta^{\tilde{K}+2} \mathcal{E}(t\bar{u})[\bar{u}]^{(\tilde{K}+1)}\,dt,\]
so that we may rewrite \eqref{eq:expansionstep1} as
\begin{equation}\label{eq:expansionstep1a}
  H[\bar{u}] = -\sum_{k=2}^{\tilde{K}} \frac{1}{k!} \delta^{k+1} \mathcal{E}(0)[\bar{u}]^k +\mathcal{T}_{p+1}^{(1)}.
\end{equation}

\paragraph{Expansion of solution.} Next, we make the inductive assumption that we have already obtained $u_i$ for $0 \leq i \leq p-1$, and that we can further decompose
\begin{equation}\label{eq:uiSplitting}
  u_i = u_i^{\rm C} + u_i^{\rm MP},
\end{equation}
which are respectively continuum- and lattice-valued functions, so that we may write
\begin{equation}\label{eq:sumExpansion}
  \bar{u} = \sum_{i=0}^{p-1} u_i + r_p = \sum_{i=0}^{p-1} \Big(u^{\rm C}_i+ u^{\rm MP}_i\Big)+ r_p,
\end{equation}
where $r_p$ is some remainder. These functions will be assumed inductively to satisfy certain properties which we now make precise:
\begin{itemize}
\item We will assume that the former terms, $u_i^{\rm C}\in C^{\infty}(\R^d\backslash B_{R_0}(0))$, $i \leq p-1$, have been constructed to solve a sequence of linear elliptic PDEs,
  \begin{equation}\label{eq:uCiPDE}
    H^{\rm C}[u_i^{\rm C}]= \mathcal{S}_i(u_0^{\rm C}, u_0^{\rm CMP}, \dots, u_{i-1}^{\rm C}, u_{i-1}^{\rm CMP})
  \end{equation}
  and satisfy decay estimates
  \begin{equation}\label{eq:uCestimate}
    \lvert \nabla^j u_i^{\rm C} (\ell)\rvert \leq C_j \lvert \ell \rvert^{2-d-j-i} \log^{i} \lvert \ell \rvert \quad\text{for all }j\in\N_0.
  \end{equation}
  We note that $\mathcal{S}_i$ are forcing terms which we determine as part of our argument.
\item We assume that the latter terms in \eqref{eq:uiSplitting} have been constructed to take the form
\begin{equation}\label{eq:uiMP_Form}
  u_i^{\rm MP} = \sum_{k=1}^N b^{(i,k)} : D_{\rm \mathcal{S}}^i \mathcal{G}_k
\end{equation}
for some constant tensors $b^{(i,k)}$. In particular, $H[u_i^{\rm MP}](\ell) =0$ for $|\ell|$ large enough, and we assume $u_0^{\rm MP}=0$.
\item We assume that the remainder estimate 
\begin{equation} \label{eq:errorprevorder}
  \lvert D^jr_{p}(\ell) \rvert \lesssim \abs{\ell}^{2-d-p-j}  \log^{p} \abs{\ell}
\end{equation}
holds for $j=1,\dots,J$.

\end{itemize}
In light of \eqref{eq:uiMP_Form}, whenever useful, we may also rewrite the multipole contributions in terms of smooth continuum kernels by applying Lemma \ref{lem:MPCMP}. Indeed, by combining the terms of equal homogeneity, Lemma~\ref{lem:MPCMP} implies that we may write
\[\sum_{i=1}^{p-1} u_i^{\rm MP} =  \sum_{i=1}^{p-1} u_i^{\rm CMP} + w^{\rm MP}_p, \]
where $u_i^{\rm CMP} \in C^\infty(\R^d\backslash \{0\};\R^N)$ is positively homogeneous of degree $(2-d-i)$ which implies
\begin{equation}\label{eq:uCMPestimate}
  \lvert \nabla^j u_i^{\rm CMP} \rvert \leq C_{j} \lvert\ell\rvert^{2-d-j-i}
\end{equation}
for all $j \in \N_0$, and we have the additional error estimate
\begin{equation} \label{eq:wpestimate}
\lvert D^j w^{\rm MP}_p \rvert \leq C_{j} \abs{\ell}^{2-d-j-p}
\end{equation} 
for all $j \in \N_0$. Using the functions $u^{\rm CMP}$, we therefore have the representation
\begin{equation}\label{eq:sumExpansion2}
  \bar{u} = \sum_{i=0}^{p-1} \Big(u^{\rm C}_i+u^{\rm CMP}_i\Big)+w^{\rm MP}_p+r_p.
\end{equation}

Using the inductive assumptions made above, we may substitute \eqref{eq:sumExpansion} into the left-hand side of \eqref{eq:expansionstep1a}, and write
\begin{equation} \label{eq:expansionstep2}
\sum_{i=0}^{p-1} H[u_i^{\rm C}] + H[r_p] = -\sum_{k=2}^{\tilde{K}} \frac{1}{k!} \delta^{k+1} \mathcal{E}(0)[\bar{u}]^k +\mathcal{T}_{p+1}^{(1)}.
\end{equation}
The goal is now to split the remainder $r_p$ into the next continuum contribution $u_p^{\rm C}$ and a new intermediary term $s_p$, and find the correct equation for $u_p^{\rm C}$ so the residual becomes even smaller in the far-field. Then we can apply Theorem \ref{thm:structurewithmoments} to $s_p$ to split this term yet further into multipole terms up to order $p$ and a new remainder $r_{p+1}$ with the desired improved decay.

\paragraph{Taylor expansion error.}
With the assumptions above, we turn back to the Taylor expansion \eqref{eq:expansionstep1}, and establish the choice of order, $\tilde{K}$. Our aim is to fully resolve all orders of decay up to and including $\abs{\ell}^{-d-p}$, and we have the decay estimates $\lvert D^j \bar{u} \rvert \lesssim \abs{\ell}^{2-d-j}$ and the stronger $\lvert D^j \bar{u} \rvert \lesssim \abs{\ell}^{1-d-j}$ in the case of a point defect, as $u_0=0$. Formally, this means that we expect that $\lvert \delta^{k+1} \mathcal{E}(0)[\bar{u}^{\otimes k}] (\ell) \rvert \lesssim \abs{\ell}^{(1-d)k-1}$ or $\abs{\ell}^{-dk-1}$ in the case of a point defect. These orders of decay come from estimating a $k$-fold tensor product of $D\bar{u}$ with the decay mentioned, and one further additional power arising from the discrete divergence. In particular, to ensure that the order of decay matches or exceeds that of $\lvert \ell\rvert_0^{-d-p}$, we require $\tilde{K} \geq \frac{p}{d-1}+1$, or in the case of a point defect $\tilde{K} \geq \frac{p-1}{d}+1$. Indeed, this is satisfied by assumption on $K$, if we set $\tilde{K} := K-J-1$. We also note that the expression of $\mathcal{T}_{p+1}^{(1)}$ involves $\tilde{K}+2=K-J+1$ derivatives of $\mathcal{E}$ and thus $V$, as well as one divergence, so $J-2$ further derivatives are available. Let us use these considerations and the choice $\tilde{K} := K-J-1$ to rigorously estimate $\mathcal{T}_{p+1}^{(1)}$.

\begin{lemma} \label{lem:firsterror}
With $\tilde{K}=K-J-1$, the remainder term in the Taylor expansion \eqref{eq:expansionstep1} satisfies
\[\big\lvert D^j \mathcal{T}_{p+1}^{(1)} (\ell)\big\rvert \lesssim \abs{\ell}^{-d-p-1-j} \]
for all $j=0, \dots, J-2$.
\end{lemma}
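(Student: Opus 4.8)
The plan is to split $\mathcal{T}_{p+1}^{(1)}$ into its two pieces and treat them separately. The term $-\Div g$ is the easy one: since $g$ has compact support for point defects and vanishes identically for screw dislocations, $D^j\Div g(\ell)=0$ once $\abs{\ell}$ exceeds the support radius of $g$ plus a fixed multiple of the interaction range. As the lemma is only claimed for $\abs{\ell}\geq R$ with $R$ large, this term may be discarded. It therefore suffices to estimate $D^j$ of the integral remainder, i.e.\ to bound, uniformly in $t\in[0,1]$, the quantity $D^j\Div F_t$ where
\[
  F_t(\ell) := \nabla^{\tilde{K}+2}V\big(tD\bar{u}(\ell)\big)\big[D\bar{u}(\ell)\big]^{\tilde{K}+1},
\]
the $t$-prefactor integrating harmlessly to $\frac{1}{(\tilde{K}+1)!}$. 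Since $\Div$ is itself one discrete difference, $D^j\Div F_t$ amounts to applying $j+1$ discrete differences to $F_t$.

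Next I would expand this via the discrete Leibniz rule, distributing the $j+1$ differences among the $\tilde{K}+1$ factors $D\bar{u}$ and the coefficient $\nabla^{\tilde{K}+2}V(tD\bar{u})$. Differences landing on a factor $D\bar{u}$ simply raise its order, and for these I use the decay estimate $\lvert D^m\bar{u}(\ell)\rvert\lesssim\abs{\ell}^{2-d-m}$, valid for $1\leq m\leq J$, with the improved point-defect rate $\abs{\ell}^{1-d-m}$; these follow from the $p=0$ step \eqref{eq:PDerrorlowestorder}--\eqref{eq:uzeroestimate}. Differences landing on the coefficient I handle via the mean-value identity
\[
  D_\rho\big[G(tD\bar{u})\big](\ell) = \int_0^1 \nabla G\big(tD\bar{u}(\ell)+s\,tD_\rho D\bar{u}(\ell)\big)\,ds\,\big[t\,D_\rho D\bar{u}(\ell)\big],
\]
which shows that each such difference pulls out a bounded derivative of $V$ (all arguments stay in a fixed small neighbourhood of $0$ where the relevant derivatives are bounded, because $\lvert D\bar{u}\rvert\lesssim\abs{\ell}^{1-d}\to0$) times a discrete difference of $\bar{u}$ of order at least two.

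The power count is the crux. Every one of the $j+1$ differences improves the overall decay exponent by at least one: a difference on a $D\bar{u}$ factor by exactly one, while a difference on the coefficient contributes a factor $D^{\geq2}\bar{u}$ decaying like $\abs{\ell}^{\leq-d}$, hence by at least $d\geq2$. Consequently the slowest-decaying contribution is the one in which all $j+1$ differences fall on the product $[D\bar{u}]^{\tilde{K}+1}$, giving decay $\abs{\ell}^{(1-d)(\tilde{K}+1)-(j+1)}$ in the screw case and $\abs{\ell}^{-d(\tilde{K}+1)-(j+1)}$ for point defects. With $\tilde{K}=K-J-1$ and the standing hypothesis on $K$ (namely $p\leq K-J-2$ for the screw, resp.\ $\max\{0,\lfloor\frac{p-1}{d}\rfloor\}\leq K-J-2$ for the point defect), a direct computation shows both exponents are bounded above by $-d-p-1-j$: for the screw this reduces to $\tilde{K}\geq p+1$, and for the point defect to $d\tilde{K}\geq p$, both of which are guaranteed by the hypothesis.

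The hard part will be the careful bookkeeping of this discrete Leibniz/chain-rule expansion and, in particular, confirming that differences landing on the coefficient never cost us decay --- this is precisely where $d\geq2$ is used. The same expansion must be accompanied by the regularity check: applying $j+1\leq J-1$ differences to $\nabla^{\tilde{K}+2}V$ requires, in the worst case, $V\in C^{\tilde{K}+2+(j+1)}=C^{K-J+2+j}$, which is contained in $C^{K}$ exactly when $j\leq J-2$, matching the stated range of $j$ and explaining why no logarithm survives in the bound (the rates for $\bar{u}$ used above are log-free).
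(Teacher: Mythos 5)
Your proposal is correct and follows essentially the same route as the paper's proof: drop $-\Div g$ for $\abs{\ell}$ large, reduce to bounding $j+1$ discrete differences of $\nabla^{\tilde{K}+2}V(tD\bar{u})[D\bar{u}]^{\tilde{K}+1}$ uniformly in $t$, distribute the differences by the discrete Leibniz rule using the log-free $p=0$ rates $\lvert D^m\bar{u}\rvert\lesssim\abs{\ell}^{-m}$ (screw) and $\abs{\ell}^{1-d-m}$ (point defect), and verify the resulting exponents $-\tilde{K}-j-2$ and $-d(\tilde{K}+1)-j-1$ are $\leq -d-p-1-j$ via $\tilde{K}\geq p+1$, respectively $d\tilde{K}\geq p$. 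Your added details (the mean-value identity for differences hitting the coefficient and the explicit count showing $V\in C^{K}$ suffices for $j\leq J-2$) are consistent with, and slightly more explicit than, what the paper leaves implicit.
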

\begin{proof}
For $\ell$ large enough, $-\Div g(\ell) =0$, hence we have
\begin{align*}
\big\lvert D^j \mathcal{T}_{p+1}^{(1)} \big\rvert &= \bigg\lvert D^j \int_0^1 \frac{(1-t)^{\tilde{K}}}{\tilde{K}!} \delta^{\tilde{K}+2} \mathcal{E}(t\bar{u})[\bar{u}]^{\tilde{K}+1}\,dt \bigg\rvert \\
&\lesssim \max_{t \in [0,1]} \Big\lvert D^{j+1} \Big( \nabla^{\tilde{K}+2}V(tD\bar{u})\big[D\bar{u}\big]^{\tilde{K}+1} \Big)  \Big\rvert.
\end{align*}
In the case of a screw dislocation we can use the estimates \eqref{eq:roneestimate} and \eqref{eq:uzeroestimate} for $u_0$ and $r_1$ to see that $\lvert D^j \bar{u} \rvert \lesssim \abs{\ell}^{-j}$ for $1 \leq j \leq J$.  By distributing the outer discrete derivatives over the tensor product and using this estimate, we deduce that
\[\lvert D^j \mathcal{T}_{p+1}^{(1)}(\ell) \rvert \lesssim \abs{\ell}^{-\tilde{K}-j-2}.\]
This proves the result in this case as $\tilde{K}=K-J-1 \geq p+1$ and $d=2$.

For a point defect, we have $u_0=0$. More specifically, according to \eqref{eq:PDerrorlowestorder} we have $\lvert D^j \bar{u} \rvert \lesssim \abs{\ell}^{1-d-j}$ for $1 \leq j \leq J$. Using this estimate and distributing the outer discrete derivative as in the previous case, it follows that
\[\lvert D^j \mathcal{T}_{p+1}^{(1)}(\ell) \rvert \lesssim \abs{\ell}^{-d(\tilde{K}+1)-j-1}.\]
And since $\tilde{K} = K-J-1 \geq \lfloor \frac{p-1}{d}\rfloor+1$ we can estimate the exponent by
\begin{align*}
d(\tilde{K}+1)+j+1 &\geq d \Big(\Big\lfloor \frac{p-1}{d}\Big\rfloor +2\Big) +j+1 \\
&\geq d \Big(\frac{p}{d}-1\Big) +2d +j+1 \\
&= p +d + j +1.\qedhere
\end{align*}
\end{proof}

\paragraph{Error terms due to discrete far-field residuals.}
With $\mathcal{T}_{p+1}^{(1)}$ controlled, we now go back to work on \eqref{eq:expansionstep2}. As the next step we want to replace $\bar{u}$ on the right hand side by a smooth continuum approximation which naturally gives us another error term from the remaining discrete far-field terms. Recalling \eqref{eq:sumExpansion2},
we have
\begin{equation*}
  \bar{u} = \sum_{i=0}^{p-1}\Big(u^{\rm C}_i+u^{\rm CMP}_i\Big) + w^{\rm MP}_p+r_p.
\end{equation*}
We choose to decompose $r_p = u^{\rm C}_p+s_p$, where we will specify $u^{\rm C}_p$ as the solution to a PDE later in the construction. For now, take any function $u^{\rm C}_p \in C^\infty(\R^d \backslash B_{R}(0);\R^N)$ that satisfies
\begin{equation} \label{eq:upcdecay}
  \lvert \nabla^j u_p^{\rm C} \rvert \leq C_j \abs{\ell}^{2-d-p-j} \log^{p} \abs{\ell}
\end{equation}
for all $j \in \N$, some radius $R>0$, and some constants $C_j>0$.

Then define $\bar{u}_p := \sum_{i=0}^p u^{\rm C}_i +\sum_{i=0}^{p-1}u^{\rm CMP}_i$, so that
\begin{equation}\label{eq:sumExpansion3}
\bar{u}= \bar{u}_p  + s_p + w_p^{\rm MP}.
\end{equation}
Based on that, we can substitute \eqref{eq:sumExpansion3} into the series on the right-hand side of \eqref{eq:expansionstep2}, giving
\begin{equation} \label{eq:expansionstep3}
\sum_{i=0}^p H[u_i^{\rm C}] + H[s_p] = -\sum_{k=2}^{\tilde{K}} \frac{1}{k!} \delta^{k+1} \mathcal{E}(0)[\bar{u}_p]^k +\mathcal{T}_{p+1}^{(1)}+ \mathcal{T}_{p+1}^{(2)},
\end{equation}
where
\begin{align*}
  \mathcal{T}_{p+1}^{(2)}&= \mathcal{T}_{p+1}^{(2)}(u_0^{\rm C}, u_0^{\rm CMP}, \dots, u_{p-1}^{\rm C}, u_{p-1}^{\rm CMP}, u_{p}^{\rm C}, s_p, w_p^{\rm MP})\\
  &:= -\sum_{k=2}^{\tilde{K}} \frac{1}{k!} \Big(\delta^{k+1} \mathcal{E}(0)[\bar{u}]^k-\delta^{k+1} \mathcal{E}(0)[\bar{u}_p]^k\Big).
\end{align*}
collects all the terms that contain contribution from $s_p$ or $w_p^{\rm MP}$. To estimate derivatives of $w_p^{\rm MP}$, we can use \eqref{eq:wpestimate}. For $s_p$ we can combine \eqref{eq:errorprevorder} and \eqref{eq:upcdecay} to obtain
\begin{equation} \label{eq:errorprevordervonly}
  \lvert D^j s_p \rvert \lesssim \abs{\ell}^{2-d-p-j}  \log^{p} \abs{\ell}.
\end{equation}
for $j=1,\dots,J$.

This allows us to estimate the second error term.
\begin{lemma} \label{lem:seconderror}
The following estimate holds:
\begin{align*}
\lvert D^j \mathcal{T}_{p+1}^{(2)} \rvert &\lesssim \abs{\ell}^{1-2d-p-j} \log^{p} \abs{\ell}\\
&\lesssim \abs{\ell}^{-d-1-p-j} \log^{p} \abs{\ell}.
\end{align*}
for all $j=0, \dots, J-2$.
\end{lemma}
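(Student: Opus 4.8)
The plan is to exploit the fact that $\mathcal{T}_{p+1}^{(2)}$ is assembled from differences of $k$-th powers of symmetric multilinear forms, each of which necessarily carries at least one factor of the small quantity $e := \bar{u} - \bar{u}_p = s_p + w_p^{\rm MP}$. First I would record the combined decay of $e$: combining \eqref{eq:errorprevordervonly} and \eqref{eq:wpestimate} yields $\lvert D^m e \rvert \lesssim \abs{\ell}^{2-d-p-m}\log^{p}\abs{\ell}$ for $1 \le m \le J$, where the logarithm enters only through the $s_p$ contribution, the $w_p^{\rm MP}$ part being log-free but of identical algebraic rate. I would likewise note that both $\bar{u}$ and $\bar{u}_p$ obey $\lvert D^m\,\cdot\,\rvert \lesssim \abs{\ell}^{2-d-m}$ (via \eqref{eq:roneestimate}, \eqref{eq:uzeroestimate}, \eqref{eq:uCestimate} and \eqref{eq:uCMPestimate}), with the sharper rate $\abs{\ell}^{1-d-m}$ available in the point-defect case.

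Next I would apply the telescoping identity for symmetric $k$-linear forms,
\[
\delta^{k+1}\mathcal{E}(0)[\bar{u}]^k - \delta^{k+1}\mathcal{E}(0)[\bar{u}_p]^k = k\int_0^1 \delta^{k+1}\mathcal{E}(0)\big[\bar{u}_p + t e\big]^{k-1}[e]\,dt,
\]
so that every term produced is a multilinear expression with exactly one slot filled by $e$ and the remaining $k-1$ slots filled by the convex combination $\bar{u}_p + te$, which satisfies the $\abs{\ell}^{2-d-m}$ bound uniformly in $t$. Writing this pointwise as $-\Div\big(\nabla^{k+1}V(0)[\,\cdot\,]\big)$ and using that the coefficient tensor $\nabla^{k+1}V(0)$ is \emph{constant}, the outer $D^j$ together with the single difference supplied by $\Div$ amount to $j+1$ discrete differences which, by the discrete Leibniz rule, distribute solely over the $k$ displacement arguments (and not over $V$).

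It then remains to count orders of decay. Each of the $k$ arguments already carries one difference $D$; after distributing the extra $j+1$ differences, the distinguished factor is a difference of $e$ of order at most $j+2$ and the others are differences of $\bar{u}_p + te$ of order at most $j+2$. Since $j \le J-2$ guarantees all these orders lie in the range $[1,J]$ where the decay estimates hold, independently of how the $j+1$ differences are allocated one obtains the bound $\abs{\ell}^{(1-d)k-p-(j+1)}\log^{p}\abs{\ell}$ for the $k$-th term (any logarithms coming from subleading combination factors $u_i^{\rm C}$ sit at strictly faster algebraic rates and are absorbed). The dominant contribution is $k=2$, giving exactly $\abs{\ell}^{1-2d-p-j}\log^{p}\abs{\ell}$, while $d \ge 2$ forces every $k \ge 3$ term to decay strictly faster; in the point-defect case the sharper factor decay leaves an additional order to spare. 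Summing over $2 \le k \le \tilde{K}$ then yields the claim, and $\abs{\ell}^{1-2d-p-j} \le \abs{\ell}^{-d-1-p-j}$ since $d \ge 2$.

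The main obstacle I anticipate is bookkeeping rather than conceptual: one must verify that the derivative budget is respected exactly, so that the $j+1$ distributed differences never push any single factor beyond the order $J$ at which the underlying estimates are available. This is precisely what pins down the admissible range $j = 0,\dots,J-2$, and it must be checked that the logarithmic power is correctly attributed to the $s_p$ part of $e$.
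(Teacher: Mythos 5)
Your proof is correct and takes essentially the same route as the paper: both arguments extract a single factor of $e = s_p + w_p^{\rm MP}$ from each difference $\delta^{k+1}\mathcal{E}(0)[\bar{u}]^k-\delta^{k+1}\mathcal{E}(0)[\bar{u}_p]^k$, distribute the $j+1$ discrete differences (from $D^j$ and the divergence) over the $k$ arguments via the discrete Leibniz rule, and count decay, with the $k=2$ term dominating at $\abs{\ell}^{1-2d-p-j}\log^{p}\abs{\ell}$ and each extra factor contributing at least $\abs{\ell}^{1-d}\lesssim\abs{\ell}^{-1}$. The only cosmetic difference is that you use the integral telescoping identity to treat all $k$ uniformly, whereas the paper factorizes the symmetric form explicitly for $k=2$ and remarks that $k\geq 3$ is analogous; your derivative-budget check ($j+2\leq J$, so $j\leq J-2$) is exactly the constraint the paper imposes.
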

\begin{proof}
  We note that $\mathcal{T}_{p+1}^{(2)}$ can be written as a sum of differences,
  \[
    \mathcal{T}_{p+1}^{(2)}  = -\sum_{k=2}^{\tilde{K}} \frac{1}{k!} \Big(\delta^{k+1} \mathcal{E}(0)[\bar{u}]^k-\delta^{k+1} \mathcal{E}(0)[\bar{u}_p]^k\Big).
  \]
For the $k=2$ term, we use \eqref{eq:sumExpansion3} and apply the discrete product rule to obtain
\begin{align*}
\Big \lvert D^j \delta^{3} \mathcal{E}(0)&[\bar{u}_p + s_p + w_p^{\rm MP}]^2 - D^j\delta^{3} \mathcal{E}(0)[\bar{u}_p]^2 \Big \rvert \\
&\lesssim  \sum_{i=0}^{j+1} \lvert D^{i+1} (s_p + w_p^{\rm MP}) \rvert \sup_{\rho \in B_R \cap \La}\lvert D^{j+2-i} (-2\bar{u}_p + s_p + w_p^{\rm MP})(\cdot+\rho) \rvert \\
&\lesssim  \sum_{i=0}^{j+1} \lvert D^{i+1} (s_p + w_p^{\rm MP}) \rvert  \, \abs{\ell}^{-d-j+i}.
\end{align*}
We note that the shifts $\rho$ appearing in the first estimate which arise from the discrete product rule remain below some finite radius $R=R(J)$. Using the estimates for $w_p^{\rm MP}$ and $s_p$ gives the desired overall estimate.

For $k \geq 3$ a similar argument provides the same estimate, as any additional factor just improves the estimate by at least $\abs{\ell}^{-1} \log \abs{\ell} \lesssim 1$.
\end{proof}

\paragraph{Continuum approximation.}
We want to construct a sequence of PDEs to resolve \eqref{eq:expansionstep3} to sufficient accuracy and thus describe the far-field behaviour of $\bar{u}$. To that end, we note that a finite difference of any smooth function $v$ can be Taylor expanded to give
\[
  D_\rho v(\ell) = \sum_{k=1}^{M-1}\frac1{k!}\nabla^k v(\ell)[\rho]^k+\frac{1}{M!}\nabla^{M}v(\ell+\theta\rho)[\rho]^M,
\]
for some $\theta\in[0,1]$. By iterating this process of Taylor expansion, a similar series approximation can be constructed for higher-order finite differences. In particular, for a second-order finite difference, we obtain
\begin{equation}\label{eq:2ndorderFDTaylor}
\begin{aligned}
  D_{\rho'}D_\rho v(\ell) &= \sum_{k=2}^{M-1}\sum_{l=1}^{k-1}\frac{1}{l!(k-l)!}\nabla^kv(\ell)[\rho']^l [\rho]^{(k-l)}\\
  &\qquad\qquad+\sum_{l=1}^{M-1}\frac{1}{l!(M-l)!}\nabla^{M}v(\ell+\theta_l)[\rho']^l[\rho]^{(M-l)}.
\end{aligned}
\end{equation}
where $\theta_l$ lies in the convex hull of the set of points $\{0,\rho',\rho,\rho'+\rho\}$ for $l=1,\ldots,M-1$. 

To start, we may use this expansion to approximate the action of $H$ on a smooth function. We note that $H$ was defined by \eqref{eq:Hdefinition} as
\[
  H[v](\ell) = \sum_{\rho, \rho' \in \Rc} 
  \nabla^2 V(0)_{\rho \rho'} D_{-\rho'}D_\rho v(\ell).
\]
The symmetry assumption \eqref{eq:pointsymmetry} implies $\nabla^2 V(0)_{\rho \rho'} = \nabla^2 V(0)_{(-\rho) (- \rho')}$. Therefore all the odd orders cancel and using \eqref{eq:2ndorderFDTaylor} we get that there exist tensors $\mathbb{C}^{2n}\in \R^{N^2\times d^{2n}}$ for $n\geq 1$ such that for any smooth function $v$,
\begin{equation}\label{eq:Hcontexpansion}
  H[v](\ell) = \sum_{n=1}^{M_1}\mathbb{C}^{2n}[\nabla^{2n} v(\ell)]+\mathcal{T}^{(3,1)}\big(\nabla^{2M_1+2} v\big),
\end{equation}
where $\mathcal{T}^{(3,1)}$ collects the Taylor error terms.

In coordinates, these tensors are
\[
  \mathbb{C}^{2n}_{abj_1\dots j_{2n}} = \sum_{\rho,\rho'\in\Rc}\sum_{l=1}^{2n-1}\frac{\big(\nabla^2 V(0)_{\rho' \rho}\big)_{ab}\big((-\rho')^{\otimes l}\otimes \rho^{\otimes(n-l)}\big)_{j_1\dots j_k}}{l!(2n-l)!}
\]
for $a,b=1,\ldots,N$, $j_l=1,\ldots,d$.

In particular, in leading order we find $H[v] \approx c_{\rm vol} H^{\rm C}[v]$, as
\begin{align}
\mathbb{C}^{2}[\nabla^{2} v(\ell)] &=-\sum_{\rho,\rho'\in\Rc} \nabla^2 V(0)_{\rho,\rho'} (\nabla^2v(\ell)[\rho,\rho']) \nonumber \\
&= -c_{\rm vol}  \divo \mathbb{C} [\nabla v(\ell)]\nonumber \\
&= c_{\rm vol} H^{\rm C} [v](\ell). \label{eq:Hhighestorder}
\end{align}
Where we used that 
\begin{equation}\label{eq:C2defn}
  \mathbb{C}_{ijkl} = \nabla^2 W(0)_{ijkl} = \frac{1}{c_{\rm vol}} \sum_{\rho,\rho'\in\Rc} (\nabla^2 V(0)_{\rho\rho'})_{ik} \rho_j \rho'_l,
\end{equation}
based on the definition of the Cauchy-Born energy density $W$ in \eqref{eq:defn_W_cb}.

\paragraph{Construction of $u^{\rm C}_p$.}
We recall that so far, we have Taylor expanded the site potential and substituted a truncated series $\bar{u}_p$ on the right-hand side of \eqref{eq:expansionstep1} to obtain \eqref{eq:expansionstep3}, i.e.
\begin{equation*}
\sum_{i=0}^p H[u_i^{\rm C}] + H[s_p] = -\sum_{k=2}^{\tilde{K}} \frac{1}{k!} \delta^{k+1} \mathcal{E}(0)[\bar{u}_p]^k +\mathcal{T}_{p+1}^{(1)}+ \mathcal{T}_{p+1}^{(2)}.
\end{equation*}
We note that the remaining series on the left and right-hand sides of this equation involve only the action of discrete difference operators on smooth continuum functions, and so we may now perform a discrete-to-continuum approximation to handle these terms.

On the left, we insert \eqref{eq:Hhighestorder} from the discussion of $H^{\rm C}$ as the leading order into the full expansion \eqref{eq:Hcontexpansion} to see that there exist tensors $\mathbb{C}^{n}\in \R^{N^2\times d^n}$ for $n\geq3$ such that for a smooth function $v$,
\begin{equation}\label{eq:leadingA2Cexpansion}
  H[v](\ell) = c_{\rm vol} H^{\rm C}[v](\ell) + \sum_{n=2}^{M_1}\mathbb{C}^{2n}[ \nabla^{2n} v(\ell)]+\mathcal{T}^{(3,1)}\big(\nabla^{2M_1+2} v\big).
\end{equation}

We can perform a similar construction for the higher-order terms in the Taylor expansion on the right-hand side of \eqref{eq:expansionstep3}, so that in general, there exist tensors $\mathbb{C}^{\boldsymbol{j}}$ in the natural tensor space which is isomorph to $\R^{N^{k+1}\times d^n}$ for each $\boldsymbol{j} = (j_1, ..., j_k)$ that satisfies $1 \leq j_m \leq M_k-1$ and $\sum_{m=1}^k j_m = n$ for some $n \geq k+1$. This allows us to approximate $\delta^{k+1} \mathcal{E}(0)[v^{\otimes k}](\ell)$ as
\begin{equation}\label{eq:higherA2Cexpansion}
  \delta^{k+1} \mathcal{E}(0)[v]^k(\ell) = \sum_{\substack{\boldsymbol{j} \colon 1\leq j_m \leq M_k \\\sum_m j_m \geq k+1}} \mathbb{C}^{\boldsymbol{j}}[\nabla^{j_1} v(\ell), \dots, \nabla^{j_k} v(\ell)] +\mathcal{T}^{(3,k)}(v).
\end{equation}
Each of the tensors $\mathbb{C}^{\boldsymbol{j}}$ is a sum of tensor products formed of components of $\nabla^{k+1}V(0)$ and a $n$-fold tensor products of vectors from $\Rc$. The final term
\[\mathcal{T}^{(3,k)}(v)=\mathcal{T}^{(3,k)}\big(\nabla v(\ell),\ldots,\nabla^{M_k}v(\ell), \nabla^{M_k+1}v \big)\]
collects all the terms that include at least one Taylor error for $v$. And overall, after inserting the specific $v$, we can combine these errors to
\[\mathcal{T}^{(3)}_{p+1} := -\sum_{i=0}^p \mathcal{T}^{(3,1)}(u^{\rm C}_i)-\sum_{k=2}^{\tilde{K}} \frac{1}{k!}\mathcal{T}^{(3,k)}(\bar{u}_p).\]

Our aim is now to use the decay estimates of the terms $u^{\rm C}_i$ and $u^{\rm CMP}_i$ to identify and collect all terms in \eqref{eq:higherA2Cexpansion} with the same rate of decay into a forcing term on the right-hand side of \eqref{eq:uCiPDE}, thereby defining the terms $\mathcal{S}_q$ which satisfy the estimate $|\nabla^j\mathcal{S}_q|\lesssim \abs{\ell}^{-d-j-q} \log^{\gamma_q} \abs{\ell}$ for all $j$, some $\gamma_q$, and all $\lvert \ell \rvert$ large enough. Recall from \eqref{eq:uCestimate} and \eqref{eq:uCMPestimate} that for $\lvert \ell \rvert $ large enough
\begin{equation} \label{eq:utildedecay}
  \big|\nabla^j u^{\rm C}_i\big|\lesssim C_j |\ell|^{2-d-j-i}\log^{i}|\ell|\quad\text{and}\quad
  \lvert \nabla^j u_i^{\rm CMP} \rvert \leq C_{j} \lvert\ell\rvert^{2-d-j-i},
\end{equation}
and for notational convenience in the following argument, define
$\tilde{u}_i := u^{\rm C}_i+u^{\rm CMP}_i$ for $i <p$ and $\tilde{u}_p:= u^{\rm C}_p$, which satisfies
\[\big|\nabla^j \tilde{u}_i\big|\leq C_j \abs{\ell}^{2-d-j-i}\log^{i}\abs{\ell}\]
for all $j \in \N_0$ while overall $\sum_{i=0}^p \tilde{u}_i = \bar{u}_p$.

Let us consider multi-indices $\boldsymbol{i}=(i_1, \dots, k)$ for choosing multiple $u_i$ in a nonlinear term and $\boldsymbol{j}=(j_1, \dots, j_k)$ for the number of derivatives on each term. We write $ \lvert \boldsymbol{i} \rvert_1 = \sum_m i_m$ for the sum of the components.

By virtue of estimate \eqref{eq:utildedecay}, and the properties of tensor products, we have
\[
  \Big|\nabla^{j_1}\tilde{u}_{i_1}\otimes \dots \otimes \nabla^{j_k}\tilde{u}_{i_k}\Big|\lesssim
  \abs{\ell}^{(2-d)k-\lvert \boldsymbol{j} \rvert_1 - \lvert \boldsymbol{i} \rvert_1}\log^{\lvert \boldsymbol{i} \rvert_1}\abs{\ell}.
\]

We can therefore identify terms with the decay rate $\abs{\ell}^{-d-q}$ in \eqref{eq:higherA2Cexpansion} by looking only at the indices in
\[\mathcal{I}_{q,k,\boldsymbol{j}} := \{\boldsymbol{i} \in  \{0, \dots, p\}^k \colon \lvert \boldsymbol{i} \rvert_1 = d+ q + (2-d)k-\lvert \boldsymbol{j} \rvert_1 \}.\]
Now that we can precisely identify all these terms we define
\begin{align}
\mathcal{S}_q &:= - \frac{1}{c_{\rm vol}}\sum_{k=2}^{\tilde{K}} \frac{1}{k!} \sum_{\substack{\boldsymbol{j} \colon 1\leq j_i \leq M_k \\ \lvert \boldsymbol{j} \rvert_1 \geq k+1}} \sum_{\boldsymbol{i} \in \mathcal{I}_{q,k, \boldsymbol{j}}} \mathbb{C}^{\boldsymbol{j}}[\nabla^{j_1} \tilde{u}_{i_1}(\ell), \dots, \nabla^{j_k} \tilde{u}_{i_k}(\ell)]\nonumber \\
&\quad - \frac{1}{c_{\rm vol}} \sum_{\substack{4\leq n \leq q+2\\ n\, {\rm even}}}\mathbb{C}^{n}[\nabla^n u^{\rm C}_{2+q-n}(\ell)]. \label{eq:Sqdefn}
\end{align}
We only divide by $c_{\rm vol}$ as we want to factor it out from the resulting equation.

We collect all the remaining orders in
\begin{align*}\mathcal{T}^{(4)}_{p+1} &= \sum_{q \geq p+1} \Big(-\sum_{k=2}^{\tilde{K}} \frac{1}{k!} \sum_{\substack{\boldsymbol{j} \colon 1\leq j_i \leq M_k  \\ \lvert \boldsymbol{j} \rvert_1\geq k+1}} \sum_{\boldsymbol{i} \in \mathcal{I}_{q,k,\boldsymbol{j}}} \mathbb{C}^{\boldsymbol{j}}[\nabla^{j_1} \tilde{u}_{i_1}(\ell), \dots, \nabla^{j_k} \tilde{u}_{i_k}(\ell)]\\
&\quad - \sum_{\substack{3\leq n \leq \min\{q+2, M_1\}\\ n\, {\rm even}}}\mathbb{C}^{n}[\nabla^n u^{\rm C}_{2+q-n}(\ell)]\Big).
\end{align*}
Note that as long as we choose the $M_k$ large enough, the definition of the $S_q$, $q \leq p$, in \eqref{eq:Sqdefn} is independent of their precise choice. Any additional terms from choosing $M_k$ even larger only appear in $\mathcal{T}^{(4)}_{p+1}$.

\begin{lemma}\label{lem:thirderror}
  The following estimates holds:
  \begin{align}
  |\nabla^j\mathcal{S}_q|&\lesssim |\ell|_0^{-d-q-j}\log^{q-1}|\ell|_0 \quad \text{for all}\ 1\leq q \leq p, \label{eq:Sqestimate}\\
  |D^j\mathcal{T}^{(3)}_{p+1}|&\lesssim |\ell|_0^{-d-p-1-j}, \label{eq:T3error}\\
  |D^j\mathcal{T}^{(4)}_{p+1}|&\lesssim |\ell|_0^{-d-p-1-j}\log^{p}|\ell|_0,. \label{eq:T4error}
  \end{align}
  for all $j\in\N_0$.
\end{lemma}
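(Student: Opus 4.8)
The plan is to treat all three bounds with one template, since by design the index sets $\mathcal{I}_{q,k,\boldsymbol{j}}$ and the cut-off $q\geq p+1$ in $\mathcal{T}^{(4)}_{p+1}$ collect exactly those products whose \emph{algebraic} decay rate is prescribed; the genuine work lies in controlling the logarithmic exponents and in showing the Taylor remainders decay to arbitrary order. First I would record the two building blocks. The first is the product estimate already derived from \eqref{eq:utildedecay}, namely $\bigl|\nabla^{j_1}\tilde{u}_{i_1}\otimes\cdots\otimes\nabla^{j_k}\tilde{u}_{i_k}\bigr|\lesssim \abs{\ell}^{(2-d)k-\lvert\boldsymbol{j}\rvert_1-\lvert\boldsymbol{i}\rvert_1}\log^{\lvert\boldsymbol{i}\rvert_1}\abs{\ell}$. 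The second is the observation that applying $\nabla^j$, or $D^j$ via the mean value theorem since every field here is smooth away from the core, distributes by the Leibniz rule and costs exactly one algebraic order per derivative while leaving the logarithmic exponent unchanged, because the bounds in \eqref{eq:uCestimate}--\eqref{eq:uCMPestimate} carry a log power $\lvert\boldsymbol{i}\rvert_1$ that is independent of the number of derivatives taken. With these in hand each inequality reduces to an exponent count.

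For \eqref{eq:Sqestimate} I would argue termwise. In the nonlinear sum, for $\boldsymbol{i}\in\mathcal{I}_{q,k,\boldsymbol{j}}$ the defining relation gives $(2-d)k-\lvert\boldsymbol{j}\rvert_1-\lvert\boldsymbol{i}\rvert_1=-d-q$, so the algebraic rate is exactly $\abs{\ell}^{-d-q}$ and the log exponent is $\lvert\boldsymbol{i}\rvert_1$. The key step is that the constraint $\lvert\boldsymbol{j}\rvert_1\geq k+1$ forces $\lvert\boldsymbol{i}\rvert_1 = d+q+(2-d)k-\lvert\boldsymbol{j}\rvert_1\leq q-(d-1)(k-1)\leq q-1$, using $k\geq 2$ and $d\geq 2$. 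For the linear higher-order sum, the term $\mathbb{C}^{n}[\nabla^n u^{\rm C}_{2+q-n}]$ decays like $\abs{\ell}^{2-d-n-(2+q-n)}\log^{2+q-n}\abs{\ell}=\abs{\ell}^{-d-q}\log^{2+q-n}\abs{\ell}$, and $n\geq 4$ gives exponent $2+q-n\leq q-2\leq q-1$. Both contributions therefore obey $\abs{\ell}^{-d-q}\log^{q-1}\abs{\ell}$, and the $\nabla^j$ version follows from the derivative remark. As a by-product one sees every index satisfies $i_m<q$, confirming that $\mathcal{S}_q$ depends only on strictly lower-order correctors.

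For \eqref{eq:T3error} I would use that $\mathcal{T}^{(3)}_{p+1}$ is assembled from the finite-difference Taylor remainders $\mathcal{T}^{(3,1)}(u^{\rm C}_i)$ and $\mathcal{T}^{(3,k)}(\bar{u}_p)$, each of which contains at least one factor carrying the maximal derivative count, $\nabla^{2M_1+2}u^{\rm C}_i$ respectively $\nabla^{M_k+1}\tilde{u}_{i_m}$. Since the $u^{\rm C}_i$ and $\bar{u}_p$ are $C^\infty$ away from $B_{R_0}$, the truncation orders $M_1,M_k$ are free parameters (they bound only the smoothness of the \emph{continuum} fields, not $K$, which bounds $\tilde{K}$). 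Feeding such a term into the product estimate, the extra high-order derivative drives the algebraic exponent below $-d-q$ for $q$ as large as desired; choosing $M_1,M_k$ so that the exponent is $\leq-d-p-2$ yields a spare algebraic power that dominates every logarithm and gives the log-free bound $\abs{\ell}^{-d-p-1-j}$.

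Finally, for \eqref{eq:T4error}, $\mathcal{T}^{(4)}_{p+1}$ is the sum over $q\geq p+1$ of precisely the expressions of $\mathcal{S}_q$-type, so each $q$-block is bounded as in \eqref{eq:Sqestimate} by $\abs{\ell}^{-d-q}\log^{q-1}\abs{\ell}$; these are finite sums because $\boldsymbol{i}\in\{0,\dots,p\}^k$ and the $j_m$ are bounded by $M_k$, so only finitely many $q$ occur. The $q=p+1$ block produces the stated $\abs{\ell}^{-d-p-1}\log^{p}\abs{\ell}$, while for $q>p+1$ the additional algebraic decay $\abs{\ell}^{-(q-p-1)}$ absorbs the larger log power, so the whole sum is controlled by its leading block, and the $D^j$ version again follows from the derivative remark. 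I expect the main obstacle to be the bookkeeping in the second and fourth paragraphs: keeping the index sets $\mathcal{I}_{q,k,\boldsymbol{j}}$ aligned with the two separate contributions to $\mathcal{S}_q$, and in particular verifying the sharp logarithmic bound $\lvert\boldsymbol{i}\rvert_1\leq q-1$ uniformly. This single inequality is what makes the logarithmic exponents in Theorems~\ref{thm:pointdef} and~\ref{thm:screw} come out correctly, and it is the one place where $k\geq 2$, $d\geq 2$, and $\lvert\boldsymbol{j}\rvert_1\geq k+1$ must all be invoked at once.
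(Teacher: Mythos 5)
Your proposal is correct and follows essentially the same route as the paper's proof: the algebraic rates hold by construction of the index sets, the logarithmic exponent in \eqref{eq:Sqestimate} is controlled by the bound $\lvert\boldsymbol{i}\rvert_1 \leq q-(d-1)(k-1) \leq q-1$ (the paper phrases this as maximising at $\lvert\boldsymbol{j}\rvert_1=k+1$, $k=2$, which is equivalent), \eqref{eq:T3error} follows from the freedom to choose $M_1,M_k$ arbitrarily large since only the smooth continuum fields are Taylor-expanded, and \eqref{eq:T4error} reduces to the $\mathcal{S}_q$-type estimate summed over finitely many $q\geq p+1$ with the $q=p+1$ block dominant. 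Your write-up is in fact more explicit than the paper's rather terse argument, but there is no substantive difference in method.
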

\begin{proof}
The order of decay in \eqref{eq:Sqestimate} is clear by construction, we only need to check the number of logarithmic terms. Indeed, the number of logarithmic terms needed is the maximum of all possible $\sum_m i_m = \lvert \boldsymbol{i} \rvert_1$. For $k \geq 2$ and $n \geq k+1$ derivatives we have $\lvert \boldsymbol{i} \rvert_1 = d+ q + (2-d)k-\lvert \boldsymbol{j} \rvert_1$ which is maximal if $\lvert \boldsymbol{j} \rvert_1=k+1$ and $k=2$, therefore $\lvert \boldsymbol{i} \rvert_1 \leq 1-d+q \leq q-1$ for all terms with $k \geq 2$. For the linear terms we directly see that we get at most $q-1$ logarithmic terms from $u_{q-1}$.

\eqref{eq:T3error} is trivial as we can choose $M_k$ as large as we want.

And \eqref{eq:T4error} is just the same estimate as \eqref{eq:Sqestimate}, where now we look at various $q \geq p+1$. The dominant part being the estimate for $q= p+1$. Having discrete differences instead of gradients does not change anything.
\end{proof}

We note the separation between terms which derive from third- and higher-order derivatives of $\mathcal{E}$ for $k>1$ on the first line of \eqref{eq:Sqdefn}, and those which derive from $H$ on the second line. A distinction arises partly due to the functions involved, and because we retain the leading order approximation $H\approx c_{\rm vol} H^{\rm C}$ on the left-hand side of \eqref{eq:uCiPDE}.

For given $q$, it can be checked that under the constraints on the indices, the maximal index $i$ of any $\tilde{u}_i$ involved in one of the terms in the first sum is $i = q-(d-1)\leq q-1$, and in the second, when $i=q-1$. It follows that
\[
  \mathcal{S}_q = \mathcal{S}_q\big(u^{\rm C}_0,u^{\rm CMP}_0,\ldots,u^{\rm C}_{q-1},u^{\rm CMP}_{q-1}\big),
\]
as we assumed in \eqref{eq:uCiPDE}. Furthermore, as $\tilde{K}$ and the $M_k$ are chosen sufficiently large, the definition of $S_q$, $1 \leq q \leq p$, given here does not depend on the induction step $p$ and thus remains the same over the iteration. So given that we assume $H^{\rm C}[u^{\rm C}_q] = \mathcal{S}_q$, these terms are fixed iteratively. For $q=p$, we can now specify $u^{\rm C}_p$ to be a solution to
\[
  H^{\rm C}[u^{\rm C}_p] = \mathcal{S}_p\big(u^{\rm C}_0,u^{\rm CMP}_0,\ldots,u^{\rm C}_{p-1},u^{\rm CMP}_{p-1}\big),
\]
for $\ell$ sufficiently large, so that these terms cancel on both sides of the equation as well, and we generate a new term in our series expansion of $\bar{u}$. 

Indeed, as $\mathcal{S}_p \in C^\infty(\R^d \backslash B_R(0); \R^N)$ for some radius $R>0$ and satisfies
\[|\nabla^j\mathcal{S}_p|\lesssim |\ell|^{-d-p-j}\log^{p-1}|\ell|\]
there according to \eqref{eq:Sqestimate}, we can use Proposition \ref{prop:contestimates} to see that there exists a $u^{\rm C}_p \in C^\infty(\R^d \backslash B_{2R}(0); \R^N)$ that solves $H^{\rm C}[u^{\rm C}_p] = \mathcal{S}_p$ and satisfies the estimate
\[\lvert \nabla^j u^{\rm C}_p(\ell) \rvert \leq \tilde{C}_j \lvert \ell \rvert^{2-d-p-j} \log^{p} \lvert \ell \rvert\]
for $\lvert \ell \rvert$ large enough. $u^{\rm C}_p$ can of course be extended arbitrarily for small $\lvert \ell \rvert$.

In summary, we may use the definition of $\mathcal{S}_q$ and the remainder terms $\mathcal{T}_{p+1}^{(1)}$, $\mathcal{T}_{p+1}^{(2)}$, $\mathcal{T}_{p+1}^{(3)}$, and $\mathcal{T}_{p+1}^{(4)}$ to rewrite \eqref{eq:expansionstep3} as
\begin{align*}
\sum_{i=0}^p c_{\rm vol} H^{\rm C}[u_i^{\rm C}] + H[s_p] &= \sum_{i=1}^p  c_{\rm vol} \mathcal{S}_i(u_0^{\rm C}, u_0^{\rm CMP}, \dots, u_{i-1}^{\rm C}, u_{i-1}^{\rm CMP})\\
&\quad +\mathcal{T}_{p+1}^{(1)}+ \mathcal{T}_{p+1}^{(2)}+\mathcal{T}_{p+1}^{(3)} + \mathcal{T}_{p+1}^{(4)}.
\end{align*}
In view of the definition of $u^{\rm C}_i$ as a solution to \eqref{eq:uCiPDE}, it follows that this equation reduces to the remainder equation
\begin{equation}\label{eq:spdefn}
  H[s_p] = \mathcal{T}_{p+1}:=\mathcal{T}_{p+1}^{(1)}+ \mathcal{T}_{p+1}^{(2)}+\mathcal{T}_{p+1}^{(3)}+ \mathcal{T}_{p+1}^{(4)},
\end{equation}
which we study in the following section.

\paragraph{Series remainder estimate.}

Note that \eqref{eq:spdefn} is not a linear equation for $s_p$, as $s_p$ also appears on the right hand side in $\mathcal{T}_{p+1}^{(2)}$. In the leading order estimates in \cite{EOS2016} this was resolved through techniques coming from regularity theory which we rely on for $p=0$. For the higher orders, i.e. $p>0$, we can use the (suboptimal) estimates that are available from the previous order. That turns out to be sufficient as the appearance of $s_p$ in $\mathcal{T}_{p+1}^{(2)}$ only occurs in nonlinear terms which give additional decay, see equation \eqref{eq:errorprevordervonly} and Lemma~\ref{lem:seconderror}. 

Combining the error estimates provided by in Lemma~\ref{lem:firsterror}, Lemma~\ref{lem:seconderror} and Lemma~\ref{lem:thirderror}, we have
\[\lvert D^j \mathcal{T}_{p+1} \rvert \lesssim \abs{\ell}^{-d-p-1-j} \log^{p} \abs{\ell}\ \]
for all $j=0, \dots, J-2$. This means we can apply Theorem \ref{thm:structurewithmoments} with $p+1$ instead of $p$ to get
\begin{equation} \label{eq:proof:inductionremainder}
s_p  = \sum_{i=0}^{p} \sum_{k=1}^N b^{(i,k)} : D_{\rm \mathcal{S}}^i \mathcal{G}_k + r_{p+1} 
\end{equation}
and
\[ \lvert D^j r_{p+1} \rvert \lesssim \abs{\ell}^{1-d-p-j}  \log^{p+1} \abs{\ell}\]
for $j=1, \dots, J$, as desired.

\paragraph{Conclusion.} To conclude the induction step we still have to look at that the possible addition of lower order terms from  \eqref{eq:proof:inductionremainder} as they change the multipole terms from one iteration to the next. Instead of discussing under what conditions these terms vanish, we allow for them to be non-zero and consider the more detailed consequences. Indeed, this is fine as long as $u_0^{\rm MP}=0$ remains true as this is part of the results and as long as $u_i^{\rm CMP}$, $i=1,\dots,p-1$, remain unchanged which is important since they are part of the continuum equations \eqref{eq:uCiPDE}. The first just follows from the fact that $\mathcal{I}_0[v]=0$ due to the divergence form of $\mathcal{T}_{p+1}$ combined with sufficient decay. For the second part, let us use the continuum expansion of the multipole terms to write
\[s_p = \sum_{i=1}^{p} v_i^{\rm MP} + r_{p+1} =  \sum_{i=1}^{p} v_i^{\rm CMP} + \tilde{w}^{\rm MP}_{p+1} + r_{p+1}.\]
 We know that $v_i^{\rm CMP}$ is $(2-d-i)$-homogeneous. Combining the decay estimate \eqref{eq:errorprevordervonly} for $s_p$ with the decay estimates for $r_{p+1}$ and $\tilde{w}^{\rm MP}_{p+1}$, we find
\[ \Big\lvert D^j \sum_{i=1}^{p} v_i^{\rm CMP} \Big\rvert \lesssim \lvert \ell \rvert^{2-d-p-j} \log^p \lvert \ell \rvert\]
for $\lvert \ell \rvert$ large enough. This is only compatible with the homogeneity if $v_i^{\rm CMP}=0$ for $i=1,\dots,p-1$. That ends the induction step and thus the proof of Theorem \ref{thm:pointdef} and Theorem \ref{thm:screw}.

As a last step we see that for a point defect, for any $d$, we have $u_0^{\rm C}=0$. We then also find $u_1^{\rm C}=\dots = u_{d}^{\rm C}=0$ as $\mathcal{S}_{i}=0$ for $1 \leq i \leq d$. The first non-trivial terms arises when $u_1^{\rm CMP}$ appears in the nonlinearity, namely
\[\mathcal{S}_{d+1} = \frac{1}{2}\divo \Big(\nabla^3 W (0)\big[\nabla u_1^{\rm CMP}\big]^2\Big).\]
This proves Remark \ref{rem:pointdef}.

\bibliographystyle{myalpha} 

\bibliography{references}

\end{document}